\theoremstyle{theorem}
\newtheorem{thm}{Theorem}
\newtheorem{pro}{Proposition}
\newtheorem{cor}{Corollary}
\newtheorem{lem}{Lemma}
\newtheorem*{thm A}{Theorem A}
\newtheorem*{thm B}{Theorem B}
\newtheorem*{thm C}{Theorem C}
\newtheorem*{thm D}{Theorem D}
\theoremstyle{definition}
\newtheorem{defn}{Definition}
\theoremstyle{remark}
\newtheorem{rem}{Remark}
\newtheorem{exe}{Example}
\newenvironment{proofof}{{\noindent {\it Proof of }}}{\hfill \finpr \\ }
\newenvironment{pcs}{{\noindent {\bf Special cases. }}}{\hfill \\ }
\numberwithin{equation}{section}
\def\Supp{\mathop{\rm Supp}}
\def\finpr{\hfill \hbox{
\vrule height 1.453ex  width 0.093ex  depth 0ex \vrule height
1.5ex  width 1.3ex  depth -1.407ex\kern-0.1ex \vrule height
1.453ex  width 0.093ex  depth 0ex\kern-1.35ex \vrule height
0.093ex  width 1.3ex  depth 0ex}}
\def\cb{{\Bbb C}}
\def\rb{{\Bbb R}}
          \def\L{\Lambda}         \def\l{\lambda}
\def\nb{\mathbb N}       \def\ds{\displaystyle}  
           \let\L=\longrightarrow  
                  \let\l=\rightarrow
                 \def\ds{\displaystyle}
\def\1{1\!\rm l}
\title[ ]{$m$-potential theory and $m$-generalized Lelong numbers associated with $m$-positive supercurrents}
\author[ ]{Fredj Elkhadhra and Khalil Zahmoul}
\address{ University of Sousse\\ Higher School of Sciences and Technology of Hammam Sousse\\ MaPSFA (LR 11 ES 35)\\ 4011 Hammam Sousse\\ Tunisia.}
\email{fredj.elkhadhra@essths.u-sousse.tn, zahmoul.khalil@essths.u-sousse.tn}
\begin{document}
\hyphenpenalty=10000
\begin{abstract} In this study, we first define the local potential associated to a weakly positive closed supercurrent in analogy to the one investigated  by Ben Messaoud and El Mir in the complex setting. Next, we study the definition and the continuity of the $m$-superHessian operator for unbounded $m$-convex functions. As an application, we generalize our previous work on Demailly-Lelong numbers and several related results in the superformalism setting. Furthermore, strongly inspired by the complex Hessian theory, we introduce the Cegrell-type classes as well as a generalization of some $m$-potential results in the class of $m$-convex functions.
\end{abstract}
\maketitle
\section{Introduction} The pluripotential theory in several complex variables, especially the complex Monge-Amp\`ere operator, has generated considerable interest over the past five decades, thanks to the many applications on complex analysis, complex differential geometry and number theory. By letting ourselves inspired by techniques go back to Demailly \cite{3} and Ben Messaoud and El Mir \cite{12}, we extend profound results in pluripotential theory to the superformalism setting via the supercurrent theory originated by Lagerberg \cite{5} and Berndtsson \cite{11}. Our first main result is to associate to a given positive closed supercurrent $T$, a local potential $U$ satisfying many interesting properties similar to the complex setting. Next, we investigate the weak convergence of the superHessian operator relatively to a smooth regularisations of $T$ and $U$ with respect to local uniform converging sequences of convex functions. Comparing with the complex context, this is a clear difference. Namely, it is well known that the analogous weak continuity for the complex Monge-Amp\`ere operator requires a convergence monotonicity condition on the sequences of plurisubharmonic functions. Among other properties of $U$ and as in the paper of Ghiloufi, Zaway and Hawari \cite{0} in the complex setting, we describe a relation between the Lelong numbers of $T$ and of $U$. The second important aim of this paper is the study of the definition and the continuity of the $m$-superHessian operator associated to an $m$-positive closed supercurrent $T$, for some classes of unbounded $m$-convex functions as well as when we consider a regularization sequence of $T$. Recall that we consider the concepts of $m$-positivity of superforms and supercurrents introduced by Elkhadhra and Zahmoul \cite{18} analogously with the complex Hessian theory. As an immediate consequence we introduce the $m$-generalized Lelong number of $T$ relatively to a given $m$-convex function. It is worth mentioning that the results obtained here are the counterparts of corresponding one in the complex Hessian pluripotential theory and they had a crucial role in the development of complex geometry. We hope that our superformalism adaptation attracts some problems within the tropical geometry and the Riemannian geometry. Next, as a generalization of the work of Labutin \cite{26} and the work of Trudinger and Wang \cite{15}, we obtained new outcomes on the weighted $m$-Hessian capacity for $m$-convex functions on a Borel subset of $\rb^n$, that are real counterparts of recent results due to Nguyen \cite{25} on generalized $m$-capacity for $m$-subharmonic functions. Let us describe more precisely the content of the paper. Besides the introduction, this paper has four sections. In Section 2, we recall some basic notations and definitions necessarily for the rest of this paper. The third section is where we introduce and investigate a variant of important related properties of the local potential $U$ of a positive closed current $T$ in the superformalism setting. Also, we investigate the weak convergence of the sequence of the superHessian operators $(U_j\wedge dd^{\#}u_1^j\wedge...\wedge dd^{\#}u_k^j)_j$, where $U_j$ is a smooth regularization by convolution of $U$ and $(u_s^j)_j$ are sequences of locally uniformly converging convex functions. The same weak convergence for $T_j$ instead of $U_j$ was guaranted by means of the link between $T$ and its potential. Moreover, we establish a result concerning the relation between the Lelong number of $T$ and of $U$. Let $\Omega$ be an open bounded subset of $\rb^n$ and denote by $\beta=\frac{1}{2}dd^{\#}|x|^2$ the K\"ahler form on $\rb^n\times\rb^n$. Strongly motivated by the work of Demailly \cite{3} on the Monge-Amp\`ere operator, we study in Section 4 the definition and the continuity of the $m$-superHessian operator $T\wedge\beta^{n-m}\wedge dd^{\#}u_1\wedge...\wedge dd^{\#}u_k$ for monotone decreasing sequences of $m$-convex functions bounded near $\partial\Omega\cap\Supp T$. Here $T$ is a closed $m$-positive supercurrent of bidimension $(p,p)$ on $\Omega\times\rb^n$ such that $T\wedge\beta^{n-m}$ is positive and $\Supp T$ is the support of $T$.  As an application, we introduce the $m$-generalized Lelong number of $T$ relatively to an $m$-convex weight $\varphi$ as the measure of the asymptotic behaviour of the supercurrent $T\wedge\beta^{n-m}\wedge(dd^{\#}\varphi)^{m+p-n}$ near the $m$-polar set $\{\varphi=-\infty\}$. In analogy with the complex case, the remaining of Section 4 contains several topics in superformalism setting, including the counterpart of the famous Demailly comparison theorem and the transformation of the Lelong number by a direct image of a projection. In the last section, we introduce two classes of $m$-convex functions $\mathcal{E}_{m}^0(\Omega)$ and $\mathcal{F}_m(\Omega)$. These classes turn out to the Cegrell classes of plurisubharmonic functions which are a basic tool in the study of interesting problems in the complex setting. Namely, the domain of definition of the Monge-Amp\`ere operator. After establishing some elmentary properties of such classes in the superformalism setting and in analogy with Nguyen \cite{25}, we introduce and investigate the weighted relative extremal function $R_{m,u}(E)$ as a generalization of the one given by Labutin \cite{26}. We also define the weighted $m$-Hessian capacity $cap_{m,u}(E)$ for an $m$-convex weight $u$. Finally, we discover a link between $cap_{m,u}(E)$ and $R_{m,u}(E)$  for Borel compact subset $E$ as well as for $u\in\mathcal{F}_m(\Omega)$.
\section{Preliminaries} In this part, we recall definitions and basic properties of the superforms and supercurrents as introduced by Lagerberg \cite{5}, Berndtsson \cite{11} and  Elkhadhra and Zahmoul \cite{18}, that will be used throughout this paper. Let $V$ and $W$ to be two $n$-dimensional vector spaces over $\rb$, so that $x=(x_{1},...,x_{n})$ and $\xi=(\xi_{1},...,\xi_{n})$ are the corresponding coordinates, and let $J:V \rightarrow W$ be an isomorphism such that $J(x)=\xi$, and denote its inverse by $J(\xi)=-x$ if $x\in V$ is the element for which $J(x)=\xi$. By setting $ E=V \times W=\lbrace(x,\xi);\,x\in V,\,\xi\in W\rbrace$, we observe that the map $J$ can be extended over $E$ by setting $J(x,\xi)=(J(\xi),J(x))$, so that $J^2=-id$. For $0\leqslant p,q\leqslant n$, any smooth superform on $E$ of bidegree $(p,q)$ can be written as
$$\alpha=\sum_{K,L}\alpha_{KL}(x)dx_{K}\wedge d\xi_{L},$$
where $K=(k_{1},...,k_{p})$, $dx_{K}=dx_{k_{1}}\wedge ...\wedge dx_{k_{p}}$, $L=(l_{1},...,l_{q})$, $d\xi_{L}=d\xi_{l_{1}}\wedge ...\wedge d\xi_{l_{q}}$ and each coefficient $\alpha_{KL}(x)$ is smooth and depends only on $x$. We denote by ${\mathscr E}^{p,q}:={\mathscr E}^{p,q}(E)$ the set of smooth superforms on $E$ of bidegree $(p,q)$. In particular, if $p=q$ we say that $\alpha$ is symmetric if and only if $\alpha_{KL}=\alpha_{LK}$, $\forall K,L$. It is clear that $J^{\ast}(dx_{i})=d\xi_{i}$ and ${J}^{\ast}(d\xi_{i})=-dx_{i}$. In order to simplify the notation, we denote the operator $J^\ast$ by $J$, which can be extended on ${\mathscr E}^{p,q}$  by a map denoted again by $J :{\mathscr E}^{p,q}\rightarrow{\mathscr E}^{q,p}$ and defined by
$$J(\alpha)=J\left(\sum_{K,L} \alpha_{KL}(x)dx_{K}\wedge d\xi_{L}\right)=(-1)^q\sum_{K,L} \alpha_{KL}(x)d\xi_{K}\wedge dx_{L},\quad \forall\alpha\in{\mathscr E}^{p,q}.$$
In particular, if $\alpha\in{\mathscr E}^{p,p}$, then $\alpha$ is symmetric if and only if  $J(\alpha)=\alpha$. Now, let us consider the K\"ahler form on $\rb^n\times\rb^n$ to be $$\beta:=\sum_{i=1}^{n}dx_{i}\wedge d\xi_{i}\in{\mathscr E}^{1,1}.$$ A simple computation gives $\beta^{n}=n!dx_{1}\wedge d\xi_{1}\wedge...\wedge dx_{n}\wedge d\xi_{n}$. Next, we introduce three notions of positivity on ${\mathscr E}^{p,p}$. A superform $\varphi\in{\mathscr E}^{n,n}$ is said to be positive ($\varphi\geqslant 0$) if $\varphi=f\beta^{n}$, where $f$ is a positive function. Let $\varphi\in{\mathscr E}^{p,p}$ be  symmetric, we say that $\varphi$ is:
  \begin{enumerate}
 \rm\item  weakly positive if $\varphi\wedge \alpha_{1}\wedge J(\alpha_{1})\wedge ...\wedge \alpha_{n-p}\wedge J(\alpha_{n-p})\geqslant 0,\ \forall\alpha_{1},...,\alpha_{n-p}\in{\mathscr E}^{1,0}.$
 \rm\item  positive if $\varphi\wedge \sigma_{n-p}\alpha\wedge J(\alpha)\geqslant 0,\ \forall\alpha\in{\mathscr E}^{n-p,0};\ \sigma_{k}=(-1)^{\frac{k(k-1)}{2}}.$
 \rm\item  strongly positive if $\varphi=\sum_{s=1}^{N}\lambda_{s}\alpha_{1,s}\wedge J(\alpha_{1,s})\wedge...\wedge \alpha_{p,s}\wedge J(\alpha_{p,s});\ \lambda_{s}\geqslant 0,\  \alpha_{i,s}\in{\mathscr E}^{1,0}.$
  \end{enumerate}
For every $\alpha\in{\mathscr E}^{n,n}$, there exists a function $\alpha_0$ defined on $V$ such that  $$\alpha=\alpha_{0}dx_{1}\wedge d\xi_{1}\wedge...\wedge dx_{n}\wedge d\xi_{n}.$$  When an orientation on $V$ is chosen and $\alpha_0$ is integrable, the superintegral of $\alpha$ is defined by  $$\int_{E}\alpha=\int_{V}\alpha_{0}dx_{1}\wedge ...\wedge dx_{n}.$$
The operators $d$ and $d^{\#}$ are of type $(1,0)$ and $(0,1)$ respectively and acting on ${\mathscr E}^{p,q}$ by the following expressions $$d=\sum_{i=1}^{n}\partial_{x_{i}}dx_{i}\qquad {\rm and}\qquad d^{\#}=\sum_{j=1}^{n}\partial_{x_{j}}d\xi_{j}.$$ It is easy to see that $d^{2}=(d^{\#})^{2}=0$, $dd^{\#}=-d^{\#}d$ and $\beta=\frac{1}{2}dd^{\#}|x|^2$. Moreover, in this situation we can present a Stokes' formula as follow: Assume that $\Omega\subset V$ is an open bounded set with smooth boundary and let $\alpha\in{\mathscr E}^{n-1,n}$. Then, $$\int_{\Omega\times W}d\alpha=\int_{\partial\Omega\times W}\alpha.$$
Denote by ${\mathscr D}^{p,q}$ the set of smooth and compactly supported superforms of bidegree $(p,q)$ on $E$, whose topology can be defined by means of the inductive limit. We introduce the space ${\mathscr D}_{p,q}$ of supercurrents of  bidimension $(n-p,n-q)$ as the topological dual of ${\mathscr D}^{n-p,n-q}$. This means that a supercurrent $T$ of bidimension $(n-p,n-q)$ is nothing but a continuous linear map on ${\mathscr D}^{n-p,n-q}$. More precisely, $T$ is a superform of bidegree $(p,q)$ which has distributions coefficients depending only on $x$. That is   $$T=\sum_{| I|=p,| J|=q}T_{IJ}dx_{I}\wedge d\xi_{J},$$ where $T_{IJ}$ are distributions defined uniquely on $x$. In particular, as with superforms if $p=q$ we say that the supercurrent $T$ is symmetric if and only if $T_{IJ}=T_{JI}$  $\forall I,J$.  For any $\alpha\in {\mathscr D}^{n-p,n-q}$, denote by $\langle T,\alpha\rangle$ the action of $T$ on $\alpha$. A supercurrent $T$ is said to be closed if $dT=0$.
Assume that $T$ is symmetric and of bidegree $(p,p)$. $T$ is said to be:
\begin{enumerate}
\item weakly positive if $\langle T,\alpha\rangle\geqslant 0$ for any $\alpha\in {\mathscr D}^{n-p,n-p}$ strongly positive.
\item positive if $\langle T,\sigma_{n-p}\alpha\wedge J(\alpha)\rangle\geqslant 0$ for any $\alpha\in {\mathscr D}^{n-p,0}$.
\item strongly positive if  $\langle T,\alpha\rangle\geqslant 0$ for any $\alpha\in {\mathscr D}^{n-p,n-p}$ weakly positive.
\item convex if  $dd^{\#}T$ is weakly positive and concave if $-T$ is convex.
\end{enumerate}
For $K\Subset\rb^n$ and $T$ a supercurrent of order zero, we define the mass measure of $T$ on $K$ by $$\|T\|_K=\sum_{IJ}|T_{IJ}|(K),$$ where $T_{IJ}$ are the coefficients of $T$. Due to Lagerberg \cite{5}, the mass of a positive supercurrent $T$ of bidimension $(p,p)$ on $K$ is proportional to the positive measure $T\wedge\beta^p(K)$. Now, we adapt to the superformalism context the following notions of $m$-positivity and $m$-convexity from Elkhadhra and Zahmoul \cite{18} in the complex hessian setting:
\begin{enumerate}
\item A symmetric superform $\alpha$ of bidegree $(1,1)$ is said $m$-positive if at every point we have
$\alpha^j\wedge\beta^{n-j}\geqslant 0,\ \forall j=1,...,m$.
\item A symmetric supercurrent $T$ of bidimension $(n-p,n-p)$ such that $p\leqslant m\leqslant n$, is called $m$-positive if we have $T\wedge\beta^{n-m}\wedge\alpha_1\wedge...\wedge\alpha_{m-p}\geqslant 0,$ for all $m$-positive superforms $\alpha_1,...,\alpha_{m-p}$ of bidegree $(1,1)$.
\item A function $u: V\L\rb\cup\{-\infty\}$ is called $m$-convex if it is subharmonic and the supercurrent $dd^{\#}u$ is $m$-positive. Denote by ${\mathscr C}_m$ the set of $m$-convex functions.
\end{enumerate}
Next, let us recall some basic facts about $m$-convex functions due to Wan and Wang \cite{17}:
\begin{enumerate}
\item If $u$ is of class ${\mathscr C}^2$ then $u$ is $m$-convex if and only if $dd^{\#}u$ is $m$-positive superform.
\item convex functions=${\mathscr C}_n\subset{\mathscr C}_{n-1}\subset\cdots\subset{\mathscr C}_1$= subharmonic functions.
\item If $u$ is $m$-convex then the standard regularization $u_j=u\ast\chi_j$ is smooth and $m$-convex. Moreover, $(u_j)_j$ decreases pointwise to $u$.
\item Let $u,v\in{\mathscr C}_m$ then $\max(u,v)\in{\mathscr C}_m$.
\item If $(u_\alpha)_\alpha\subset{\mathscr C}_m$, $u=\sup_\alpha u_\alpha<+\infty$ and $u$ is upper semi-continuous then $u$ is $m$-convex.
\end{enumerate}
For the sake of simplicity, in the rest of this paper we consider two copies of $\rb^n$, i.e. $V=W=\rb^n$ and we say form instead of superform and current instead of supercurrent.
\section{Local potential associated with positive closed supercurrent}
\subsection{Definition and properties} In complex analysis, a possible way of studying interesting properties of a given current is to consider the associated local potential. Namely, it was stressed by Ben Messaoud and El Mir \cite{12} that the local potential associated to a positive closed current in $\cb^n$ is crucial in the study of the complex Monge-Amp\`ere operator. By a strong analogy with the complex theory, we try to extend this notion to the superformalism setting as follows:
\begin{defn}\label{d1} Assume that $T$ is a weakly positive closed current of measure coefficients and of bidimension $(p,p)$ on $\rb^n\times\rb^n$ with $1\leqslant p\leqslant n-1$. Let $\Omega\Subset\rb^n$ and $\eta$ be a smooth compactly supported function on $\rb^n$ such that $0\leqslant\eta\leqslant1$ and $\eta\equiv1$ on $\overline{\Omega}$. The local  potential $U=U(\eta,T)=U(\Omega,T)$ associated to $T$ is a weakly negative current of bidimension $(p+1,p+1)$ on $\rb^n\times\rb^n$ defined by
$$U(x,\xi)=-c_n\int_{(y,\zeta)\in\rb^n\times\rb^n}\eta(y)T(y,\zeta)\wedge\frac{\beta^{n-1}(x-y,\xi-\zeta)}{|x-y|^{n-2}},$$
where $c_n=\frac{1}{(n-2)Vol_n(B(0,1))}$.
\end{defn}
Analogously to the complex setting, if $h(x)=-c_n|x|^{2-n}$, it is important to observe that the coefficients of $U$ are nothing but the convolutions of the measure coefficients of $\eta T$ by the function $h$ which is in $L_{loc}^{1+\frac{1}{n}}$. Therefore, the coefficients of $U$ are in $L_{loc}^{1+\frac{1}{n}}$. Moreover, we can easily deduce the positivity of the local potential by using the equality (\ref{0}) in the proof of the following Lemma \ref{lmm1} and the fact that the push forward of a (weakly) positive current is (weakly) positive thanks to Lagerberg \cite{5}.
\begin{exe}\label{x1}\
\begin{enumerate}
\item Assume that $T$ is a weakly positive closed current of bidimension $(n-1,n-1)$. Thanks to Lagerberg \cite{5}, there exists a convex function $u$ such that $T=dd^{\#}u$. A direct computation gives
$$\begin{array}{lcl}
U(x,\xi)&=&-c_n\ds\int_{(y,\zeta)\in\rb^n\times\rb^n}\eta(y)dd^{\#}u(y,\zeta)\wedge\frac{\beta^{n-1}(y,\zeta)}{|x-y|^{n-2}}\\&=&-c_n\ds\int_{(y,\zeta)\in\rb^n\times\rb^n}\frac{\eta(y)\Delta u(y)\beta^n(y,\zeta)}{|x-y|^{n-2}}.
\end{array}$$
It follows that for $x\in\Omega$, we have $U(x,\xi)=(\eta\Delta u)\ast h(x)=\Delta(\eta u)\ast h+v(x),$ where $v$ is smooth because it is a product of convolution of $h$ with measures involving derivations of $\eta$ which vanish on $\Omega$. Since $\Delta h=\delta_0$, we get $U(x,\xi)=\eta(x)u(x)+ v(x)=u(x)+v(x)$ on $\Omega$. \\
\item Let $M$ be a minimal, smooth and $p$-dimensional submanifold of $\rb^n$, and let $[M]_s$ be the associated minimal current as introduced by Berndtsson \cite{11}. This means that $[M]_s\wedge\beta^{p-1}$ is a weakly positive closed current of bidimension $(1,1)$ on $\rb^n\times\rb^n$. The local potential associated to the current $[M]_s\wedge\beta^{p-1}$ is
$$\begin{array}{lcl}
U(x,\xi)&=&-c_n\ds\int_{(y,\zeta)\in\rb^n\times\rb^n}\eta(y)[M]_s\wedge\beta^{p-1}(y,\zeta)\wedge\frac{\beta^{n-1}(x-y,\xi-\zeta)}{|x-y|^{n-2}}\\&=&\ds-p!(n-1)c_n\left(\int_{(y,\zeta)\in\rb^n\times\rb^n}\frac{\eta(y)}{|x-y|^{n-2}}\sigma_M\right)\beta^{n-2}(x,\xi)\\&-&\ds(n-1)(n-2)c_n\int_{(y,\zeta)\in\rb^n\times\rb^n}\eta(y)[M]_s\wedge\beta^{p-1}(y,\zeta)\wedge\beta(x,\zeta)\wedge\beta(y,\xi)\wedge\frac{\beta^{n-3}(x,\xi)}{|x-y|^{n-2}},
\end{array}$$
where $\sigma_M:=[M]_s\wedge\frac{\beta^{p}}{p!}$ is the volume form defined by Berndtsson \cite{11}.
\end{enumerate}
\end{exe}
Let $\chi$ be a radial smooth function and with support included in the unit ball such that $\int\chi(x)dx=1$, and let $\chi_{j}(x)=j^n\chi(jx)$ for $j\in\nb^\ast$. Let us set
$$U_j(x,\xi)=\int_{(y,\zeta)\in\rb^n\times\rb^n}\eta(y)h\ast\chi_j(x-y)T(y,\zeta)\wedge\beta^{n-1}(x-y,\xi-\zeta).$$
Since $h$ is subharmonic on $\rb^n$, the sequence $(h\ast\chi_j)_j$ is smooth and monotone decreasing. It follows that $(U_j)_j$ is a sequence of forms of bidegree $(n-p-1,n-p-1)$ on $\rb^n\times\rb^n$ which decreases weakly to $U$. Assume that
$$U(x,\xi)=\sum_{|I|=|J|=n-p-1}U_{IJ}(x)dx_J\wedge d\xi_J$$
is the canonical expression of $U$. Then, it is not hard to see that $$U_{II}(x)=U\wedge(-1)^{(p+1)(n-p-1)}dx_{I^c}\wedge d\xi_{I^c},\ \mathrm{where}\ I^c=\{1,...,n\}\smallsetminus I.$$ Hence, in view of the above observation, $U_{II}$ is a subharmonic function on $\rb^n$, since it coincides with the convolution of a positive measure by the function $h$. Moreover, we have
\begin{pro}\label{pro1} Let $u=\sum_{|I|=n-p-1}U_{II}$. Then, for all $x\in\rb^n$,
$$u(x)=\frac{(n-p)(n-1)!}{p!}\int_{\rb^n\times\rb^n}\eta(y)h(x-y)T(y,\zeta)\wedge\beta^p(y,\zeta).$$
\end{pro}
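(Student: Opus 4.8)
The strategy is to compute the sum $u=\sum_{|I|=n-p-1}U_{II}$ directly from the integral definition of $U$ by first understanding the purely algebraic identity $\sum_{|I|=n-p-1}(-1)^{(p+1)(n-p-1)}dx_{I^c}\wedge d\xi_{I^c}$ contracted against $\beta^{n-1}$. The point is that $U_{II}(x)$ is the coefficient of $U$ extracted by wedging with $(-1)^{(p+1)(n-p-1)}dx_{I^c}\wedge d\xi_{I^c}$ and taking the scalar part relative to $\beta^n=n!\,dx_1\wedge d\xi_1\wedge\cdots\wedge dx_n\wedge d\xi_n$. So first I would write
$$u(x)=-c_n\int_{\rb^n\times\rb^n}\frac{\eta(y)}{|x-y|^{n-2}}\Bigl(T(y,\zeta)\wedge\beta^{n-1}(x-y,\xi-\zeta)\wedge\sum_{|I|=n-p-1}(-1)^{(p+1)(n-p-1)}dx_{I^c}\wedge d\xi_{I^c}\Bigr),$$
the wedge being read as the scalar obtained by dividing by $dx_1\wedge d\xi_1\wedge\cdots\wedge dx_n\wedge d\xi_n$. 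Since $T$ has bidimension $(p,p)$, i.e. bidegree $(n-p,n-p)$, and $\beta^{n-1}(x-y,\xi-\zeta)$ has bidegree $(n-1,n-1)$, for the total bidegree to reach $(n,n)$ the extra factor $dx_{I^c}\wedge d\xi_{I^c}$ of bidegree $(p+1,p+1)$ contributes exactly the missing slots; the key claim is the algebraic identity
$$T\wedge\beta^{n-1}(x-y,\xi-\zeta)\wedge\sum_{|I|=n-p-1}(-1)^{(p+1)(n-p-1)}dx_{I^c}\wedge d\xi_{I^c}=\frac{(n-1)!}{p!}\,(n-p)\,T\wedge\beta^p(y,\zeta)$$
(as $(n,n)$-forms, both sides divided by $\beta^n/n!$), after which multiplying by $-c_n$ and recalling $h(x-y)=-c_n|x-y|^{2-n}$ gives precisely the stated formula.

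To establish this algebraic identity I would argue pointwise and reduce to monomials. Fix a multi-index and write $T$ locally as a single term $T_{AB}\,dx_A\wedge d\xi_B$ with $|A|=|B|=n-p$; by symmetry of $T$ one may restrict attention (after summation) to $A=B$, or simply carry both indices through. Expanding $\beta^{n-1}$ as a sum over $(n-1)$-subsets, the wedge $dx_A\wedge d\xi_B\wedge(\text{subset of }\beta^{n-1})\wedge dx_{I^c}\wedge d\xi_{I^c}$ is nonzero only when the index sets partition $\{1,\dots,n\}$ correctly, and the sign bookkeeping is a finite permutation computation. The cleanest route is to observe that $\sum_{|I|=n-p-1}(-1)^{(p+1)(n-p-1)}dx_{I^c}\wedge d\xi_{I^c}$ is, up to the combinatorial constant $\binom{p+1}{?}$-type factor, proportional to the action of "wedging then contracting against $\beta^{p+1}$"; more concretely, for any $(n-p,n-p)$-form $R$ one has $R\wedge\sum_{|I|=n-p-1}\pm dx_{I^c}\wedge d\xi_{I^c}=\lambda_{n,p}\,R\wedge\beta^{p+1}$ for a universal constant $\lambda_{n,p}$ obtained by testing on $R=\beta^{n-p-1}\wedge(dx_1\wedge d\xi_1)$ or similar, and then $\beta^{n-1}(x-y,\cdot)\wedge\beta^{p+1}$ contributes the remaining count. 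Combining the two reductions, $T\wedge\beta^{n-1}(x-y,\xi-\zeta)\wedge\beta^{p+1}(x,\xi)=c\cdot T\wedge\beta^p(y,\zeta)$ because only the $\beta(y,\zeta)$-components of $\beta(x-y,\xi-\zeta)$ survive against the bidegree constraints of $T$ once $p+1$ copies of the "free" $\beta$ are already present — the cross terms $dx_i\wedge d\zeta_j$, etc., vanish after wedging with enough copies of $\beta$.

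The main obstacle is exactly this combinatorial/sign computation: keeping track of the factor $(n-p)(n-1)!/p!$ requires counting, for each fixed placement of the indices carried by $T$, how many subsets of the $\beta^{n-1}$ expansion and how many terms $I$ in the outer sum contribute, together with the Koszul signs $\sigma_k$ and the parity $(-1)^{(p+1)(n-p-1)}$ built into the definition of $U_{II}$. I would handle this by first checking the simplest case $p=0$ (where $T$ is a function times $\beta^n$ up to scalars and the identity is just a normalization of $c_n$ against $h$), then $p=1$ using Example \ref{x1}(1) as a consistency check, and finally doing the general count by induction on $p$, peeling off one factor of $\beta$ at a time exactly as in Lemma \ref{lmmm2}-style integration-by-parts-free manipulations. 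Once the constant is pinned down in one nondegenerate example, the identity of $(n,n)$-forms is forced, and the Proposition follows by integrating against the kernel $\eta(y)|x-y|^{2-n}$ — no convergence issues arise since the coefficients of $U$ are already known to be in $L^{1+1/n}_{loc}$, so each $U_{II}$ is a genuine locally integrable function and the sum is taken pointwise almost everywhere.
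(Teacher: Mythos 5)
Your overall route coincides with the paper's: extract $u$ by wedging $U$ with (a multiple of) $\beta^{p+1}(x,\xi)$ (your sum $\sum_{|I|=n-p-1}\pm dx_{I^c}\wedge d\xi_{I^c}$ is indeed proportional to $\beta^{p+1}$, and the paper uses exactly the relation $u\,\beta^n=\frac{n!}{(p+1)!}\,U\wedge\beta^{p+1}$), observe that the result must be a universal constant times $\int\eta(y)h(x-y)\,T\wedge\beta^{p}(y,\zeta)$, and then pin down that constant. The genuine gap is in your justification of the key algebraic identity: the claim that ``the cross terms $dx_i\wedge d\zeta_j$, etc., vanish after wedging with enough copies of $\beta$'' is false. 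Writing $\beta(x-y,\xi-\zeta)=\beta(x,\xi)-\beta(x,\zeta)-\beta(y,\xi)+\beta(y,\zeta)$, the component of $\beta^{n-1}(x-y,\xi-\zeta)$ that matters is
$$\sum_{s=0}^{N}\frac{(n-1)!}{(n-p-1-s)!(p-s)!(s!)^2}\,\beta^{p-s}(y,\zeta)\wedge\bigl(\beta(x,\zeta)\wedge\beta(y,\xi)\bigr)^{s}\wedge\beta^{n-p-1-s}(x,\xi),$$
and after wedging with $\beta^{p+1}(x,\xi)$ the mixed terms with $s\geqslant1$ do \emph{not} die: only their off-diagonal pieces ($I\neq J$) vanish, while the diagonal pieces survive and contribute with the alternating sign $(-1)^s$; these contributions are essential to the constant. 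Concretely, for $n=3$, $p=1$ the $s=0$ term alone would give the overall factor $6$, whereas the correct factor is $\frac{(n-p)(n-1)!}{p!}=4$; the discrepancy is exactly the $s=1$ cross term. Hence a ``general count'' guided by your vanishing claim (and the induction on $p$ built on it) produces the wrong constant, and this is precisely the step where the whole work of the proposition lies.

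Your fallback --- pin the constant by evaluating on one nondegenerate example --- is in fact the paper's device, but as proposed it does not close the gap: the constant depends on both $n$ and $p$, so checking $p=0$ (which is outside the range $1\leqslant p\leqslant n-1$) and ``$p=1$ via Example \ref{x1}(1)'' (that example concerns bidegree $(1,1)$, i.e.\ bidimension $(n-1,n-1)$, not $p=1$) forces nothing for general $(n,p)$. The paper resolves this by noting that the derivation of $U\wedge\beta^{p+1}=c(N)\bigl(\int\eta\,h(x-\cdot)\,T\wedge\beta^{p}\bigr)\beta^{n}$ is valid for any current of order zero, and then evaluating both sides on the test current $T=\delta_0\,\beta^{n-p}$, for which $U\wedge\beta^{p+1}$ can be recomputed directly after the change of variables $(y,\zeta)\mapsto(x-y,\xi-\zeta)$; comparison gives $c(N)=\frac{(n-p)(p+1)}{n}$ and hence the factor $\frac{(n-p)(n-1)!}{p!}$. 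Either adopt such a test-current evaluation valid for all $(n,p)$, or carry out honestly the alternating sum over $s$ (including the cross terms); as written, your argument does not establish the stated constant.
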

\begin{proof} Since the desired equality is made up of two subharmonic functions, it suffices to show it almost everywhere. Clearly we have $\beta(x-y,\xi-\zeta)=\beta(x,\xi)-\beta(x,\zeta)-\beta(y,\xi)+\beta(y,\zeta)$. By the expression of the local potential, observe that the form $\beta^{n-1}(x-y,\xi-\zeta)$ contribute only by its $(p,p)$-component in $(y,\zeta)$ and $(n-p-1,n-p-1)$-component in $(x,\xi)$, which is equals to
$$\sum_{s=0}^{N}\frac{(n-1)!}{(n-p-1-s)!(p-s)!(s!)^2}\beta^{p-s}(y,\zeta)\wedge(\beta(x,\zeta)\wedge\beta(y,\xi))^s\wedge\beta^{n-p-1-s}(x,\xi),$$
for $N=\min(p,n-p-1)$. Since
$$\begin{array}{lcl}
(\beta(x,\zeta)\wedge\beta(y,\xi))^s&=&\ds\left(\left(\sum_{i=1}^{n}dx_i\wedge d\zeta_i\right)\wedge\left(\sum_{j=1}^{n}dy_j\wedge d\xi_j\right)\right)^s\\&=&\ds\left(\sum_{i,j=1}^{n}dx_i\wedge d\zeta_i\wedge dy_j\wedge d\xi_j\right)^s\\&=&{(-1)^s}(s!)^2\ds\sum_{|I|=|J|=s}dx_I\wedge d\xi_J\wedge dy_J\wedge d\zeta_I.
\end{array}$$
Then,
\begin{equation}\label{e10}
U(x,\xi)=\sum_{s=0}^{N}\frac{(n-1)!{(-1)^s} }{(n-p-1-s)!(p-s)!}\left(\sum_{|I|=|J|=s}B_{IJ}(x)\beta^{n-p-1-s}(x,\xi)\wedge dx_I\wedge d\xi_J\right),
\end{equation}
where $B_{IJ}(x)=\int_{\rb^n\times\rb^n}\eta(y)h(x-y)T(y,\zeta)\wedge\beta^{p-s}(y,\zeta)\wedge dy_J\wedge d\zeta_I$. Consequently, we get
$$U\wedge\beta^{p+1}(x,\xi)=\sum_{s=0}^{N}\frac{(n-1)!{(-1)^s}}{(n-p-1-s)!(p-s)!}\left(\sum_{|I|=s}B_{II}(x)\beta^{n-s}(x,\xi)\wedge dx_I\wedge d\xi_I\right).$$
Since $\beta^{n-s}(x,\xi)\wedge dx_I\wedge d\xi_I=\frac{(n-s)!{\sigma_s}}{n!}\beta^{n}(x,\xi)$ and ${\sigma_s}\sum_{|I|=s}dy_I\wedge d\zeta_I=\frac{1}{s!}\beta^{s}(y,\zeta)$, then
$$U\wedge\beta^{p+1}(x,\xi)=\sum_{s=0}^{N}\frac{(n-1)!(n-s)!{(-1)^s}}{(n-p-1-s)!(p-s)!s!n!}\left(\int_{\rb^n\times\rb^n}\eta(y)h(x-y)T(y,\zeta)\wedge\beta^{p}(y,\zeta)\right)\beta^{n}(x,\xi).$$
To calculate $c(N)=\sum_{s=0}^{N}\frac{(n-1)!(n-s)!{(-1)^s}}{(n-p-1-s)!(p-s)!s!n!}$ and since the above argument is still holds when $T$ is only of order zero, let us take $T=\delta_0\beta^{n-p}$. According to the previous equality, one has  $U\wedge\beta^{p+1}(x,\xi)=n!c(N)\eta(0)h(x)\beta^{n}(x,\xi)$. Moreover, by the integral expression of $U$ and by changing variables $((y,\zeta),(x,\xi))\rightarrow((t,\mu),(x,\xi))=((x-y,\xi-\zeta),(x,\xi))$, we obtain
$$U(x,\xi)=\int_{\rb^n\times\rb^n}\eta(x-t)h(t)\delta_x(t)\beta^{n-p}(x-t,\xi-\mu)\wedge\beta^{n-1}(t,\mu).$$
We need only the $(1,1)$-component in $(t,\mu)$ and the $(n-p-1,n-p-1)$-component in $(x,\xi)$ of the form $\beta^{n-p}(x-t,\xi-\mu)$, which is equals to
$$w(x,\xi,t,\mu)=(n-p)\beta(t,\mu)\wedge\beta^{n-p-1}(x,\xi)-(n-p-1)(n-p)\beta^{n-p-2}(x,\xi)\wedge\sum_{i,j=1}^{n}dx_i\wedge d\xi_j\wedge dt_j\wedge d\mu_i.$$
Hence,
$$\begin{array}{lcl}
& &w(x,\xi,t,\mu)\wedge\beta^{n-1}(t,\mu)=\\&=&\ds(n-p)\beta^n(t,\mu)\wedge\beta^{n-p-1}(x,\xi)-\frac{(n-p)(n-p-1)}{n}\beta^n(t,\mu)\wedge\beta^{n-p-1}(x,\xi)\\&=&\ds\frac{(n-p)(p+1)}{n}\beta^{n-p-1}(x,\xi)\wedge\beta^n(t,\mu)
\end{array}$$
and $U\wedge\beta^{p+1}(x,\xi)=\frac{(n-p)(p+1)}{n}n!\eta(0)h(x)\beta^{n}(x,\xi)$. It follows that $c(N)=\frac{(n-p)(p+1)}{n}$. Then, by going back to the canonical expression of $U$, we deduce that
$$\begin{array}{lcl}
u(x)\beta^n(x,\xi)&=&\ds\frac{n!}{(p+1)!}U(x,\xi)\wedge\beta^{p+1}(x,\xi)\\&=&\ds\frac{(n-p)(n-1)!}{p!}\left(\int_{\rb^n\times\rb^n}\eta(y)h(x-y)T(y,\zeta)\wedge\beta^p(y,\zeta)\right)\beta^n(x,\xi).
\end{array}$$
\end{proof}
\begin{pro}\label{ppt} Let $T$ be a positive closed current of bidimension $(p,p)$ on $\rb^n\times\rb^n$. If $x_0\in\rb^n$ and $K$ is a compact of $\rb^n$, then:\
\begin{enumerate}
\item There exists a constant $c=c(\eta,x_0,n,p)>0$ such that $\|\eta T\|_{\rb^n}\leqslant-cu(x_0)$.
\item There exists a constant $c_K=c(K,n,p)\geqslant0$ such that $\|U\|_K\leqslant c_K\|\eta T\|_{\rb^n}$.
\end{enumerate}
\end{pro}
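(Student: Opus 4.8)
The plan is to exploit the explicit formula from Proposition \ref{pro1}, namely
$u(x)=\frac{(n-p)(n-1)!}{p!}\int_{\rb^n\times\rb^n}\eta(y)h(x-y)T(y,\zeta)\wedge\beta^p(y,\zeta)$,
together with the fact that $h(x)=-c_n|x|^{2-n}\leqslant 0$ and that $T\wedge\beta^p$ is (proportional to) the mass measure of the positive current $T$. For part (1), I would fix $x_0$ and restrict the integral to the compact set $L=\Supp(\eta)$; since $-h$ is continuous and strictly positive away from the diagonal, and bounded below by a strictly positive constant on $\{|x_0-y|\leqslant R\}$ where $R=\mathrm{diam}(L)+\mathrm{dist}(x_0,L)+1$, we get $-h(x_0-y)\geqslant m>0$ for every $y\in L$. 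Hence
$-u(x_0)=\frac{(n-p)(n-1)!}{p!}\int_{L\times\rb^n}\eta(y)(-h(x_0-y))\,T(y,\zeta)\wedge\beta^p(y,\zeta)\geqslant \frac{(n-p)(n-1)!}{p!}\,m\,\|\eta T\cdot\beta^p\|$,
and since the mass measure $\|\eta T\|_{\rb^n}$ is comparable (by Lagerberg \cite{5}, as recalled in the Preliminaries) to $\eta T\wedge\beta^p(\rb^n)$ up to a dimensional constant, this gives the claimed inequality with $c=c(\eta,x_0,n,p)$. The only subtlety is that one must check the integrand is genuinely nonnegative, i.e. that $T\wedge\beta^p$ is a positive measure: this is immediate since $T$ is positive of bidimension $(p,p)$ and $\beta^p$ is a strongly positive $(p,p)$-form.

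For part (2), I would return to the canonical expression
$U(x,\xi)=\sum_{|I|=|J|=n-p-1}U_{IJ}(x)\,dx_I\wedge d\xi_J$
and estimate $\|U\|_K=\sum_{I,J}|U_{IJ}|(K)=\sum_{I,J}\int_K|U_{IJ}(x)|\,dx$. From the defining integral, each coefficient $U_{IJ}(x)$ is a finite linear combination (with coefficients depending only on $n,p$) of terms of the form $\int \eta(y)(-h(x-y))\,d\mu_{I'J'}(y)$, where the $\mu_{I'J'}$ are the (signed, but of bounded variation) measure coefficients of $T\wedge\beta^{p-s}\wedge dy_{J'}\wedge d\zeta_{I'}$; each such $|\mu_{I'J'}|$ is dominated by $C_{n,p}\,\|T\wedge\beta^p\|$ because the entries of $\beta^{p-s}$ and the extra factors $dy\wedge d\zeta$ only redistribute mass with dimensional constants. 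Therefore $|U_{IJ}(x)|\leqslant C_{n,p}\int \eta(y)(-h(x-y))\,d\|T\wedge\beta^p\|(y)$, and integrating in $x$ over $K$ and applying Fubini,
$\int_K|U_{IJ}(x)|\,dx\leqslant C_{n,p}\int_{\rb^n}\Big(\int_K -h(x-y)\,dx\Big)\eta(y)\,d\|T\wedge\beta^p\|(y)$.
The inner integral $\int_K -h(x-y)\,dx=c_n\int_K|x-y|^{2-n}\,dx$ is bounded uniformly in $y$ by a constant $c'_K$ depending only on $K$ and $n$, since $|x|^{2-n}\in L^1_{loc}(\rb^n)$ (this is the same observation made right after Definition \ref{d1}). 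Summing over the finitely many pairs $(I,J)$ yields $\|U\|_K\leqslant c_K\|\eta T\wedge\beta^p\|_{\rb^n}\leqslant c_K'\|\eta T\|_{\rb^n}$, as desired.

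The main obstacle, and the part requiring the most care, is the bookkeeping in part (2): one must verify that every coefficient measure appearing after expanding $\beta^{n-1}(x-y,\xi-\zeta)$ and extracting the $(n-p-1,n-p-1)$-component in $(x,\xi)$ is controlled by the single scalar measure $\|T\wedge\beta^p\|$ with a constant depending only on $n$ and $p$ — this is exactly the kind of computation already carried out in the proof of Proposition \ref{pro1}, so I would lean on equation (\ref{e10}) there rather than redo it from scratch. Once that domination is in hand, both parts reduce to the local integrability of $h$ and the positivity of $T\wedge\beta^p$, neither of which presents any difficulty.
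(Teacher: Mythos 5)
Your part (1) is essentially the paper's argument: bound $-h(x_0-\cdot)$ below by a positive constant on the compact $\Supp\eta$ (the paper phrases this as the lower semi-continuous function $g(x)=-h(x_0-x)$ attaining a positive minimum there), invoke Proposition \ref{pro1}, and convert the trace integral $\int\eta\, T\wedge\beta^p$ into the mass $\|\eta T\|_{\rb^n}$ via Lagerberg's Proposition 4.1. For part (2) you take a genuinely different, more computational route: you estimate the coefficient measures of $U$ directly from the expansion (\ref{e10}), dominate them by the convolution of $-h$ with the mass measure of $\eta T$, and conclude by Fubini and the uniform bound on $\int_K|x-y|^{2-n}\,dx$. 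The paper instead exploits the structural fact that $U$ is a weakly negative current of bidimension $(p+1,p+1)$: Lagerberg's Proposition 4.1 applied to $-U$ gives $\|U\|_K\leqslant-c\int_{K\times\rb^n}U\wedge\beta^{p+1}$, and then the computation of $U\wedge\beta^{p+1}$ from Proposition \ref{pro1} plus Fubini (with the spherical-average bound $\int_{\mathbb S}|x-y|^{2-n}d\sigma\leqslant\min(1,|y|^{2-n})$ playing the role of your local-integrability estimate) finishes the proof. Your approach avoids using the negativity of $U$ at the price of the coefficient bookkeeping; the paper's approach compresses that bookkeeping into a single application of the trace--mass inequality. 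One small imprecision on your side: your claim that each $|\mu_{I'J'}|$ is dominated by $C_{n,p}\|T\wedge\beta^p\|$ is not justified by ``redistribution of mass'' alone -- passing from the coefficient measures to the trace measure requires the positivity of $T$ via Lagerberg's Proposition 4.1; however, since your target is $\|\eta T\|_{\rb^n}$, it suffices (and is cleaner) to dominate by the mass measure of $T$ itself, so the argument goes through unchanged.
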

\begin{proof} (1) There is nothing to prove when $u(x_0)=-\infty$. The function $g:x\mapsto-h(x_0-x)$ is lower semi-continuous, then $g$ reaches its minimum on the compact subset $\Supp \eta$. By Proposition 4.1 in \cite{5}, there exists $c_1>0$ such that $$\|\eta T\|_{\rb^n}\leqslant c_1\int_{\rb^n\times\rb^n}\eta(y)T(y,\zeta)\wedge\beta^p(y,\zeta).$$ Then, by Proposition \ref{pro1}, it suffices to take $c=\frac{c_1p!}{(n-p)(n-1)!\ds\min_{x\in\Supp \eta}g(x)}$.

(2) Since $U$ is a negative current of bidimension $(p+1,p+1)$ on $\rb^n\times\rb^n$. Then, by Proposition 4.1 in \cite{5}, there exists a constant $c=c(n,p)\geqslant0$ such that $$\|U\|_K\leqslant-c\int_{K\times\rb^n}U(x,\xi)\wedge\beta^{p+1}(x,\xi).$$
By the integral formula of $U$ and Fubini's theorem, the previous inequality becomes $$\|U\|_K\leqslant \frac{c}{n!}\int_{(y,\zeta)\in\rb^n\times\rb^n}\eta(y)T(y,\zeta)\wedge\beta^{p}(y,\zeta)\left(\int_{x\in K}\frac{d\lambda}{|x-y|^{n-2}}\right),$$ where $d\lambda=dx_1\wedge...\wedge dx_n$. Let $r>0$ such that $K\subset B(0,r)$. The invariance by the orthogonal group $O(\rb^n)$ of the Lebesgue's measure $d\sigma=\sum_{i=1}^{n}(-1)^{i-1}dx_1\wedge...\wedge dx_{i-1}\wedge dx_{i+1}\wedge...\wedge dx_n$ on the unit sphere of $\rb^n$, gives $$\int_{S(0,1)}\frac{d\sigma(x)}{|x-y|^{n-2}}=\min\left(1,\frac{1}{|y|^{n-2}}\right)\leqslant1.$$
Which conclude our proof.
\end{proof}
Following \cite{5}, the convolution of a given current $T(x,\xi)=\sum_{IJ}T_{IJ}(x)dx_I\wedge d\xi_J$ on $\rb^n\times\rb^n$ with $\chi_j$ is defined by $$(T\ast\chi_j)(x,\xi)=\sum_{IJ}(T_{IJ}\ast\chi_j)(x)dx_I\wedge d\xi_J,$$ and $(T\ast\chi_j)_j$ converges weakly to $T$, as $j\l +\infty$. The following lemma is fundamental for what is to come, indeed it gives the relationship between the current $T$ and its local potential $U$. Moreover, it is the counterpart in the superformalism setting of Lemma 5 in \cite{12}.
\begin{lem}\label{lmm1}\
\begin{enumerate}
\item Let $K(x,\xi)=\frac{h(x)}{n!}\beta^{n-1}(x,\xi)$ and $\beta_n(x,\xi)=\frac{1}{n!}\beta^n(x,\xi)=dx_1\wedge d\xi_1\wedge...\wedge dx_n\wedge d\xi_n$. Then, we have $dd^{\#}K(x,\xi)=\delta_0\beta_n(x,\xi)$.
\item $dd^{\#}K_j(x,\xi)=\chi_j(x)\beta_n(x,\xi)$, where $K_j(x,\xi)=(K\ast\chi_j)(x,\xi)$.
\item $U\ast\chi_j=U_j$.
\item There is a smooth form $R_j$ of bidegree $(n-p,n-p)$ on $\rb^n\times\rb^n$, such that $dd^{\#}U_j=(\eta T)\ast\chi_j+R_j$. Moreover, the sequence $(R_j)_j$ converges in $\mathscr C^{\infty}_{n-p,n-p}$ to a form $R$ of bidegree $(n-p,n-p)$ satisfying the equality $dd^{\#}U=T+R$ on $\Omega\times\rb^n$.
\end{enumerate}
\end{lem}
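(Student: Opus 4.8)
The function $h(x)=-c_n|x|^{2-n}$ is, up to its normalising constant, the Newtonian kernel, and the value $c_n=\frac1{(n-2)\mathrm{Vol}_n(B(0,1))}$ is precisely the one for which $\Delta h=n\,\delta_0$ in $\mathscr D'(\rb^n)$. Since $\beta$ has constant coefficients it is $d$- and $d^\#$-closed, so by the Leibniz rule $dd^\#(h\beta^{n-1})=(dd^\# h)\wedge\beta^{n-1}$. I would then use the elementary identity that, for every distribution $g$ on $\rb^n$, one has $(dd^\# g)\wedge\beta^{n-1}=(n-1)!\,(\Delta g)\,\beta_n$; this follows from $\beta^{n-1}=(n-1)!\sum_{|S|=n-1}\bigwedge_{k\in S}(dx_k\wedge d\xi_k)$, because only the diagonal monomials $\partial_{x_i}^2 g\,dx_i\wedge d\xi_i$ of $dd^\# g=\sum_{a,b}\partial_{x_a}\partial_{x_b}g\,dx_a\wedge d\xi_b$ survive the wedge, each one reproducing $\beta_n$. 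Taking $g=h$ and dividing by $n!$ then gives $dd^\# K=\tfrac1n(\Delta h)\beta_n=\delta_0\beta_n$, which is (1); the factor $\tfrac1n$ coming from $(n-1)!/n!$ cancels the factor $n$ in $\Delta h=n\delta_0$. For (2) I would simply apply $\ast\chi_j$ to this identity: convolution commutes with $dd^\#$ and with the coefficient-wise convolution of currents, so $dd^\# K_j=(\delta_0\ast\chi_j)\beta_n=\chi_j\beta_n$ (equivalently, redo the above with $h$ replaced by $h\ast\chi_j$, using $\Delta(h\ast\chi_j)=n\chi_j$).

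\textbf{Part (3).} By the expansion $(\ref{e10})$, every coefficient of $U$ is a finite linear combination, with universal constants, of the functions $B_{IJ}=h\ast\mu_{IJ}$ occurring there, where $\mu_{IJ}$ is the signed measure on $\rb^n$ built from $\eta$ and the coefficients of $T\wedge\beta^{p-s}\wedge dy_J\wedge d\zeta_I$, while the differential parts $\beta^{n-p-1-s}(x,\xi)\wedge dx_I\wedge d\xi_J$ have constant coefficients; hence each coefficient of $U$ is of the form $h\ast\nu$ for a fixed measure $\nu$. Now $U_j$ is just $U$ with $h$ replaced by $h\ast\chi_j$, so its coefficients are $(h\ast\chi_j)\ast\nu=(h\ast\nu)\ast\chi_j$ by associativity of convolution, which is exactly the coefficient-wise convolution $U\ast\chi_j$. (Equivalently, one pushes $\ast\chi_j$ inside the integral defining $U$ and uses translation invariance of convolution together with $\bigl((-c_n|\cdot|^{2-n})\ast\chi_j\bigr)(x-y)=(h\ast\chi_j)(x-y)$.)

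\textbf{Part (4).} First, $R_j:=dd^\# U_j-(\eta T)\ast\chi_j$ is a smooth form of bidimension $(p,p)$, being the difference of two smooth forms ($U_j$ is smooth and $(\eta T)\ast\chi_j$ is the convolution of a measure-coefficient current by a test function). Since $(h\ast\chi_j)_j$ decreases to $h$, the currents $U_j$ decrease weakly to $U$; by continuity of $dd^\#$ one has $dd^\# U_j\to dd^\# U$ and $(\eta T)\ast\chi_j\to\eta T$, so $R_j\to R:=dd^\# U-\eta T$ weakly, and since $\eta\equiv1$ near $\overline\Omega$ this reads $dd^\# U=T+R$ on $\Omega\times\rb^n$. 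It then remains to check that $R$ is a smooth form on $\Omega\times\rb^n$. For this I would write $U_j$ as a kernel convolution against $K_j$ (as in the proof of Proposition $\ref{pro1}$) and apply $dd^\#_{(x,\xi)}$ under the integral, using that $K_j$ depends only on the differences $x-y,\,\xi-\zeta$ to express $dd^\#_{(x,\xi)}$ as the full $dd^\#$ in the difference variables minus $dd^\#_{(y,\zeta)}$ and the two mixed second-order operators. The full-difference piece is $(\eta T)$ convolved against $dd^\# K_j=\chi_j\beta_n$, which by the normalisation of (1)--(2) and the reproducing role of $\beta_n$ equals $(\eta T)\ast\chi_j$; in every remaining piece one integration by parts moves the $(y,\zeta)$-derivatives onto $\eta(y)T(y,\zeta)$, and since $T$ is symmetric and closed ($dT=d^\# T=0$, hence $dd^\# T=0$) what survives is a sum of $dd^\#\eta\wedge T$- and $d\eta\wedge d^\# T$-type terms, each carrying a derivative of $\eta$ and thus supported in a compact $F\subset\rb^n\setminus\overline\Omega$. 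For $x\in\Omega$ these are integrals over $y\in F$ against $h$ and its first-order derivatives, evaluated where $|x-y|\ge\mathrm{dist}(\Omega,F)>0$; they are therefore real-analytic in $x$ and bounded uniformly in $j$, so $R_j\to R$ in $C^\infty_{\mathrm{loc}}(\Omega\times\rb^n)$ and $R$ is smooth there. The only substantial step is this last one: organising inside $(\ref{e10})$ the commutator of $dd^\#$ with the $\eta$-cutoff, discarding via $dd^\# T=0$ all interior terms except the single one that rebuilds $(\eta T)\ast\chi_j$ --- this is exactly where the normalisation $dd^\# K=\delta_0\beta_n$ from (1) is needed, so that the rebuilt term has coefficient $1$ --- and then verifying that the surviving boundary terms carry a kernel supported on $\Supp(d\eta)$ which is regular for $x\in\Omega$. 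Parts (1)--(3) are routine computations; tracking the signs and the combinatorial constants in (4) is the tedious part.
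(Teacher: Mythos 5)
Your proof is correct and follows essentially the same route as the paper's: (1)--(2) reduce to the fact that $dd^{\#}h\wedge\beta^{n-1}$ is a multiple of $\Delta h\cdot\beta_n$ together with $dd^{\#}(K\ast\chi_j)=(dd^{\#}K)\ast\chi_j$, (3) is associativity of convolution applied to the coefficients $B_{IJ}$ in (\ref{e10}), and (4) is the same kernel computation the paper carries out via $U_j={p_{2}}_\ast[p_{1}^{\ast}(\eta T)\wedge\tau^\ast(K_j)]$ -- identify the main term through $dd^{\#}K_j=\chi_j\beta_n$, kill the interior terms using $dT=d^{\#}T=0$, and observe that the remainder carries derivatives of $\eta$ supported outside $\Omega$, hence is smooth there. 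In part (1) your bookkeeping is in fact more careful than the paper's own: with $c_n=\frac{1}{(n-2)Vol_n(B(0,1))}$ one indeed has $\Delta h=n\,\delta_0$ and $dd^{\#}h\wedge\beta^{n-1}=(n-1)!\,\Delta h\,\beta_n$, so the factors $n$ and $\frac{1}{n}$ cancel exactly as you say, whereas the paper's proof writes the two compensating identities $dd^{\#}h\wedge\beta^{n-1}=\Delta h\,\beta^n$ and $\Delta h=\delta_0$.
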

\begin{proof} (1) For all $(x,\xi)\in\rb^n\times\rb^n$, we have
$$\quad dd^{\#}K(x,\xi)=\frac{1}{n!}dd^{\#}h(x)\wedge\beta^{n-1}(x,\xi)=\frac{1}{n!}\Delta(h(x))\beta^n(x,\xi)=\frac{1}{n!}\delta_0\beta^n(x,\xi)=\delta_0\beta_n(x,\xi).$$

(2) For all $(x,\xi)\in\rb^n\times\rb^n$, we have
$$dd^{\#}K_j(x,\xi)=dd^{\#}(K\ast\chi_j)(x,\xi)=(dd^{\#}K\ast\chi_j)(x,\xi)=(\delta_0\ast\chi_j)(x)\beta_n(x,\xi)=\chi_j(x)\beta_n(x,\xi).$$

(3) The equation (\ref{e10}), implies that
$$(U\ast\chi_j)(x,\xi)=\sum_{s=0}^{N}\frac{(n-1)!{(-1)^s}}{(n-p-1-s)!(p-s)!}\left(\sum_{|I|=|J|=s}(B_{IJ}\ast\chi_j)(x)\beta^{n-p-1-s}(x,\xi)\wedge dx_I\wedge d\xi_J\right).$$
Now, since the coefficients of $T$ are measures and depending only on $y$, it is clear that if we put $S_{IJ}(y,\zeta)=\eta(y)T(y,\zeta)\wedge\beta^{p-s}(y,\zeta)\wedge dy_J\wedge d\zeta_I$, then we get
$$\begin{array}{lcl}
(B_{IJ}\ast\chi_j)(x)&=&((S_{IJ}\ast h)\ast\chi_j)(x,\xi)=(S_{IJ}\ast(h\ast\chi_j))(x,\xi)\\&=&\ds\int_{\rb^n\times\rb^n}\eta(y)(h\ast\chi_j)(x-y)T(y,\zeta)\wedge\beta^{p-s}(y,\zeta)\wedge dy_J\wedge d\zeta_I,
\end{array}$$
which completes the proof of the statement.

(4) We denote by $p_1$ and $p_2$ the projections of $(\rb^n\times\rb^n)\times(\rb^n\times\rb^n)$ on $\rb^n\times\rb^n$ such that $p_1((x,\xi),(y,\zeta))=(x,\xi)$ and $p_2((x,\xi),(y,\zeta))=(y,\zeta)$. Let $\tau((x,\xi),(y,\zeta))=(x-y,\xi-\zeta)$, when we integrate on the fibers of $p_2$, we get
\begin{equation}\label{0}
U={p_{2}}_\ast[p_{1}^{\ast}(\eta T)\wedge\tau^\ast(K)]\qquad \mathrm{and}\qquad U_j={p_{2}}_\ast[p_{1}^{\ast}(\eta T)\wedge\tau^\ast(K_j)].
\end{equation}
Since $p_{2\ast}$ and $p_1^\ast$ are commute with $d$ and $d^{\#}$ (see \cite{5}), we obtain $$dd^{\#}U_j={p_{2}}_\ast[p_{1}^{\ast}(\eta T)\wedge\tau^\ast(\chi_j{\beta_n})]+R_j\qquad \mathrm{and}\qquad dd^{\#}U={p_{2}}_\ast[p_{1}^{\ast}(\eta T)\wedge\tau^\ast(\delta_0{\beta_n})]+R,$$ where $$R_j={p_{2}}_\ast[p_{1}^{\ast}(dd^{\#}\eta\wedge T)\wedge\tau^\ast(K_j)]-{p_{2}}_\ast[p_{1}^{\ast}(d^{\#}\eta\wedge T)\wedge\tau^\ast(dK_j)]+{p_{2}}_\ast[p_{1}^{\ast}(d\eta\wedge T)\wedge\tau^\ast(d^{\#}K_j)]$$ and $$R={p_{2}}_\ast[p_{1}^{\ast}(dd^{\#}\eta\wedge T)\wedge\tau^\ast(K)]-{p_{2}}_\ast[p_{1}^{\ast}(d^{\#}\eta\wedge T)\wedge\tau^\ast(dK)]+{p_{2}}_\ast[p_{1}^{\ast}(d\eta\wedge T)\wedge\tau^\ast(d^{\#}K)]$$ are two forms of bidegree $(n-p,n-p)$ on $\rb^n\times\rb^n$. By writing each term of $R$ in the integral form as in Definition \ref{d1} and using the fact that $\eta\equiv1$ on $\overline\Omega$, it is obvious that $R$ is smooth on $\Omega\times\rb^n$. If $T(y,\zeta)=\sum_{|I|=|J|=n-p}T_{IJ}(y)dy_I\wedge d\zeta_J$, $f_j(x,\xi)={p_{2}}_\ast[p_{1}^{\ast}(\eta T)\wedge\tau^\ast(\chi_j{\beta_n})]$ and $g(x,\xi)={p_{2}}_\ast[p_{1}^{\ast}(\eta T)\wedge\tau^\ast(\delta_0{\beta_n})]$, then
$$f_j(x,\xi)=\frac{1}{n!}\sum_{|I|=|J|=n-p}\int_{(y,\zeta)\in\rb^n\times\rb^n}\eta(y)\chi_j(x-y)T_{IJ}(y)dy_I\wedge d\zeta_J\wedge\beta^n(x-y,\xi-\zeta)$$ and
$$g(x,\xi)=\frac{1}{n!}\sum_{|I|=|J|=n-p}\int_{(y,\zeta)\in\rb^n\times\rb^n}\eta(y)\delta_0(x-y)T_{IJ}(y)dy_I\wedge d\zeta_J\wedge\beta^n(x-y,\xi-\zeta).$$
Since $\beta^n(x-y,\xi-\zeta)=\bigwedge_{j=1}^{n}(dx_j\wedge d\xi_j-dx_j\wedge d\zeta_j-dy_j\wedge d\xi_j+dy_j\wedge d\zeta_j)$, then by induction on $q=n-p$, the $(n,n)$-component in $(y,\zeta)$ and the $(n-p,n-p)$-component in $(x,\xi)$ of the form $dy_I\wedge d\zeta_J\wedge\beta^{n}(x-y,\xi-\zeta)$ is equals to
$$\frac{n!}{(n-p)!p!}dy_I\wedge d\zeta_J\wedge\beta^{n-p}(x,\xi)\wedge\beta^p(y,\zeta)=dx_I\wedge d\xi_J\wedge\beta^n(y,\zeta)=n!dx_I\wedge d\xi_J\wedge\beta_n(y,\zeta).$$
Thus,
$$\begin{array}{lcl}
f_j(x,\xi)&=&\ds\sum_{|I|=|J|=n-p}dx_I\wedge d\xi_J\left(\int_{(y,\zeta)\in\rb^n\times\rb^n}\chi_j(x-y)\eta(y)T_{IJ}(y)\beta_n(y,\zeta)\right)\\&=&\ds\sum_{|I|=|J|=n-p}(\eta T_{IJ}\ast\chi_j)(x)dx_I\wedge d\xi_J=(\eta T\ast\chi_j)(x,\xi)
\end{array}$$ and
$$\begin{array}{lcl}
g(x,\xi)&=&\ds\sum_{|I|=|J|=n-p}dx_I\wedge d\xi_J\left(\int_{(y,\zeta)\in\rb^n\times\rb^n}\delta_0(x-y)\eta(y)T_{IJ}(y)\beta_n(y,\zeta)\right)\\&=&\ds\sum_{|I|=|J|=n-p}(\eta T_{IJ}\ast\delta_0)(x)dx_I\wedge d\xi_J=\eta T(x,\xi).
\end{array}$$
Hence, $dd^{\#}U_j=(\eta T)\ast\chi_j+R_j$ and $dd^{\#}U=\eta T+R$ on $\rb^n\times\rb^n$. In particular, we obtain $dd^{\#}U=T+R$ on $\Omega\times\rb^n$.
\end{proof}
In the following theorem, we use the local potential as a tool to extend Theorem 6.1 in \cite{18} for the case where $T$ is weakly positive and $T\wedge\beta^{n-p}$ is convex.
\begin{thm} Let $\Omega$ be an open subset of $\rb^n$ and  $T$ be a weakly positive convex current of bidimension $(p,p)$ on
$\{\Omega\smallsetminus K\}\times\rb^n$ with locally finite mass near $K$, where $K$ is a compact subset of $\rb^n$ with sigma-finite $(p-2)$-dimensional Hausdorff measure.  Then, there exists a positive measure $S$ such that
$$\widetilde{dd^{\#}T\wedge\beta^{p-1}}=dd^{\#}\widetilde{T}\wedge\beta^{p-1}+S,$$ where $\widetilde{T}$ and $\widetilde{dd^{\#}T\wedge\beta^{p-1}}$ are the  extensions by $0$ of $T$ and $dd^{\#}T\wedge\beta^{p-1}$ across $K\times\rb^n$.
\end{thm}
\begin{proof} We prove firstly that $dd^{\#}T\wedge\beta^{p-1}$ is of locally finite mass near $K\times\rb^n$. Consider $U$ as the local potential associated to $dd^{\#}T$. Since the problem is local, we can assume that $\Omega=\{\psi<0\}$ is a strictly convex domain on $\rb^n$, where $\psi$ is a smooth strictly convex exhaustion function on $\Omega$. As in Lemma \ref{lmm1}, we have $dd^{\#}U=dd^{\#}T+R$, where $R$ is with uniformly bounded coefficients on $\Omega\times\rb^n$, then there exists $A>0$ such that $R\geqslant -A(dd^{\#}\psi)^{n-p+1}$. Setting $W=U-T+A\psi(dd^{\#}\psi)^{n-p}$, then it is clear that $W$ is a weakly negative convex current of bidimension $(p,p)$ on $\{\Omega\smallsetminus K\}\times\rb^n$. Since $W$ has a locally finite mass near $K\times\rb^n$, then by applying Theorem 6.1 in \cite{18} for the current $-W$, there exists a positive measure $S_1$ such that 
\begin{equation}\label{E2}
\widetilde{dd^{\#}W\wedge\beta^{p-1}}+S_1=dd^{\#}\widetilde{W}\wedge\beta^{p-1}.
\end{equation}
 On the other hand, we have $$dd^{\#}\widetilde{W}\wedge\beta^{p-1}=dd^{\#}U\wedge\beta^{p-1}-dd^{\#}\widetilde{T}\wedge\beta^{p-1}+A(dd^{\#}\psi)^{n-p+1}\wedge\beta^{p-1}$$ and 
$$\widetilde{dd^{\#}W\wedge\beta^{p-1}}=\widetilde{dd^{\#}U\wedge\beta^{p-1}}-\widetilde{dd^{\#}T\wedge\beta^{p-1}}+A(dd^{\#}\psi)^{n-p+1}\wedge\beta^{p-1}.$$
Next, by applying once again Theorem 6.1 in \cite{18} on the current $-U-A\psi(dd^{\#}\psi)^{n-p}$ which is weakly positive and concave on $\{\Omega\smallsetminus K\}\times\rb^n$, there exists a positive measure $S_2$ such that 
$$\widetilde{dd^{\#}U\wedge\beta^{p-1}}+A(dd^{\#}\psi)^{n-p+1}\wedge\beta^{p-1}+S_2=dd^{\#}U\wedge\beta^{p-1}+A(dd^{\#}\psi)^{n-p+1}\wedge\beta^{p-1}.$$
Consequently, in view of $(\ref{E2})$, the proof is completed by considering $S=S_1-S_2$ which is a positive measure due to the proof of the Theorem 6.1 in \cite{18}.
\end{proof}
\subsection{SuperHessian operator}
In this subsection, we will focus on the convergence of the sequence of operators $U_j\wedge dd^{\#}v^{j}_1\wedge...\wedge dd^{\#}v^{j}_q$, where $U_j =U\ast\chi_j$ is a smooth regularization of the local potential $U$ and $(v^{j}_1)_j,...,(v^{j}_q)_j$ are sequences of convex functions which converge locally uniformly respectively towards $v_1,...,v_q$. Strongly motivated by techniques goes back to Ben Messaoud and El Mir \cite{12} in the complex analysis, we extend firstly to the superformalism context the following convergence result: 
\begin{thm A} Let $1\leqslant p\leqslant n$, $S$ is a current of bidimension $(p,p)$ on $\Omega\times\rb^n$ and $v_1,...,v_q$ are convex functions on $\Omega$ for $1\leqslant q\leqslant p$. If we assume that there exists $(S_j)_j$ a sequence of smooth forms  of bidegree $(n-p,n-p)$ on $\Omega\times\rb^n$ such that $S_j$ is negative, $dd^{\#}S_j$ is positive and $(S_j)_j$ decreases weakly to $S$, and if we take $v^{j}_1,...,v^{j}_q$ to be sequences of convex functions which converge locally uniformly respectively to $v_1,...,v_q$. Then, we have:
\begin{enumerate}
\item $(S_j\wedge dd^{\#}v_1\wedge...\wedge dd^{\#}v_q)_j$ converges weakly on $\Omega$, where $j\rightarrow+\infty$. Denote its limit by $S\wedge dd^{\#}v_1\wedge...\wedge dd^{\#}v_q$.
\item If we also assume that $v^{j}_i\geqslant v_i, \forall i=1,...,q$, then:
\begin{itemize}
\item[i)] $S\wedge dd^{\#}v^{j}_1\wedge...\wedge dd^{\#}v^{j}_q\longrightarrow S\wedge dd^{\#}v_1\wedge...\wedge dd^{\#}v_q$ weakly on $\Omega$.
\item[ii)] $S_j\wedge dd^{\#}v^{j}_1\wedge...\wedge dd^{\#}v^{j}_q\longrightarrow S\wedge dd^{\#}v_1\wedge...\wedge dd^{\#}v_q$ weakly on $\Omega$.
\item[iii)] $dd^{\#}(S\wedge dd^{\#}v_1\wedge...\wedge dd^{\#}v_q)=dd^{\#}S\wedge dd^{\#}v_1\wedge...\wedge dd^{\#}v_q$.
\end{itemize}
\end{enumerate}
\end{thm A}
\begin{exe} The current $S=\frac{-\beta^{p+1}}{|x|^{p}}$ satisfies the hypothesis of Theorem A. Indeed, if we consider the sequence 
$S_j=\frac{-\beta^{p+1}}{\left(|x|^{2}+\frac{1}{j}\right)^{\frac{p}{2}}}\leqslant0$, a simple computation gives 
$$\begin{array}{lcl}
dd^{\#}S_j&=&\ds p\left(\frac{\beta^{p+2}}{\left(|x|^{2}+\frac{1}{j}\right)^{\frac{p+2}{2}}}-\frac{p+2}{4}\frac{d|x|^2\wedge d^{\#}|x|^2\wedge\beta^{p+1}}{\left(|x|^{2}+\frac{1}{j}\right)^{\frac{p+3}{2}}}\right)\\&=&\ds p\left(dd^{\#}\left(\left(|x|^{2}+\frac{1}{j}\right)^{\frac{1}{2}}\right)\right)^{p+2}
\geqslant0.
\end{array}$$ 
\end{exe}
The proof of Theorem A needs some tools such as the Chern-Levine-Nirenberg type inequality and two lemmas in the superformalism setting with proofs almost identical to the complex case.
\begin{pro}[Chern-Levine-Nirenberg type inequality]\label{pp3} Let $T$ be a positive closed current of bidimension $(p,p)$ on $\Omega\times\rb^n$ and let $v_1,..,v_q$ are convex functions on $\Omega$ for $1\leqslant q\leqslant p$. Then, for every compact $K$ and $L$ with $K\subset\mathring{L}\Subset\Omega$, there exists a constant $C_{K,L}\geqslant 0$ such that
$$\|T\wedge dd^{\#}v_1\wedge...\wedge dd^{\#}v_q\|_K\leqslant C_{K,L}\|v_1\|_{L^\infty(L)}...\|v_q\|_{L^\infty(L)}\|T\|_{L}.$$
\end{pro}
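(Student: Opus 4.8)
The plan is to argue by induction on $q$, using the inductive construction of the operator recalled just above: one has $T\wedge dd^{\#}v_{1}\wedge\dots\wedge dd^{\#}v_{q}=dd^{\#}(v_{1}S)$, where $S:=T\wedge dd^{\#}v_{2}\wedge\dots\wedge dd^{\#}v_{q}$ (with $S:=T$ when $q=1$) is a positive closed current of bidimension $(p-q+1,p-q+1)$. Fix compacts $K\subset\overset{\circ}{L}\Subset\Omega$ and squeeze in one more, $K\subset\overset{\circ}{L_{1}}\subset L_{1}\subset\overset{\circ}{L}$. Pick $\chi\in C^{\infty}(\Omega)$ with $0\leqslant\chi\leqslant 1$, $\chi\equiv 1$ near $K$ and $\Supp\chi\subset\overset{\circ}{L_{1}}$; since the coefficients of $dd^{\#}\chi$ are the bounded functions $\partial_{x_{i}}\partial_{x_{j}}\chi$ and $\beta$ is a K\"ahler form, after enlarging a constant $C_{K,L}$ depending only on $K$ and $L$ (through the choice of $\chi$) we may assume $-C_{K,L}\,\beta\leqslant dd^{\#}\chi\leqslant C_{K,L}\,\beta$.

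Next I would localise the mass and integrate by parts. Set $T':=T\wedge dd^{\#}v_{1}\wedge\dots\wedge dd^{\#}v_{q}$, a positive current of bidimension $(p-q,p-q)$. By the proportionality between the mass of a positive current and its trace measure recalled in Section~2, and since $T'\wedge\beta^{p-q}$ is a positive measure with $\chi\equiv 1$ on $K$,
$$\|T'\|_{K}\leqslant c\,T'\wedge\beta^{p-q}(K)\leqslant c\int\chi\,T'\wedge\beta^{p-q}.$$
Now write $T'=dd^{\#}(v_{1}S)$; since $d\beta=d^{\#}\beta=0$ one has $dd^{\#}(\chi\beta^{p-q})=dd^{\#}\chi\wedge\beta^{p-q}$, and since $S$ is closed the integration by parts (no boundary term, as $\chi$ is compactly supported) gives
$$\int\chi\,T'\wedge\beta^{p-q}=\int v_{1}\,S\wedge dd^{\#}\chi\wedge\beta^{p-q}.$$
Using the positivity of $S$ together with $-C_{K,L}\beta\leqslant dd^{\#}\chi\leqslant C_{K,L}\beta$, the signed measure $S\wedge dd^{\#}\chi\wedge\beta^{p-q}$ is dominated in absolute value by $C_{K,L}\,S\wedge\beta^{p-q+1}$; estimating $|v_{1}|$ by $\|v_{1}\|_{L^{\infty}(L_{1})}$ on $\Supp\chi$ and applying the mass/trace proportionality once more to $S$,
$$\|T'\|_{K}\leqslant C'_{K,L}\,\|v_{1}\|_{L^{\infty}(L_{1})}\int_{L_{1}}S\wedge\beta^{p-q+1}\leqslant C''_{K,L}\,\|v_{1}\|_{L^{\infty}(L)}\,\|S\|_{L_{1}}.$$

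To close the induction, observe that $S=T\wedge dd^{\#}v_{2}\wedge\dots\wedge dd^{\#}v_{q}$ is again a positive closed current wedged with the $dd^{\#}$ of the $q-1$ locally bounded convex functions $v_{2},\dots,v_{q}$, so the inductive hypothesis on the pair $(L_{1},L)$ yields $\|S\|_{L_{1}}\leqslant C\,\|v_{2}\|_{L^{\infty}(L)}\cdots\|v_{q}\|_{L^{\infty}(L)}\,\|T\wedge\beta^{n-m}\|_{L}$; combined with the last display this is the assertion for $q$ functions. The base case $q=1$ reduces, after the same localisation, to $\|T\|_{L_{1}}\leqslant c\,\|T\wedge\beta^{n-m}\|_{L}$, which is once again the comparison between the mass of the positive current $T$ and its trace measure recalled in Section~2 (together with $L_{1}\subset L$).

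The delicate point is the integration by parts, i.e. the identity $\langle dd^{\#}(v_{1}S),\chi\beta^{p-q}\rangle=\langle v_{1}S,dd^{\#}\chi\wedge\beta^{p-q}\rangle$: this is where boundedness of $v_{1}$ and finiteness of the mass of the order-zero current $S$ enter, guaranteeing that $v_{1}S$ has locally finite mass, so that $dd^{\#}(v_{1}S)$ is a well-defined current acting on the smooth compactly supported form $\chi\beta^{p-q}$ by transposition. The second thing to keep in mind is that the whole induction presupposes that each partial product $T\wedge dd^{\#}v_{j}\wedge\dots\wedge dd^{\#}v_{q}$ is positive and closed, so that both the localisation inequality and the sign estimate for $dd^{\#}\chi$ apply; this positivity and closedness is precisely what the construction of the superHessian operator preceding this Proposition establishes, and the inequality above is, in effect, the a priori bound that makes that construction work.
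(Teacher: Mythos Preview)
Your argument is correct and follows essentially the same route as the paper: induction on $q$, localisation via a cutoff $\chi$, the mass--trace proportionality for positive supercurrents (Proposition~4.1 in \cite{5}), and integration by parts to move $dd^{\#}$ from $v_{1}$ onto $\chi$, then bounding $dd^{\#}\chi$ by a multiple of $\beta$. The paper's proof is slightly terser---it phrases the induction as ``it suffices to treat a single function $v$'' and does not explicitly introduce the intermediate compact $L_{1}$---but the mechanism is identical. One small remark: the factor $\|T\wedge\beta^{n-m}\|_{L}$ in the displayed statement is a typo (no $m$ has been introduced at this point; the paper's own proof ends with $\|T\|_{L}$), so your base-case reduction to $\|T\|_{L_{1}}\leqslant c\,\|T\wedge\beta^{n-m}\|_{L}$ should simply read $\|T\|_{L_{1}}\leqslant\|T\|_{L}$, which is immediate.
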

\begin{proof} Using induction, it suffices to prove that
$$\|T\wedge dd^{\#}v\|_K\leqslant C_{K,L}\|v\|_{L^\infty(L)}\|T\|_{L}.$$
In fact, let $\chi$ be a smooth and compactly supported function on $L$ such that $0\leqslant\chi\leqslant1$ and $\chi\equiv1$ on $K$. Thus, Proposition 4.1 in \cite{5} combined with an integration by parts, yield
$$\begin{array}{lcl}
\|T\wedge dd^{\#}v\|_K&\leqslant&\ds C_1\int_{K\times\rb^n}T\wedge\beta^{p-1}\wedge dd^{\#}v\\&\leqslant&\ds C_1\int_{L\times\rb^n}\chi T\wedge\beta^{p-1}\wedge dd^{\#}v\\&=&\ds C_1\int_{L\times\rb^n}vT\wedge\beta^{p-1}\wedge dd^{\#}\chi\\&\leqslant&\ds C_2\|v\|_{L^\infty(L)}\|T\|_{L},
\end{array}$$
where $C_2$ depends only on bounds of coefficients of $dd^{\#}\chi$ and on the set $L$.
\end{proof}
\begin{lem}\label{lmmm1} Let $\varphi$ be a symmetric (closed) form of bidegree $(p,p)$ on $\rb^n\times\rb^n$. Then, there exist two positive (closed) forms $\varphi_{1}$ and $\varphi_{2}$ of bidegree $(p,p)$ such that $\varphi=\varphi_{1}-\varphi_{2}$.
\end{lem}
\begin{lem}\label{lmmm2} Let $\Omega_0\Subset\Omega$ and $1\leqslant q\leqslant n$. Let $\varphi$ be a form of bidegree $(n-q,n-q)$ on $\Omega_0\times\rb^n$ such that $\varphi$ is smooth on $\overline{\Omega}_0\times\rb^n$ and $dd^{\#}\varphi$ is positive, and let $v_1,...,v_q$ and $w_1,...,w_q$ are convex functions on a neighborhood of $\overline{\Omega}_0$ , such that:
\begin{itemize}
\item[i)] $\forall i=1,...,q,\ w_i\geqslant v_i$ on $\Omega_0$,
\item[ii)] $\forall i=1,...,q,\ w_i=v_i$ on a neighborhood of $\partial\Omega_0$.
\end{itemize}
Then, $$\ds\int_{\Omega_0\times\rb^n}\varphi\wedge dd^{\#}w_1\wedge...\wedge dd^{\#}w_q\geqslant\ds\int_{\Omega_0\times\rb^n}\varphi\wedge dd^{\#}v_1\wedge...\wedge dd^{\#}v_q.$$
\end{lem}

\begin{proofof}{\it Theorem A.} Since the problem is locally, then we can work on a strictly convex subset $\Omega_0\Subset\Omega$ such that $\Omega_0=\{\psi<0\}$. Thanks to Theorem B in \cite{22}, we chose $\psi$ to be a smooth strictly convex exhaustion function on $\overline{\Omega}_0$ such that $dd^{\#}(\psi-|x|^2)\geqslant0$. For $i=1,...,q$, the function $v_i$ is convex on $\overline{\Omega}_0$, then it is locally bounded and we can assume that $-M\leqslant v_i\leqslant-1$, where $M>0$.

(1) Let $\varphi$ be a symmetric form of bidegree $(p-q,p-q)$ on $\Omega_0\times\rb^n$, we need to prove that the sequence $(\langle S_j\wedge dd^{\#}v_1\wedge...\wedge dd^{\#}v_q,\varphi\rangle)_j$ converges. Thanks to Lemma \ref{lmmm1}, it suffices to suppose that $\varphi$ is positive. So that $(\langle S_j\wedge dd^{\#}v_1\wedge...\wedge dd^{\#}v_q,\varphi\rangle)_j$ is decreasing and it remains to show that it is bounded from below. Let $K\Subset\Omega_0$ such that $\Supp\varphi\subset K$, and let $c>0$ such that $\varphi\leqslant c{\1}_K(dd^{\#}\psi)^{p-q}$. Since $S_j\leqslant0$, then
$$\int_{\Omega_0\times\rb^n}S_j\wedge dd^{\#}v_1\wedge...\wedge dd^{\#}v_q\wedge\varphi\geqslant c\int_{K\times\rb^n}S_j\wedge dd^{\#}v_1\wedge...\wedge dd^{\#}v_q\wedge(dd^{\#}\psi)^{p-q}.$$
Let $\delta>0$ small enough such that $K\subset\Omega_\delta=\{\psi<-\delta\}$, and let $w_i=\max\left(\frac{M}{\delta}\psi,v_i\right),\ \forall i=1,...,q$. So, $w_i$ is convex in a neighborhood of $\overline{\Omega}_0$, $w_i=v_i$ on $\Omega_\delta$ and $w_i=\frac{M}{\delta}\psi$ on $\Omega_0\smallsetminus\Omega_{\frac{\delta}{M}}$. Thus,
$$\begin{array}{lcl}
& &\ds\int_{K\times\rb^n}S_j\wedge dd^{\#}v_1\wedge...\wedge dd^{\#}v_q\wedge(dd^{\#}\psi)^{p-q}\geqslant\\&\geqslant&\ds\int_{\Omega_\delta\times\rb^n}S_j\wedge dd^{\#}w_1\wedge...\wedge dd^{\#}w_q\wedge(dd^{\#}\psi)^{p-q}\\&\geqslant&\ds\int_{\Omega_{\frac{\delta}{2M}}\times\rb^n}S_j\wedge dd^{\#}w_1\wedge...\wedge dd^{\#}w_q\wedge(dd^{\#}\psi)^{p-q}.
\end{array}$$
Let $\chi$ be a positive and compactly supported function on $\Omega_{\frac{\delta}{3M}}$ such that $\chi=-\psi$ on $\Omega_{\frac{\delta}{2M}}$, then
$$\begin{array}{lcl}
& &\ds\int_{\Omega_{\frac{\delta}{2M}}\times\rb^n}S_j\wedge dd^{\#}w_1\wedge...\wedge dd^{\#}w_q\wedge(dd^{\#}\psi)^{p-q}=\\&=&\ds-\int_{\Omega_{\frac{\delta}{2M}}\times\rb^n}S_j\wedge dd^{\#}w_1\wedge...\wedge dd^{\#}w_q\wedge(dd^{\#}\psi)^{p-q-1}\wedge dd^{\#}\chi\\&=&\ds-\int_{\Omega_{\frac{\delta}{3M}}\times\rb^n}\chi dd^{\#}S_j\wedge dd^{\#}w_1\wedge...\wedge dd^{\#}w_q\wedge(dd^{\#}\psi)^{p-q-1}\\&+&\ds\frac{M^q}{\delta^q}\int_{(\Omega_{\frac{\delta}{3M}}\smallsetminus\Omega_{\frac{\delta}{2M}})\times\rb^n}S_j\wedge(dd^{\#}\psi)^{p-1}\wedge dd^{\#}\chi.
\end{array}$$
By using Proposition \ref{pp3}, we have
$$\left|\int_{\Omega_{\frac{\delta}{2M}}\times\rb^n}S_j\wedge dd^{\#}w_1\wedge...\wedge dd^{\#}w_q\wedge(dd^{\#}\psi)^{p-q}\right|\leqslant M^qC_{K,\Omega_0}\left(\|dd^{\#}S_j\|_{\Omega_{\frac{\delta}{3M}}}+\|S_j\|_{\Omega_{\frac{\delta}{3M}}}\right).$$
Now, let $\chi'$ be a smooth and compactly supported function on $\Omega_0$ such that $0\leqslant\chi\leqslant1$ and $\chi\equiv1$ on $\overline{\Omega}_{\frac{\delta}{3M}}$. Since $dd^{\#}S_j$ is positive, Proposition 4.1 in \cite{5} and an integration by parts, yield
$$\begin{array}{lcl}
\ds\|dd^{\#}S_j\|_{\Omega_{\frac{\delta}{3M}}}&\leqslant&\ds C\int_{\Omega_{\frac{\delta}{3M}}\times\rb^n}dd^{\#}S_j\wedge\beta^{n-p-1}\\&\leqslant&\ds C\int_{\Omega_{0}\times\rb^n}\chi' dd^{\#}S_j\wedge\beta^{n-p-1}\\&\leqslant&\ds C\int_{\Omega_{0}\times\rb^n} S_j\wedge dd^{\#}\chi'\wedge\beta^{n-p-1}\leqslant C_{\Omega_0}\|S_j\|_{\Omega_0}.
\end{array}$$
Thus, since $0\geqslant S_j\geqslant S$, we have
\begin{equation}\label{eqq1}
\left|\int_{K\times\rb^n}S_j\wedge dd^{\#}v_1\wedge...\wedge dd^{\#}v_q\wedge(dd^{\#}\psi)^{p-q}\right|\leqslant  M^qC_{K,\Omega_0}\|S\|_{\Omega_0},
\end{equation}
and (1) will follows.

(2)i) Firstly, we assume that $q<p$. Let $\Theta$ be a limit of the sequence $(S\wedge dd^{\#}v^{j}_1\wedge...\wedge dd^{\#}v^{j}_q)_j$, which is locally bounded in mass thanks to (\ref{eqq1}). To prove that $\Theta=S\wedge dd^{\#}v_1\wedge...\wedge dd^{\#}v_q$, we take a sub-sequence of $(S\wedge dd^{\#}v^{j}_1\wedge...\wedge dd^{\#}v^{j}_q)_j$ which converges weakly to $\Theta$. Let $k>l\geqslant1$, then
$$S_k\wedge dd^{\#}v^{j}_1\wedge...\wedge dd^{\#}v^{j}_q\leqslant S_l\wedge dd^{\#}v^{j}_1\wedge...\wedge dd^{\#}v^{j}_q.$$
When we let $k\rightarrow+\infty$, $j\rightarrow+\infty$ and $l\rightarrow+\infty$ in this order, the first result (1) yields
\begin{equation}\label{eqq2}
\Theta\leqslant S\wedge dd^{\#}v_1\wedge...\wedge dd^{\#}v_q.
\end{equation}
Since every $(v_{i}^{j})_j$ converges locally uniformly to $v_i$, we can assume, without loss of generality, that $\forall i=1,...,q$ and $\forall j\in\nb^\ast$, $-M\leqslant v_{i}^{j}\leqslant-1$ on $\overline{\Omega}_0$. Let $K\Subset\Omega_0$ and let $\delta$ small enough such that $K\Subset\Omega_\delta$. Let us set $w^{j}_i=\max\left(\frac{M}{\delta}\psi,v^{j}_i\right)$ and $w_i=\max\left(\frac{M}{\delta}\psi,v_i\right),\ \forall i=1,...,q$. So, $w^{j}_i$ is convex in a neighborhood of $\overline{\Omega}_0$, $w^{j}_i=v^{j}_i$ on $\Omega_\delta$ and $w^{j}_i=\frac{M}{\delta}\psi$ on $\Omega_0\smallsetminus\Omega_{\frac{\delta}{M}}$. Let $\Theta'$ be an another limit of $(S\wedge dd^{\#}v^{j}_1\wedge...\wedge dd^{\#}v^{j}_q)_j$ on a neighborhood of $\overline{\Omega}_0\times\rb^n$, as in (\ref{eqq2}) we have
\begin{equation}\label{eqq3}
\Theta'\leqslant S\wedge dd^{\#}w_1\wedge...\wedge dd^{\#}w_q.
\end{equation}
Lemma \ref{lmmm2} implies
$$\int_{\Omega_0\times\rb^n}S_l\wedge dd^{\#}w^{j}_1\wedge...\wedge dd^{\#}w^{j}_q\wedge(dd^{\#}\psi)^{p-q}\geqslant\int_{\Omega_0\times\rb^n}S_l\wedge dd^{\#}w_1\wedge...\wedge dd^{\#}w_q\wedge(dd^{\#}\psi)^{p-q},$$
and thanks to (1), we get
\begin{equation}\label{eqq4}
\int_{\Omega_0\times\rb^n}S\wedge dd^{\#}w^{j}_1\wedge...\wedge dd^{\#}w^{j}_q\wedge(dd^{\#}\psi)^{p-q}\geqslant\int_{\Omega_0\times\rb^n}S\wedge dd^{\#}w_1\wedge...\wedge dd^{\#}w_q\wedge(dd^{\#}\psi)^{p-q}.
\end{equation}
Now, we replace the sequence of positive measures $(S\wedge dd^{\#}w^{j}_1\wedge...\wedge dd^{\#}w^{j}_q\wedge(dd^{\#}\psi)^{p-q})_j$ with a sub-sequence $\eta_j$ which converges weakly to $\eta=\Theta'\wedge(dd^{\#}\psi)^{p-q}$ on $\overline{\Omega}_0\times\rb^n$. Since $\limsup_{j\rightarrow+\infty}\eta_j(\Omega_0)\leqslant\eta(\Omega_0)$, then (\ref{eqq4}) yields
$$\int_{\Omega_0\times\rb^n}\Theta'\wedge(dd^{\#}\psi)^{p-q}\geqslant\int_{\Omega_0\times\rb^n}S\wedge dd^{\#}w_1\wedge...\wedge dd^{\#}w_q\wedge(dd^{\#}\psi)^{p-q}.$$
Thus, by (\ref{eqq3}), we have
\begin{equation}\label{eqq5}
\int_{\Omega_0\times\rb^n}\Theta'\wedge(dd^{\#}\psi)^{p-q}=\int_{\Omega_0\times\rb^n}S\wedge dd^{\#}w_1\wedge...\wedge dd^{\#}w_q\wedge(dd^{\#}\psi)^{p-q}.
\end{equation}
Let $\varphi$ be a symmetric form of bidegree $(p-q-1,p-q-1)$ on $\Omega_0\times\rb^n$, then according to Lemma \ref{lmmm1}, there exist two positive closed forms $\varphi_{1}$ and $\varphi_{2}$ of bidegree $(p-q,p-q)$ on $\Omega_0\times\rb^n$ such that $dd^{\#}\varphi=\varphi_1-\varphi_2$. Thus, (\ref{eqq5}) remains valid for $\varphi_1$ and $\varphi_2$ instead of $(dd^{\#}\psi)^{p-q}$, and then it is valid for $dd^{\#}\varphi$ which implies that $dd^{\#}E=0$. Since $E=S\wedge dd^{\#}w_1\wedge...\wedge dd^{\#}w_q-\Theta'$ is positive and is compactly supported, then Proposition 4.2 in \cite{18} yields $E=0$, and therefore $(S\wedge dd^{\#}w^{j}_1\wedge...\wedge dd^{\#}w^{j}_q)_j$ converges weakly to $S\wedge dd^{\#}w_1\wedge...\wedge dd^{\#}w_q$. Since $w^{j}_i=v^{j}_i$ and $w_i=v_i$ on $K$, then $(S\wedge dd^{\#}v^{j}_1\wedge...\wedge dd^{\#}v^{j}_q)_j$ converges weakly to $S\wedge dd^{\#}v_1\wedge...\wedge dd^{\#}v_q$ on $K\times\rb^n$, which concludes (2)i). Secondly, for $q=p$, let $\pi$ be the projection of $\Omega\times\rb$ on $\Omega$. Then, the currents $\pi^\ast(S)$ and $\pi^\ast(S_j)$ are of bidimension $(p+1,p+1)$ and verifies the hypothesis of Theorem A. Since $p+1>q$, then we can use the first step by setting $\tilde{v}_i=v_i\circ\pi$ and $\tilde{v}^{j}_i=v^{j}_i\circ\pi$, for all $i=1,...,q$, and our result will be concluded by Fubini's theorem.

ii) Let $v_{i}^{j,k}=v_{i}^{j}\ast\chi_k$ be the standard regularization of $v_{i}^{j}$. Since $dd^{\#}v^{j,k}_1\wedge...\wedge dd^{\#}v^{j,k}_q$ is positive and the sequence $(S_j)_j$ is decreasing, we have $$S_j\wedge dd^{\#}v^{j,k}_1\wedge...\wedge dd^{\#}v^{j,k}_q\leqslant S_l\wedge dd^{\#}v^{j,k}_1\wedge...\wedge dd^{\#}v^{j,k}_q,\quad\forall j>l.$$
Furthermore, if $k\rightarrow+\infty$, then
$$S_j\wedge dd^{\#}v^{j}_1\wedge...\wedge dd^{\#}v^{j}_q\leqslant S_l\wedge dd^{\#}v^{j}_1\wedge...\wedge dd^{\#}v^{j}_q,\quad\forall j>l.$$
Since the sequence $(S_j\wedge dd^{\#}v^{j}_1\wedge...\wedge dd^{\#}v^{j}_q)_j$ is locally bounded in mass, then we can extract a weakly convergent sub-sequence and denote by $\Theta$ his limit. Then,
$$\Theta\leqslant S_l\wedge dd^{\#}v_1\wedge...\wedge dd^{\#}v_q.$$
By (1), when $l\rightarrow+\infty$, we get
$$\Theta\leqslant S\wedge dd^{\#}v_1\wedge...\wedge dd^{\#}v_q.$$
Next, since $S_j\geqslant S$, we have
$$S_j\wedge dd^{\#}v^{j,k}_1\wedge...\wedge dd^{\#}v^{j,k}_q\geqslant S\wedge dd^{\#}v^{j,k}_1\wedge...\wedge dd^{\#}v^{j,k}_q.$$
By (2)i), when $k\rightarrow+\infty$ and $j\rightarrow+\infty$ respectively, we get
$$\Theta\geqslant S\wedge dd^{\#}v_1\wedge...\wedge dd^{\#}v_q.$$

iii) Here, all we need to do is to replace $v_i$ with the standard regularization $v_{i}\ast\chi_k$, and just use (2)i) and the continuity of the operator $dd^{\#}$.
\end{proofof}

As an immediate consequence of Theorem A, we get our main theorem of this section which is the counterpert of a result due to \cite{12} in the complex setting:
\begin{thm B} Let $1\leqslant p<n$ and let $v_1,..,v_q$ are convex functions on $\Omega$ for $1\leqslant q\leqslant p+1$. Let $U$ be the local potential associated to $T$ a positive closed current of bidimension $(p,p)$ on $\rb^n\times\rb^n$ and $U_j=U\ast\chi_j$. If we take $v^{j}_1,...,v^{j}_q$ to be sequences of convex functions which converge locally uniformly respectively to $v_1,...,v_q$. Then, we have:
\begin{enumerate}
\item $(U_j\wedge dd^{\#}v_1\wedge...\wedge dd^{\#}v_q)_j$ converges weakly on $\Omega$, where $j\rightarrow+\infty$. Denote its limit by $U\wedge dd^{\#}v_1\wedge...\wedge dd^{\#}v_q$.
\item If we also assume that $v^{j}_i\geqslant v_i, \forall i=1,...,q$, then:
\begin{itemize}
\item[i)] $U\wedge dd^{\#}v^{j}_1\wedge...\wedge dd^{\#}v^{j}_q\longrightarrow U\wedge dd^{\#}v_1\wedge...\wedge dd^{\#}v_q$ weakly on $\Omega$.
\item[ii)] $U_j\wedge dd^{\#}v^{j}_1\wedge...\wedge dd^{\#}v^{j}_q\longrightarrow U\wedge dd^{\#}v_1\wedge...\wedge dd^{\#}v_q$ weakly on $\Omega$.
\item[iii)] $dd^{\#}(U\wedge dd^{\#}v_1\wedge...\wedge dd^{\#}v_q)=dd^{\#}U\wedge dd^{\#}v_1\wedge...\wedge dd^{\#}v_q$.
\end{itemize}
\end{enumerate}
\end{thm B}
\begin{proof} Since the problem is locally, then we can work on a strictly convex subset $\Omega_0\Subset\Omega$ such that $\Omega_0=\{\psi<0\}$, where $\psi$ is a smooth strictly convex exhaustion function on $\overline{\Omega}_0$. As in Lemma \ref{lmm1}, $R_j$ is with uniformly bounded coefficients on $\Omega_0\times\rb^n$, then there exists $A>0$ such that $dd^{\#}(U_j+A\psi(dd^{\#}\psi)^{n-p-1})$ is positive. Finally, the proof is completed by using \cite{5} and applying Theorem A with $S_j=U_j+A\psi(dd^{\#}\psi)^{n-p-1}$. 
\end{proof}
As an application and in view of Lemma \ref{lmm1}, we obtain:
\begin{cor}\label{cc1} Let $T$ be a positive closed current of bidimension $(p,p)$ on $\rb^n\times\rb^n$, $T_j=T\ast\chi_j$ and $v_0,..,v_q$ are convex functions on $\Omega$ for $1\leqslant q\leqslant p$. If we take $v^{j}_0,...,v^{j}_q$ to be sequences of convex functions which converge locally uniformly respectively to $v_0,...,v_q$ such that $v^{j}_i\geqslant v_i, \forall i=1,...,q$. Then, we have:
\begin{enumerate}
\item $T_j\wedge dd^{\#}v^{j}_1\wedge...\wedge dd^{\#}v^{j}_q\longrightarrow T\wedge dd^{\#}v_1\wedge...\wedge dd^{\#}v_q$ weakly on $\Omega$.
\item $v^{j}_0T_j\wedge dd^{\#}v^{j}_1\wedge...\wedge dd^{\#}v^{j}_q\longrightarrow v_0T\wedge dd^{\#}v_1\wedge...\wedge dd^{\#}v_q$ weakly on $\Omega$.
\end{enumerate}
\end{cor}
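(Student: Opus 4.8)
The plan is to read off the Corollary from Theorem~\ref{T2} applied to the local potential $U=U(\eta,T)$ of $T$, combined with the identity $dd^{\#}U=T+R$ of Lemma~\ref{lmm1}(4). Since the assertions are local, I would fix $\Omega_0\Subset\Omega$; because $\eta\equiv1$ on $\overline\Omega\supseteq\overline{\Omega}_0$, for $j$ large $\Supp\chi_j$ is small enough that $(\eta T)\ast\chi_j=T\ast\chi_j=T_j$ on $\Omega_0$, so Lemma~\ref{lmm1}(4) gives the clean relation $dd^{\#}U_j=T_j+R_j$ on $\Omega_0$ for all large $j$, where $R_j\to R$ in $C^{\infty}_{\mathrm{loc}}(\Omega_0\times\rb^n)$, $dd^{\#}U=T+R$ on $\Omega$, and $R$ is smooth. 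I would also record three soft facts used repeatedly: (a) by the Chern--Levine--Nirenberg estimate of Proposition~\ref{pp3}, all the mixed Hessian products below have locally uniformly bounded mass; (b) the continuity of the mixed Hessian operator on locally bounded convex functions, i.e. $\Psi_j:=dd^{\#}v^{j}_1\wedge\cdots\wedge dd^{\#}v^{j}_q$ converges weakly to $dd^{\#}v_1\wedge\cdots\wedge dd^{\#}v_q$ (the convex case of the theory); and (c) a feature peculiar to the real setting: a locally bounded convex function is continuous, hence by Dini's theorem the decreasing convergences $v^{j}_i\downarrow v_i$ and $v^{j}_0\downarrow v_0$ are uniform on compact sets.

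The core computation proves (2) directly, with (1) the case $v_0\equiv1$. On $\Omega_0$ and for $j$ large, $U_j$ is smooth and $\Psi_j$ is both $d$- and $d^{\#}$-closed, so $dd^{\#}(U_j\wedge\Psi_j)=dd^{\#}U_j\wedge\Psi_j$; combined with $dd^{\#}U_j=T_j+R_j$ this yields
$$v^{j}_0\,T_j\wedge\Psi_j=v^{j}_0\,dd^{\#}(U_j\wedge\Psi_j)-v^{j}_0\,R_j\wedge\Psi_j .$$
By Theorem~\ref{T2}(3), $U_j\wedge\Psi_j\to U\wedge dd^{\#}v_1\wedge\cdots\wedge dd^{\#}v_q$ weakly, hence by continuity of $dd^{\#}$ and Theorem~\ref{T2}(4), $dd^{\#}(U_j\wedge\Psi_j)\to dd^{\#}U\wedge dd^{\#}v_1\wedge\cdots\wedge dd^{\#}v_q$ weakly; moreover this sequence equals $T_j\wedge\Psi_j+R_j\wedge\Psi_j$, which has locally uniformly bounded mass by (a). Since $v^{j}_0\to v_0$ uniformly on compacts, the first term on the right converges weakly to $v_0\,dd^{\#}U\wedge dd^{\#}v_1\wedge\cdots\wedge dd^{\#}v_q$. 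Likewise $v^{j}_0R_j\to v_0R$ uniformly on compacts while $\Psi_j\to dd^{\#}v_1\wedge\cdots\wedge dd^{\#}v_q$ weakly with locally bounded mass, so the second term converges weakly to $v_0\,R\wedge dd^{\#}v_1\wedge\cdots\wedge dd^{\#}v_q$. Subtracting and using $dd^{\#}U=T+R$ on $\Omega$ gives $v^{j}_0\,T_j\wedge\Psi_j\to v_0\,T\wedge dd^{\#}v_1\wedge\cdots\wedge dd^{\#}v_q$ on $\Omega_0$, hence on $\Omega$, which is (2); and (1) is obtained by taking all $v^{j}_0$ and $v_0$ equal to $1$.

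The only elementary ingredient invoked twice above is the following: if $f_j\to f$ uniformly on compacts with $f_j,f$ continuous, and $\nu_j\to\nu$ weakly with $\sup_j\|\nu_j\|_K<\infty$ for every compact $K$, then $f_j\nu_j\to f\nu$ weakly. This follows by writing $\langle f_j\nu_j-f\nu,\varphi\rangle=\langle(f_j-f)\nu_j,\varphi\rangle+\langle\nu_j-\nu,f\varphi\rangle$: the first term is bounded by $\|\varphi\|_\infty\,\|f_j-f\|_{L^\infty(\Supp\varphi)}\,\|\nu_j\|_{\Supp\varphi}\to0$, and the second tends to $0$ because $f\varphi$ is continuous with compact support and weak convergence of order-zero currents of locally bounded mass extends from $C^\infty_c$ to $C^0_c$. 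Note that $\nu_j$ need not be positive here, which is why the signed terms $dd^{\#}U_j\wedge\Psi_j$ and $R_j\wedge\Psi_j$ cause no difficulty.

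The genuine content is Theorem~\ref{T2} (already proved) brought in through the local potential; the rest is bookkeeping. The two steps deserving attention are the identification $(\eta T)\ast\chi_j=T_j$ on $\Omega_0$ for large $j$, without which Lemma~\ref{lmm1}(4) cannot be used in the simple form $dd^{\#}U_j=T_j+R_j$, and the uniform mass control from Proposition~\ref{pp3} feeding the convergence lemma of the previous paragraph. I expect no serious obstacle: in contrast with the complex pluripotential setting, no quasi-continuity or capacity argument is needed to pass the factor $v_0$ to the limit --- continuity of locally bounded convex functions and Dini's theorem make $v^{j}_0\downarrow v_0$ uniform on compacts, and the limit goes through at once.
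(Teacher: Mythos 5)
Your argument is correct, and for item (1) it is essentially the paper's own proof: localize, use Lemma \ref{lmm1}(4) in the form $dd^{\#}U_j=T_j+R_j$ on $\Omega_0$ for $j$ large, pass to the limit via Theorem \ref{T2}(3)--(4) and the continuity of $dd^{\#}$, control $R_j\wedge dd^{\#}v_1^j\wedge\cdots\wedge dd^{\#}v_q^j$ by the uniform convergence $R_j\to R$ together with the Chern--Levine--Nirenberg mass bounds, and subtract. Where you genuinely diverge is item (2). The paper follows the scheme of Theorem \ref{T}(2): it takes an arbitrary weak limit $\Theta$ of $v_0^jT_j\wedge dd^{\#}v_1^j\wedge\cdots\wedge dd^{\#}v_q^j$, shows by regularization (Lemma \ref{lm1}-type comparison) that $\Theta\leqslant v_0T\wedge dd^{\#}v_1\wedge\cdots\wedge dd^{\#}v_q$, and then kills the positive difference by the compact-support argument (Proposition 4.2 of \cite{18}) already used for Theorem \ref{T}. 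You instead observe that in the real setting $v_0$ is a locally bounded convex, hence continuous, function, so Dini's theorem makes $v_0^j\downarrow v_0$ locally uniform, and you conclude by the elementary lemma that a locally uniformly convergent sequence of continuous multipliers times a weakly convergent sequence of order-zero currents with locally uniformly bounded mass converges to the product; the needed mass bounds come from Proposition \ref{pp3} exactly as you say. This is a valid and genuinely simpler route: it bypasses the one-sided comparison and the positivity/compact-support step entirely (and in particular avoids the slightly delicate bookkeeping with the cut-off functions $w_i$ that the paper's scheme inherits from Theorem \ref{T}(2)), at the price of being special to the real/convex situation --- the paper's argument is the one that survives in settings where the multiplier is only upper semicontinuous, as in the complex analogue it is modeled on. One small point to make explicit if you write this up: the identification of the limit as $v_0T\wedge dd^{\#}v_1\wedge\cdots\wedge dd^{\#}v_q$ uses that the current $T\wedge dd^{\#}v_1\wedge\cdots\wedge dd^{\#}v_q$ is (by item (1), i.e. your case $v_0\equiv1$) precisely $dd^{\#}(U\wedge dd^{\#}v_1\wedge\cdots\wedge dd^{\#}v_q)-R\wedge dd^{\#}v_1\wedge\cdots\wedge dd^{\#}v_q$ on $\Omega_0$, so the subtraction step is a definition-chase rather than an extra identity; also your appeal to the closedness of $dd^{\#}v_1^j\wedge\cdots\wedge dd^{\#}v_q^j$ (to write $dd^{\#}(U_j\wedge\Psi_j)=dd^{\#}U_j\wedge\Psi_j$) deserves a one-line justification from the inductive definition of the convex mixed Hessian, which is at the same level of rigor as what the paper itself uses.
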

Notice that Corollary \ref{cc1} is the corresponding result of the one given by \cite{12} in complex analysis for plurisubharmonic functions and closed positive currents. 
\begin{proof} Since the problem is locally, then we can work on a strictly convex subset $\Omega_0\Subset\Omega$ such that $\Omega_0=\{\psi<0\}$, where $\psi$ is a smooth strictly convex exhaustion function on $\overline{\Omega}_0$.

(1)  Let $U$ be the local potential associated to $T$ on $\Omega_0\times\rb^n$ and $U_j=U\ast\chi_j$. By using Theorem B, Lemma \ref{lmm1} and the continuity of the operator $dd^{\#}$, we obtain
$$T_j\wedge dd^{\#}v^{j}_1\wedge...\wedge dd^{\#}v^{j}_q+R_j\wedge dd^{\#}v^{j}_1\wedge...\wedge dd^{\#}v^{j}_q\longrightarrow dd^{\#}U\wedge dd^{\#}v_1\wedge...\wedge dd^{\#}v_q.$$
As $R$ is with uniformly bounded coefficients on $\Omega_0\times\rb^n$, then by \cite{5}, we have $$R\wedge dd^{\#}v^{j}_1\wedge...\wedge dd^{\#}v^{j}_q\longrightarrow R\wedge dd^{\#}v_1\wedge...\wedge dd^{\#}v_q.$$
Then,
$$T_j\wedge dd^{\#}v^{j}_1\wedge...\wedge dd^{\#}v^{j}_q+(R_j-R)\wedge dd^{\#}v^{j}_1\wedge...\wedge dd^{\#}v^{j}_q\longrightarrow T\wedge dd^{\#}v_1\wedge...\wedge dd^{\#}v_q.$$
Since $(R_j-R)_j$ converges uniformly to  $0$ on $\Omega_0\times\rb^n$, we get
$$T_j\wedge dd^{\#}v^{j}_1\wedge...\wedge dd^{\#}v^{j}_q\longrightarrow T\wedge dd^{\#}v_1\wedge...\wedge dd^{\#}v_q.$$

(2) For $q<p$, let $\Theta$ be a weak limit of $(v^{j}_0T_j\wedge dd^{\#}v^{j}_1\wedge...\wedge dd^{\#}v^{j}_q)_j$. Thus, by regularizing $v_0$ and $v^{j}_0$, a simple computation gives $\Theta\leqslant v_0T\wedge dd^{\#}v_1\wedge...\wedge dd^{\#}v_q$. Moreover, in a similar way as in the proof of Theorem A (2)i), the current $E=v_0T\wedge dd^{\#}v_1\wedge...\wedge  dd^{\#}v_q-\Theta$ of bidimension $(p-q,p-q)$ on $\Omega_0\times\rb^n$ is compactly supported and positive. Hence, according to previous argument, we have $dd^{\#}E=0$, and thus $E=0$. In case $q=p$, we place ourselves in $\Omega\times\rb$ and we proceed as the previous case, then we conclude our proof thanks to Fubini's theorem.
\end{proof}
\begin{rem} It should be noted that Corollary \ref{cc1} can be obtained directly without passing by Theorem A and for a general sequence of positive closed currents $(T_j)_j$ converging weakly to $T$. In fact, it suffices to adapt Proposition 3.2 in \cite{18} to our situation. In particular, when $T_j=T$ we recover a result goes back to Lagerberg \cite{5}. 
\end{rem}
\subsection{Lelong number of a local potential}
Let us start by defining the Euclidean sphere and ball of centre $a$ and radius $r$ respectively by
$$\mathbb{S}(a,r)=\{x\in\Omega;\ |x-a|=r\}\qquad\mathrm{and}\qquad \mathbb{B}(a,r)=\{x\in\Omega;\ |x-a|<r\}.$$
Next, for every weakly positive current $T$ of bidimension $(p,p)$ on $\rb^n\times\rb^n$, we define the Lelong number of $T$ at $a$ (when it exists) by $$\nu_T(a)=\lim_{r\rightarrow0}\nu_T(a,r),\ {\rm where}\ \nu_T(a,r)=\frac{1}{r^p}\int_{\mathbb{B}(a,r)\times\mathbb{R}^n}T\wedge\beta^{p}.$$ If $T$ is assumed to be closed, it was proved by \cite{5} that $\nu_T(a)$ exists for any $a$. In analogue with the work of Ghiloufi, Zaway and Hawari \cite{0}, the next result describe a relation between the Lelong number of a positive closed current and the Lelong number of his local potential.
\begin{thm}\label{1} Let $T$ be a weakly positive closed current of bidimension $(p,p)$ on $\rb^n\times\rb^n$ with measure coefficients and $1\leqslant p\leqslant n-1$. Let $\Omega\Subset\rb^n$ and $\eta$ be a smooth and compactly supported function on $\rb^n$ such that $0\leqslant\eta\leqslant1$ and $\eta\equiv1$ on $\overline{\Omega}$. If we assume that the Lelong number of $T$ vanishes at every point of $\Omega$, then the Lelong number of the local potential $U=U(\eta,T)$ at every point of $\Omega$ exists and is equal to zero.
\end{thm}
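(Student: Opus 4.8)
The plan is to push the whole statement down to the trace potential $u$ produced by Proposition~\ref{pro1}. Write $\sigma$ for the trace measure of $T$, i.e. the positive measure on $\rb^n$ with $T\wedge\beta^p=\sigma\,\beta_n$ in the sense of the superintegral, so that the hypothesis ``the Lelong number of $T$ vanishes on $\Omega$'' says precisely $\sigma(\mathbb{B}(a,r))=o(r^p)$ as $r\to0$, for every $a\in\Omega$. With $h(x)=-c_n|x|^{2-n}$, Proposition~\ref{pro1} identifies $u:=\sum_{|I|=n-p-1}U_{II}$ with $\frac{(n-p)(n-1)!}{p!}\,(\eta\sigma)\ast h$, in particular $u\le0$; and the pointwise identity $u\,\beta^n=\frac{n!}{(p+1)!}\,U\wedge\beta^{p+1}$ established in that proof yields, upon integrating over $\mathbb{B}(a,r)\times\rb^n$, $\nu_U(a,r)=\frac{(p+1)!}{r^{p+1}}\int_{\mathbb{B}(a,r)}u\,d\lambda\le0$. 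Thus it suffices to establish
$$\int_{\mathbb{B}(a,r)}(-u)\,d\lambda=o(r^{p+1})\quad\text{as }r\to0,\ \text{for each }a\in\Omega.$$
Since $U$ is neither closed nor positive, $r\mapsto\nu_U(a,r)$ carries no obvious monotonicity, so this one estimate is what gives at once both the \emph{existence} of $\lim_{r\to0}\nu_U(a,r)$ and the value $0$.

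First I would expand $-u(x)=c_{n,p}c_n\int\eta(y)|x-y|^{2-n}\,d\sigma(y)$ with $c_{n,p}=\frac{(n-p)(n-1)!}{p!}$ and apply Fubini, obtaining $\int_{\mathbb{B}(a,r)}(-u)\,d\lambda=c_{n,p}c_n\int_{\rb^n}\eta(y)\,\Phi_r(y)\,d\sigma(y)$ with $\Phi_r(y)=\int_{\mathbb{B}(a,r)}|x-y|^{2-n}\,d\lambda(x)$. Two elementary facts about this Newtonian average are then used: if $|y-a|>r$, the function $x\mapsto|x-y|^{2-n}$ is harmonic near $\overline{\mathbb{B}(a,r)}$, so the mean value property gives $\Phi_r(y)=\omega_n r^n|y-a|^{2-n}$ (with $\omega_n=\mathrm{Vol}\,\mathbb{B}(0,1)$); and if $|y-a|\le r$, then $\mathbb{B}(a,r)\subset\mathbb{B}(y,2r)$, so $\Phi_r(y)\le\int_{\mathbb{B}(0,2r)}|z|^{2-n}\,d\lambda(z)=2\omega_{n-1}r^2$. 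Splitting the $\sigma$-integral along $\{|y-a|=r\}$ then gives
$$\int_{\mathbb{B}(a,r)}(-u)\,d\lambda\le c_{n,p}c_n\Bigl[\,2\omega_{n-1}\,r^2\,\sigma(\mathbb{B}(a,2r))\;+\;\omega_n\,r^n\!\!\int_{\{|y-a|>r\}}\!\!\eta(y)\,|y-a|^{2-n}\,d\sigma(y)\,\Bigr].$$

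It remains to control the two terms. The first is $r^2\cdot o(r^p)=o(r^{p+1})$, directly from the hypothesis. For the second I would set $m(s)=\int_{\{|y-a|<s\}}\eta\,d\sigma$, which is bounded, has $m(0)=0$, and satisfies $m(s)\le\sigma(\mathbb{B}(a,s))=o(s^p)$; an integration by parts (Cavalieri), the boundary contribution at $s=+\infty$ vanishing because $n\ge3$, yields $\int_{\{|y-a|>r\}}\eta(y)|y-a|^{2-n}\,d\sigma(y)\le(n-2)\int_r^{\infty}s^{1-n}m(s)\,ds$. Multiplying by $r^n$ and writing $\int_r^\infty=\int_r^\delta+\int_\delta^\infty$ for a fixed small $\delta$: the tail gives $r^n\cdot O_\delta(1)=o(r^{p+1})$ since $p\le n-1$, while for the inner part one chooses $\delta$ so that $m(s)\le\varepsilon s^p$ on $[0,\delta]$ and then $r^n\int_r^\delta s^{1-n}m(s)\,ds\le\varepsilon\,r^n\int_r^\delta s^{\,p+1-n}\,ds=O(\varepsilon\,r^{p+1})$, with at worst a harmless logarithmic factor in the critical exponent. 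Letting $r\to0$ and then $\varepsilon\to0$ finishes the argument; the same computation also yields uniformity in $a$ on compact subsets of $\Omega$ once the bound $\sigma(\mathbb{B}(a,r))=o(r^p)$ is known uniformly there.

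The step I expect to be the real obstacle is the far-field term $\int_{\{|y-a|>r\}}|y-a|^{2-n}\,d\sigma(y)$: this is where the vanishing of $\nu_T$ is genuinely needed and where the $o(r^p)$ decay of $\sigma$ must be balanced against the $|y-a|^{2-n}$ singularity of the Newtonian kernel, the inequality $p\le n-1$ being exactly what lets the Cavalieri estimate close. The near-field term and the bookkeeping guaranteeing existence of the limit are then routine. As in the complex model of Ghiloufi--Zaway--Hawari \cite{0}, I would expect only the borderline value of the exponent to call for slightly more care.
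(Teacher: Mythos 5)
Your strategy is essentially the paper's own: both proofs reduce, via Proposition~\ref{pro1}, to estimating $\frac{1}{r^{p+1}}\int_{\mathbb{B}(a,r)}\bigl(\int\eta(y)|x-y|^{2-n}d\sigma(y)\bigr)d\lambda(x)$, compute the Newtonian average of the kernel over the ball (exactly $\mathrm{Vol}_n(\mathbb{B}(0,1))\,r^n|y-a|^{2-n}$ when $|y-a|>r$, of order $r^2$ when $|y-a|\leqslant r$), and then exploit $\nu_T(a)=0$ through the radial mass function by a Stieltjes integration by parts. The paper splits the far field at a fixed radius $r_0$ and rewrites everything in terms of $\nu_T(a,t)$, while you split at a fixed $\delta$ with an $\varepsilon$-argument using $m(s)$; these are the same estimate in slightly different clothing, and for $1\leqslant p\leqslant n-2$ your chain of inequalities is correct and yields both existence of the limit and its vanishing, exactly as in the paper.

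The gap is at the endpoint $p=n-1$, which is included in the statement. Your tail estimate asserts that $r^n\cdot O_\delta(1)=o(r^{p+1})$ ``since $p\leqslant n-1$'', but $r^n=o(r^{p+1})$ requires $n>p+1$, i.e.\ $p\leqslant n-2$. For $p=n-1$ the far-field contribution $(n-2)\,r^{n-p-1}\int_\delta^{\infty}s^{1-n}m(s)\,ds$ is a fixed positive constant depending on $\delta$ (and blowing up as $\delta\to0$), independent of $r$, so nothing in your argument forces $\nu_U(a,r)\to0$; in fact for $p=n-1$ the potential $U$ is a $(0,0)$-current which on $\Omega$ coincides, modulo a smooth term, with a convex potential $u$ of $T$, and $\frac{1}{r^{p+1}}\int_{\mathbb{B}(a,r)\times\rb^n}U\wedge\beta^{p+1}$ then converges to a constant multiple of $u(a)$, so this case genuinely cannot be reached by your $o(r^{p+1})$ bound. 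The paper's displayed estimate contains the same factor $r^{n-p-1}$ and likewise only closes for $p\leqslant n-2$; the case $p=n-1$ is disposed of separately in the remark following the statement, via Example~\ref{x1}. Your write-up needs the same carve-out; note also that the ``harmless logarithmic factor'' you mention handles $p=n-2$ correctly, so the genuine borderline is $p=n-1$, not the logarithmic case.
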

If $p=n-1$, Theorem \ref{1} is obvious. Indeed, $T=dd^{\#}u$ for a convex function and by Example \ref{x1}, we can take $U=u$ (modulo a smooth function), so we have the equality between the Lelong numbers of $T$ and $U$.
\begin{proof} Let $x_0\in\Omega$ and $0<r<d(x_0,\partial\Omega)$. The calculus in the proof of Proposition \ref{pro1}, yields
$$\begin{array}{lcl}
-\nu_U(x_0,r)&=&\ds\frac{c_n}{r^{p+1}}\int_{(x,\xi)\in \mathbb{B}(x_0,r)\times\rb^n}\int_{(y,\zeta)\in\rb^n\times\rb^n}\eta(y)T(y,\zeta)\wedge\frac{\beta^{n-1}(x-y,\xi-\zeta)}{|x-y|^{n-2}}\wedge\beta^{p+1}(x,\xi)\\&=&\ds\frac{n!c_n}{r^{p+1}}\int_{(y,\zeta)\in\rb^n\times\rb^n}\eta(y)T(y,\zeta)\wedge\beta^{p}(y,\zeta)\int_{x\in \mathbb{B}(x_0,r)}\frac{1}{|x-y|^{n-2}}d\lambda,
\end{array}$$
where $d\lambda=dx_1\wedge...\wedge dx_n$. Let $d\sigma=\sum_{i=1}^{n}(-1)^{i-1}dx_1\wedge...\wedge dx_{i-1}\wedge dx_{i+1}\wedge...\wedge dx_n$ and $\mathbb{S}=\mathbb{S}(0,1)$, then we have
$$\begin{array}{lcl}
\ds\int_{x\in \mathbb{B}(x_0,r)}\frac{1}{|x-y|^{n-2}}d\lambda&=&\ds\int_{x\in \mathbb{B}(x_0,r)}\frac{1}{|(x-x_0)-(y-x_0)|^{n-2}}d\lambda\\&=&\ds\int_{0}^{r}\int_{\mathbb{S}}\frac{t^{n-1}}{|tu-(y-x_0)|^{n-2}}dtd\sigma(u)\\&=&\ds\int_{0}^{r}t\left(\int_{\mathbb{S}}\frac{d\sigma(u)}{|u-\frac{y-x_0}{t}|^{n-2}}\right)dt\\&=&\ds\int_{0}^{r}t\min\left(1,\left(\frac{t}{|y-x_0|}\right)^{n-2}\right)dt.
\end{array}$$
Assume first that $T$ is smooth, then for $r<r_0<d(x_0,\partial\Omega)$ and $\sigma_T(x_0,t)=\int_{(y,\zeta)\in\mathbb{B}(x_0,t)\times\rb^n}T(y,\zeta)\wedge\beta^{p}(y,\zeta),\ \forall t>0,$ we get
$$\begin{array}{lcl}
-\frac{1}{n!c_n}\nu_U(x_0,r)&=&\ds\frac{1}{r^{p+1}}\int_{(y,\zeta)\in \{|y-x_0|<r\}\times\rb^n}\left(\frac{r^2}{2}-\frac{n-2}{2n}|y-x_0|^2\right)T(y,\zeta)\wedge\beta^{p}(y,\zeta)\\&+&\ds\frac{1}{r^{p+1}}\int_{(y,\zeta)\in \{|y-x_0|>r\}\times\rb^n}\eta(y)\left(\frac{r^n}{n|y-x_0|^{n-2}}\right)T(y,\zeta)\wedge\beta^{p}(y,\zeta)\\&\leqslant&\ds\frac{r}{2}\nu_T(x_0,r)+\frac{r^{n-p-1}}{n}\int_{(y,\zeta)\in \{|y-x_0|>r_0\}\times\rb^n}\frac{\eta(y)}{|y-x_0|^{n-2}}T(y,\zeta)\wedge\beta^{p}(y,\zeta)\\&+&\ds\frac{r^{n-p-1}}{n}\int_{(y,\zeta)\in \{r\leqslant|y-x_0|<r_0\}\times\rb^n}\frac{1}{|y-x_0|^{n-2}}T(y,\zeta)\wedge\beta^{p}(y,\zeta)\\&=&\ds\frac{r}{2}\nu_T(x_0,r)+\frac{r^{n-p-1}}{n}\int_{(y,\zeta)\in \{|y-x_0|>r_0\}\times\rb^n}\frac{\eta(y)}{|y-x_0|^{n-2}}T(y,\zeta)\wedge\beta^{p}(y,\zeta)\\&+&\ds\frac{r^{n-p-1}}{n}\int_{r}^{r_0}\frac{1}{t^{n-2}}d\sigma_T(x_0,t)\\&=&\ds\frac{r}{2}\nu_T(x_0,r)+\frac{r^{n-p-1}}{n}\int_{(y,\zeta)\in \{|y-x_0|>r_0\}\times\rb^n}\frac{\eta(y)}{|y-x_0|^{n-2}}T(y,\zeta)\wedge\beta^{p}(y,\zeta)\\&+&\ds\frac{r^{n-p-1}}{n}\left(\left[\frac{\sigma_T(x_0,t)}{t^{n-2}}\right]_{r}^{r_0}+(n-2)\int_{r}^{r_0}\frac{\sigma_T(x_0,t)}{t^{n-1}}dt\right)
\end{array}$$
$$\begin{array}{lcl}
&=&\ds\frac{r}{2}\nu_T(x_0,r)+\frac{r^{n-p-1}}{n}\int_{(y,\zeta)\in \{|y-x_0|>r_0\}\times\rb^n}\frac{\eta(y)}{|y-x_0|^{n-2}}T(y,\zeta)\wedge\beta^{p}(y,\zeta)\\&+&\ds\frac{r^{n-p-1}}{n}\left(\frac{\nu_T(x_0,r_{0})}{r_{0}^{n-p-2}}-\frac{\nu_T(x_0,r)}{r^{n-p-2}}+(n-2)\int_{r}^{r_0}\frac{\nu_T(x_0,t)}{t^{n-p-1}}dt\right)
\\&\leqslant&\ds\frac{r}{2}\nu_T(x_0,r)+\frac{r^{n-p-1}}{n}\int_{(y,\zeta)\in \{|y-x_0|>r_0\}\times\rb^n}\frac{\eta(y)}{|y-x_0|^{n-2}}T(y,\zeta)\wedge\beta^{p}(y,\zeta)\\&+&\ds\frac{r^{n-p-1}}{nr_{0}^{n-p-2}}\nu_T(x_0,r_{0})-\frac{r}{n}\nu_T(x_0,r)+\frac{n-2}{n}\int_{r}^{r_0}\nu_T(x_0,t)dt.
\end{array}$$
Since $\nu_T(x_0)=0$, the last inequality leads to $\nu_{U}(x_0)=0$. In the case where $T$ is not smooth, it suffices to use the above discussion for a smooth regularization of $T$ and passing to the limit to obtain our result.
\end{proof}
\section{Continuity of $m$-superHessian operator and $m$-generalized Lelong number}
\subsection{$m$-superHessian operator} Let $\Omega$ be a bounded and strictly convex domain of $\rb^n$. According to Theorem B in \cite{22}, there exists a smooth convex exhaustion function $\psi$ on $\Omega$ such that $dd^{\#}(\psi-|x|^2)\geqslant0$. Motivated by the work of Demailly \cite{3} on the complex Monge-Amp\`ere operator for unbounded plurisubharmonic functions, we are going to define the $m$-superHessian operator $T\wedge\beta^{n-m}\wedge dd^{\#}u_1\wedge...\wedge dd^{\#}u_k$, where $T$ is a closed $m$-positive current of bidimension $(p,p)$ on $\Omega\times\rb^n$ such that $T\wedge\beta^{n-m}$ is positive and the functions $u_1,...,u_k$ are $m$-convex and bounded near $\partial\Omega\cap\Supp T$. For this aim, we need the following version of the Chern-Levine-Nirenberg type inequality in the $m$-superformalism setting.
\begin{pro}[Chern-Levine-Nirenberg type inequality]\label{cln} Assume that $T$ is a closed $m$-positive current of bidimension $(p,p)$ such that $n-p\leqslant m\leqslant n$ and $T\wedge\beta^{n-m}$ is positive, and let $u_1,...,u_k$, $k\leqslant m+p-n$ be locally bounded $m$-convex functions and $v$ is an $m$-convex function locally integrable with respect to $T\wedge\beta^p$. Then, for every compact $K$ and open subset $ L$ with $K\subset { L}\Subset\Omega$, there exists a constant $C_{K, L}\geqslant 0$ such that:
\begin{enumerate}
\item $\|T\wedge\beta^{n-m}\wedge dd^{\#}u_1\wedge...\wedge dd^{\#}u_k\|_K\leqslant C_{K, L}\|u_1\|_{L^\infty( L)}...\|u_k\|_{L^\infty( L)}\|T\wedge\beta^{n-m}\|_{ L}.$
\item $\ds\|vT\wedge\beta^{n-m}\wedge dd^{\#}u_1\wedge...\wedge dd^{\#}u_k\|_K\leqslant C_{K, L}\|u_1\|_{L^\infty( L)}...\|u_k\|_{L^\infty( L)}\int_{ L\times\rb^n}|v|T\wedge\beta^{p}$.
\end{enumerate}
\end{pro}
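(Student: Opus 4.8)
The plan is to prove both inequalities by the standard Chern--Levine--Nirenberg induction on $k$, imitating Proposition \ref{pp3} but carrying the factor $\beta^{n-m}$ throughout. The key point is that under the hypotheses $n\geqslant m\geqslant n-p$ and $k\leqslant m+p-n$, the intermediate currents $T\wedge\beta^{n-m}\wedge dd^{\#}u_1\wedge\cdots\wedge dd^{\#}u_j$ are positive (indeed each factor $dd^{\#}u_i$ is $m$-positive and there are at most $m-(n-p)$ of them wedged against $T\wedge\beta^{n-m}$, which is positive of bidimension $(p,p)$, so by the very definition of $m$-positivity of $T$ the product stays positive), so the masses are controlled by integrating against $\beta^{p-j}$ or against powers of $dd^{\#}\psi$.

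For (1), I would first reduce to the single-step estimate $\|T\wedge\beta^{n-m}\wedge dd^{\#}u\|_K\leqslant C_{K,L}\|u\|_{L^\infty(L)}\|T\wedge\beta^{n-m}\|_L$, the general case following by an obvious induction (at each step one of the $u_i$'s is absorbed, and the current $T\wedge\beta^{n-m}\wedge dd^{\#}u_1\wedge\cdots\wedge dd^{\#}u_{j}$ plays the role of the new ``$T$'', which is still closed, still $m$-positive of the appropriate bidimension, and still has a positive wedge with $\beta^{n-m}$). For the single step, choose $\chi\in\mathscr D(L)$ with $0\leqslant\chi\leqslant1$ and $\chi\equiv1$ on $K$; then, using Proposition 4.1 in \cite{5} to bound the mass by an integral against $\beta^{p-1}$, one writes
$$\|T\wedge\beta^{n-m}\wedge dd^{\#}u\|_K\leqslant C_1\int_{L\times\rb^n}\chi\, T\wedge\beta^{n-m}\wedge\beta^{p-1}\wedge dd^{\#}u,$$
integrates by parts twice (legitimate since $T\wedge\beta^{n-m}$ is closed, as $T$ is closed and $\beta$ is closed) to move $dd^{\#}$ onto $\chi$, and bounds $\int u\,T\wedge\beta^{n-m}\wedge\beta^{p-1}\wedge dd^{\#}\chi$ by $C_2\|u\|_{L^\infty(L)}\int_{L\times\rb^n}T\wedge\beta^{n-m}\wedge\beta^p$, which is $\leqslant C_3\|u\|_{L^\infty(L)}\|T\wedge\beta^{n-m}\|_L$ again by Proposition 4.1 in \cite{5}. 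Here $C_3$ depends only on $L$ and on sup-bounds of the coefficients of $dd^{\#}\chi$.

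For (2), the idea is to interpose a slightly larger open set and reduce to (1) applied to the current $vT$, or more directly to estimate $\|vT\wedge\beta^{n-m}\wedge dd^{\#}u_1\wedge\cdots\wedge dd^{\#}u_k\|_K$ by running the same integration-by-parts argument but keeping the (possibly unbounded, yet $T\wedge\beta^p$-integrable) weight $v$ fixed and not differentiating it. Concretely, pick $K\subset L'\Subset L$ and a cutoff $\chi'$; then $\|vT\wedge\beta^{n-m}\wedge dd^{\#}u_1\wedge\cdots\wedge dd^{\#}u_k\|_K$ is dominated by a constant times $\int_{L\times\rb^n}|v|\chi'\,T\wedge\beta^{n-m}\wedge\beta^{p-k}\wedge dd^{\#}u_1\wedge\cdots\wedge dd^{\#}u_k$; pulling the $dd^{\#}$'s onto the product $\chi'(dd^{\#}\psi)^{\,\cdot}$-type test form via repeated integration by parts (using that $vT\wedge\beta^{n-m}$ has the same closedness as needed once we note $v$ can first be replaced by its decreasing smooth regularizations $v\ast\chi_j$ and a limit taken, by monotone convergence, since $v$ is $T\wedge\beta^p$-integrable) leaves an integral of $|v|$ against a positive measure $\leqslant C\,T\wedge\beta^p$, giving the bound $C_{K,L}\|u_1\|_{L^\infty(L)}\cdots\|u_k\|_{L^\infty(L)}\int_{L\times\rb^n}|v|\,T\wedge\beta^p$.

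The main obstacle I anticipate is the bookkeeping around regularizing the unbounded weight $v$ while simultaneously integrating by parts: one must ensure each integration by parts is justified (no boundary terms, which is where the cutoffs and the closedness of $T\wedge\beta^{n-m}$ enter) and that the monotone limit in $v\ast\chi_j\downarrow v$ commutes with all the wedge products — this is exactly the kind of continuity guaranteed by the monotone-convergence behaviour of positive currents together with the $T\wedge\beta^p$-integrability hypothesis on $v$, but it needs to be invoked carefully. Everything else is the routine CLN bookkeeping, where the only genuinely new feature compared to \cite{3} is tracking the fixed factor $\beta^{n-m}$ and checking at each inductive stage that the positivity needed to call Proposition 4.1 in \cite{5} is preserved, which follows directly from the definition of $m$-positivity of $T$ recalled in the Preliminaries.
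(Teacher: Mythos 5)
Your part (1) is essentially the paper's argument: the same cutoff $\chi$, the same appeal to Proposition 4.1 of \cite{5} to control the mass by $\int T\wedge\beta^{p-k}\wedge dd^{\#}u_1\wedge\cdots\wedge dd^{\#}u_k$, one integration by parts to move $dd^{\#}$ onto $\chi$, and induction peeling off one $u_i$ at a time (the paper phrases the induction as reducing $k$ to $k-1$ with $T\wedge\beta^{n-m}\wedge dd^{\#}u_2\wedge\cdots\wedge dd^{\#}u_k$ in the right-hand mass; your wording ``wedge the new $T$ with $\beta^{n-m}$ again'' double-counts $\beta^{n-m}$, but the intended scheme is the same).

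Part (2), however, has a genuine gap. Your two suggested mechanisms do not work as stated. First, ``reduce to (1) applied to the current $vT$'' fails because $vT$ is not closed (and (1) uses closedness precisely to justify the integration by parts with no extra terms), nor is it positive with the right sign structure. Second, ``keeping the weight $v$ fixed and not differentiating it'' is incompatible with the integration by parts you need: to remove $dd^{\#}u_1$ from the measure $(-v)\chi'\,T\wedge\beta^{n-m}\wedge\beta^{p-k}\wedge dd^{\#}u_1\wedge\cdots$ you must move $dd^{\#}$ onto the product $\chi' v$, which produces $\chi'\,dd^{\#}v$ and the sign-uncontrolled cross terms $d\chi'\wedge d^{\#}v$, $dv\wedge d^{\#}\chi'$; since $v$ is unbounded you cannot pull it out in sup norm, and your sketch offers no way to absorb these terms. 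Smoothly regularizing $v$ by $v\ast\chi_j$ does not remove the difficulty: the constants must be uniform in $j$, so the same terms must still be controlled. The paper's proof handles this by a different and essential maneuver: assuming $v\leqslant 0$, it fixes a ball $B=B(x_0,r)\Subset L$ containing $K$, sets $\rho(x)=|x-x_0|^2-r^2$ and truncates $v$ by the \emph{bounded} $m$-convex functions $v_j=\max(v,j\rho)$, which decrease to $v$ on $B$. It then integrates by parts so as to \emph{swap} $v_j$ with $u_1$, i.e.
$$\int_{B\times\rb^n}(-v_j)\,T\wedge\beta^{p-k}\wedge dd^{\#}u_1\wedge\cdots\wedge dd^{\#}u_k=\int_{B\times\rb^n}(-u_1)\,T\wedge\beta^{p-k}\wedge dd^{\#}v_j\wedge dd^{\#}u_2\wedge\cdots\wedge dd^{\#}u_k,$$
bounds $-u_1$ by $\|u_1\|_{L^\infty(L)}$, peels off $u_2,\dots,u_k$ exactly as in (1) to reach $\int_{B\times\rb^n}T\wedge\beta^{p-1}\wedge dd^{\#}v_j$, and then performs one \emph{final} integration by parts (with a cutoff between $B$ and $L$) to convert this back into $C\int_{L\times\rb^n}(-v_j)\,T\wedge\beta^{p}=C\int_{L\times\rb^n}|v_j|\,T\wedge\beta^{p}$, with constants independent of $j$; monotone convergence as $j\rightarrow+\infty$, legitimate because $v$ is integrable with respect to $T\wedge\beta^p$, then yields the stated inequality. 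It is exactly this ``differentiate the truncated weight, then trade $dd^{\#}v_j$ back for $|v_j|$'' loop that is missing from your proposal; without it the unbounded weight cannot be disposed of.
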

Note that for the border case $ m = n $, (1) is reduced to Proposition 3. Moreover, in the case of the trivial current $T=1$, the above inequalities are weaker than the one obtained by \cite{15} Lemma 2.1 and by \cite{21} Theorem 3.1 when $u_1=...=u_k=u$.
\begin{proof} (1) Using induction, it suffices to prove that
$$\|T\wedge\beta^{n-m}\wedge dd^{\#}u_1\wedge...\wedge dd^{\#}u_k\|_K\leqslant C_{K, L}\|u_1\|_{L^\infty( L)}\|T\wedge\beta^{n-m}\wedge dd^{\#}u_2\wedge...\wedge dd^{\#}u_k\|_{ L}.$$
In fact, let $\chi$ be a smooth and compactly supported function on $ L$ such that $0\leqslant\chi\leqslant1$ and $\chi\equiv1$ on $K$. Observe that $T\wedge\beta^{n-m}\wedge dd^{\#}u_1\wedge...\wedge dd^{\#}u_k$ is positive because $T\wedge\beta^{n-m}$ is positive. Thus, Proposition 4.1 in \cite{5} combined with an integration by parts, yield
$$\begin{array}{lcl}
\|T\wedge\beta^{n-m}\wedge dd^{\#}u_1\wedge...\wedge dd^{\#}u_k\|_K&\leqslant&\ds C_1\int_{K\times\rb^n}T\wedge\beta^{p-k}\wedge dd^{\#}u_1\wedge...\wedge dd^{\#}u_k\\&\leqslant&\ds C_1\int_{{ L}\times\rb^n}\chi T\wedge\beta^{p-k}\wedge dd^{\#}u_1\wedge...\wedge dd^{\#}u_k\\&=&\ds C_1\int_{{ L}\times\rb^n}u_1T\wedge\beta^{p-k}\wedge dd^{\#}\chi\wedge dd^{\#}u_2\wedge...\wedge dd^{\#}u_k\\&\leqslant&\ds C_2\|u_1\|_{L^\infty( L)}\|T\wedge\beta^{n-m}\wedge dd^{\#}u_2\wedge...\wedge dd^{\#}u_k\|_{ L},
\end{array}$$
where $C_2$ depends only on bounds of coefficients of $dd^{\#}\chi$ and on the set $ L$.

(2) Let us suppose that $v\leqslant0$ on $ L$ and without loss of generality we may assume that $K\subset\Omega_0\subset L$ and $\Omega_0=\{\psi<0\}$, where $\psi$ is a smooth strictly convex function on $\Omega_0$. Assume also that $-1\leqslant u_j\leqslant0$ on $L$ and $u_j=A\psi$ on $\Omega_0\smallsetminus\Omega_\delta$, where $A$ and $\delta>0$ are fixed and $K\subset\Omega_\delta=\{\psi<-\delta\}$. Next, let $\chi$ be a positive, smooth and compactly supported function on $\Omega_0$ such that $\chi=-\psi$ on $\Omega_\delta$. Then, once again Proposition 4.1 in \cite{5}, an integration by parts and (1), yield
$$\begin{array}{lcl}
& &\|vT\wedge\beta^{n-m}\wedge dd^{\#}u_1\wedge...\wedge dd^{\#}u_k\|_K\leqslant\\&\leqslant&C_1\ds\int_{K\times\rb^n}(-v)T\wedge\beta^{p-k}\wedge dd^{\#}u_1\wedge...\wedge dd^{\#}u_k\\&\leqslant&C_1\ds\int_{\Omega_\delta\times\rb^n}(-v)T\wedge\beta^{p-k-1}\wedge dd^{\#}\psi\wedge dd^{\#}u_1\wedge...\wedge dd^{\#}u_k\\&=&C_1\ds\int_{\Omega_\delta\times\rb^n}vT\wedge\beta^{p-k-1}\wedge dd^{\#}\chi\wedge dd^{\#}u_1\wedge...\wedge dd^{\#}u_k\\&=&C_1\left(\ds\int_{\Omega_0\times\rb^n}\chi T\wedge\beta^{p-k-1}\wedge dd^{\#}v\wedge dd^{\#}u_1\wedge...\wedge dd^{\#}u_k-A^k\int_{\Omega_0\smallsetminus\Omega_\delta\times\rb^n}vT\wedge\beta^{p-1}\wedge dd^{\#}\chi\right)\\&\leqslant&C_2\ds\int_{\Omega_0\times\rb^n}T\wedge\beta^{p-1}\wedge dd^{\#}v+C_3\ds\int_{L\times\rb^n}|v|T\wedge\beta^{p}.
\end{array}$$
Using the same line as in the proof of (1), we obtain
$$\int_{\Omega_0\times\rb^n}T\wedge\beta^{p-1}\wedge dd^{\#}v\leqslant C_4\int_{L\times\rb^n}|v|T\wedge\beta^{p},$$
and our proof is concluded for $k\leqslant p-1$. For $k=p$, we can work on $\Omega\times\rb$ instead of $\Omega$ and consider $v,u_1,...,u_p$ as functions on $\Omega\times\rb$ and $T$ as a current on $(\Omega\times\rb)\times(\rb^n\times\rb)$ independent of the extra variables.
\end{proof}
\begin{thm}\label{pr1} Assume that $u$ is $m$-convex and bounded near $\partial\Omega\cap\Supp T$, where $T$ is a closed $m$-positive current of bidimension $(p,p)$ on $\Omega\times\rb^n$ such that $m+p\geqslant n+1$ and $T\wedge\beta^{n-m}$ is positive. Then, the current $uT\wedge\beta^{n-m}$ has locally finite mass on $\Omega\times\rb^n$.
\end{thm}
Theorem \ref{pr1} fails when $m+p=n$ as shown in the example: $\Omega$ is the unit ball of $\rb^n$, $u(x)=-\frac{1}{(\frac{n}{m}-2)|x|^{\frac{n}{m}-2}}$ if $m<\frac{n}{2}$ and $T=(dd^{\#}u)^{m}$.
\begin{proof} We argue as in \cite{3}. Since $\Omega$ is strictly convex, shrinking it a bit, we may assume that $\Omega=\{\psi<0\}$, where $\psi$ is a smooth strictly convex exhaustion function on $\overline{\Omega}$ such that $dd^{\#}\psi\geqslant\beta$ and $\psi=0$, $d\psi\neq0$ on $\partial\Omega$. We set $\Omega_\lambda=\{x\in\Omega;\ \psi(x)<-\lambda\}$ for all $\lambda>0$. By assumption $u$ is bounded near $\partial\Omega\cap\Supp T$, so we choose a sufficiently small constant $\lambda$ and we fix a neighborhood $ L$ of $(\overline{\Omega}\smallsetminus\Omega_\lambda)\cap\Supp T$ such that $\mathscr L(u)\cap L=\emptyset$, where $\mathscr L(u)$ is the unbounded locus of $u$. For $s<0$, we define the function
$$u_s(x)=\left\lbrace\begin{array}{lll}
\max(u(x),A\psi(x))\quad&\mathrm{on}\quad L&\\
\max(u(x),s)\quad&\mathrm{on}\quad\Omega_\lambda.&
\end{array}\right.$$
Since $u$ is bounded on $ L$, by subtracting a constant to $u$, we may assume that $-\mu\geqslant u\geqslant-\delta$ on $ L$ for some constants $\delta>\mu>0$. Then, the definition of $u_s$ on $ L\cup\Omega_\lambda$ (which is a neighborhood of $\overline{\Omega}\cap\Supp T$) is coherent for $A\geqslant\frac{\delta}{\lambda}$ and $s\leqslant-\delta$. Indeed, it suffices to observe that $$\max(u(x),A\psi(x))=\max(u(x),s)=u(x)\quad\mathrm{on}\  L\cap\Omega_\lambda.$$ Thanks to Proposition 2.2 in \cite{17}, the function $u_s$ is also $m$-convex. Now, it is clear that $u_s=A\psi$ on $ L\cap(\Omega\smallsetminus\Omega_{\frac{\mu}{A}})$, hence Stokes formula implies
$$\begin{array}{lcl}
& &\ds\int_{\Omega\times\rb^n}T\wedge\beta^{n-m}\wedge dd^{\#}u_s\wedge(dd^{\#}\psi)^{p+m-n-1}-\ds\int_{\Omega\times\rb^n}T\wedge\beta^{n-m}\wedge dd^{\#}(A\psi)\wedge(dd^{\#}\psi)^{p+m-n-1}\\& &=\ds\int_{\Omega\times\rb^n}dd^{\#}((u_s-A\psi)T\wedge\beta^{n-m}\wedge(dd^{\#}\psi)^{p+m-n-1})=0,
\end{array}$$
because the current $(u_s-A\psi)T\wedge\beta^{n-m}\wedge(dd^{\#}\psi)^{p+m-n-1}$ has a compact support contained in $\overline{\Omega}_{\frac{\mu}{A}}$. Since $u_s$ and $\psi$ both vanish on $\partial\Omega$, Stokes formula gives
$$\begin{array}{lcl}
\ds\int_{\Omega\times\rb^n}u_sT\wedge\beta^{n-m}\wedge(dd^{\#}\psi)^{p+m-n}&=&\ds\int_{\Omega\times\rb^n}\psi T\wedge\beta^{n-m}\wedge dd^{\#}u_s\wedge(dd^{\#}\psi)^{p+m-n-1}\\&\geqslant&-\|\psi\|_{L^\infty(\Omega)}\ds\int_{\Omega\times\rb^n}T\wedge\beta^{n-m}\wedge dd^{\#}u_s\wedge(dd^{\#}\psi)^{p+m-n-1}\\&=&-A\|\psi\|_{L^\infty(\Omega)}\ds\int_{\Omega\times\rb^n}T\wedge\beta^{n-m}\wedge(dd^{\#}\psi)^{p+m-n}.
\end{array}$$
Finally, take $A=\frac{\delta}{\lambda}$, let $s$ tend to $-\infty$ and use the inequality $u\geqslant-\delta$ on $ L$, to obtain
$$\begin{array}{lcl}
\ds\int_{\Omega\times\rb^n}uT\wedge\beta^{n-m}\wedge(dd^{\#}\psi)^{p+m-n}&\geqslant&-\delta\ds\int_{ L\times\rb^n}T\wedge\beta^{n-m}\wedge(dd^{\#}\psi)^{p+m-n}\\&+&\ds\lim_{s\rightarrow-\infty}\int_{\Omega_\lambda\times\rb^n}u_sT\wedge\beta^{n-m}\wedge(dd^{\#}\psi)^{p+m-n}\\&\geqslant&-(\delta+\|\psi\|_{L^\infty(\Omega)}\frac{\delta}{\lambda})\ds\int_{\Omega\times\rb^n}T\wedge\beta^{n-m}\wedge(dd^{\#}\psi)^{p+m-n}.
\end{array}$$
Since $T\wedge\beta^{n-m}$ is positive, Proposition 3.2 in \cite{5} implies that the last integral is finite. Therefore, $uT\wedge\beta^{n-m}$ has locally finite mass.
\end{proof}
\begin{lem}\label{lm1} Let $(u_k)$ be a decreasing sequence of $m$-convex functions converging pointwise to  $u$ on $\Omega$ and $(\nu_k)$ is a sequence of positive measure converging weakly to $\nu$ on $\Omega$. Then every weak limit $\mu$ of $(u_k\nu_k)$ satisfies $\mu\leqslant u\nu$.
\end{lem}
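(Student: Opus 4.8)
The plan is to adapt the classical argument (Bedford–Taylor, Demailly) showing that multiplication by a decreasing sequence of quasi-plurisubharmonic functions is upper semi-continuous against weakly convergent positive measures; the proof rests entirely on the upper semi-continuity of the $u_k$ together with a Hartogs-type uniform control.

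First I would fix a test function $0\leqslant\chi\in\mathscr D(\Omega)$ and aim to prove $\langle\mu,\chi\rangle\leqslant\langle u\nu,\chi\rangle$; since $\chi$ is arbitrary and nonnegative, this yields $\mu\leqslant u\nu$. Passing to a subsequence, I may assume $u_k\nu_k\rightharpoonup\mu$. The key device is to replace $u_k$ by a continuous majorant: since $u$ is $m$-convex, it is in particular upper semi-continuous, so on the compact $K=\Supp\chi$ there is a decreasing sequence of \emph{continuous} functions $w_l$ with $w_l\downarrow u$ on a neighborhood of $K$ (for instance via the standard regularizations $u\ast\chi_l$, which by property (3) of $m$-convex functions decrease to $u$ and are even smooth). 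Because the $u_k$ decrease to $u$, for every fixed $l$ we have, by a Dini-type/Hartogs argument on the compact set $K$, that $u_k\leqslant w_l+\varepsilon$ on $K$ for $k$ large; more directly, since $u_k\downarrow u\leqslant w_l$ pointwise and the $w_l$ are continuous while $u_k$ is u.s.c., one gets $\limsup_k u_k\leqslant u\leqslant w_l$ uniformly on $K$ is not automatic, so instead I fix $l$ and simply use $u_k\leqslant w_l$ eventually? That is false pointwise; rather one uses: $u_k\le u_{k_0}$ for $k\ge k_0$, and chooses $k_0$ with $u_{k_0}\le w_l+\varepsilon$ on $K$, which \emph{is} available by the Hartogs lemma applied to the decreasing sequence $u_{k}\downarrow u$ against the continuous function $w_l\geqslant u$.

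With this in hand I estimate, for $k\geqslant k_0$,
\[
\langle u_k\nu_k,\chi\rangle \;\leqslant\; \langle (w_l+\varepsilon)\nu_k,\chi\rangle \;=\; \int \chi\,(w_l+\varepsilon)\,d\nu_k .
\]
Now $\chi(w_l+\varepsilon)$ is continuous with compact support, so letting $k\to\infty$ and using $\nu_k\rightharpoonup\nu$ gives $\langle\mu,\chi\rangle\leqslant\int\chi(w_l+\varepsilon)\,d\nu$. Then let $l\to\infty$: by monotone convergence (the $w_l$ decrease to $u$ and $\chi\geqslant0$, while $u$ is $\nu$-integrable from above on $K$ since it is bounded above there) one obtains $\int\chi w_l\,d\nu\to\int\chi u\,d\nu$, hence $\langle\mu,\chi\rangle\leqslant\int\chi u\,d\nu+\varepsilon\,\nu(K)$. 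Finally $\varepsilon\downarrow0$ yields $\langle\mu,\chi\rangle\leqslant\langle u\nu,\chi\rangle$, which is the claim.

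The main obstacle is the uniform estimate $u_{k_0}\leqslant w_l+\varepsilon$ on $K$, i.e.\ the Hartogs lemma: a decreasing sequence of u.s.c.\ functions converging to $u$ converges \emph{uniformly from above on compacta} to any continuous function $\geqslant u$. This is a standard compactness fact (cover $K$ by finitely many balls on which $u_{k_0}<w_l+\varepsilon$, using that $\{u_{k_0}<w_l+\varepsilon\}$ is open and increases to cover $K$ as $k_0\to\infty$), but it is the one place where upper semi-continuity of the $m$-convex functions — rather than merely their membership in $\mathscr C_m$ — is genuinely used; everything else is a routine passage to the limit in the weak topology. One should also note the mild integrability point that $u$, being locally bounded above (it is u.s.c.), is integrable from above against the finite measure $\nu$ on $K$, so the right-hand side $\langle u\nu,\chi\rangle$ is well defined in $[-\infty,+\infty)$ and the monotone convergence step is legitimate.
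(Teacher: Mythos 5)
Your argument is correct: the Dini/Hartogs step you isolate (for $w_l$ continuous with $w_l\geqslant u$, the open sets $\{u_k<w_l+\varepsilon\}$ increase with $k$ and cover the compact $\Supp\chi$, so $u_k\leqslant w_l+\varepsilon$ there for all large $k$) is a true statement and it is exactly what makes the chain $\langle\mu,\chi\rangle\leqslant\int\chi(w_l+\varepsilon)\,d\nu\to\int\chi u\,d\nu+\varepsilon\,C$ legitimate, after which letting $\varepsilon\downarrow0$ gives $\mu\leqslant u\nu$ against every nonnegative test form. Note, however, that the paper gives no proof of its own but refers to Demailly, and Demailly's standard argument is slightly leaner than yours: instead of regularizing the limit $u$ and then invoking a uniform Hartogs-type estimate to dominate $u_k$, one fixes $l$, uses the pointwise monotonicity $u_k\leqslant u_l$ for $k\geqslant l$, and approximates the fixed u.s.c.\ function $u_l$ from above by continuous functions $w_j\downarrow u_l$; then $u_k\nu_k\leqslant w_j\nu_k$ holds pointwise with no $\varepsilon$ and no compactness argument, and the three successive limits $k\to\infty$, $j\to\infty$ (monotone convergence), $l\to\infty$ give $\mu\leqslant u\nu$ directly. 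Both routes use the same two ingredients (continuity of weak convergence against continuous compactly supported functions, and monotone convergence with the integrand bounded above on the support of the test function); yours trades the pointwise inequality $u_k\leqslant u_l$ for a compactness argument, which is fine but not needed.
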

The proof of Lemma \ref{lm1} is matching the one given by Demailly in \cite{3}. Assume that $u_1,...,u_k$ are $m$-convex on $\Omega$ such that each $u_j$ is bounded near $\partial\Omega\cap\Supp T$, where $T$ is a closed $m$-positive current of bidimension $(p,p)$ on $\Omega\times\rb^n$ such that $m+p\geqslant n+1$ and $T\wedge\beta^{n-m}$ is positive. As an immediate consequence of Theorem \ref{pr1} and similarly to the complex Hessian theory made by Dhouib and Elkhadhra \cite{16}, we define by induction the following positive current:
$$T\wedge\beta^{n-m}\wedge dd^{\#}u_1\wedge...\wedge dd^{\#}u_k=dd^{\#}(u_1T\wedge\beta^{n-m}\wedge dd^{\#}u_2\wedge...\wedge dd^{\#}u_k).$$
In fact, it suffices to explain the case $k=1$. Setting $T\wedge\beta^{n-m}\wedge dd^{\#}u_1=dd^{\#}(u_1T\wedge\beta^{n-m})$, we get a positive current. To see the positivity we consider a sequence $(u_1^j)_j$ of smooth and $m$-convex functions that decreases to $u_1$ and observe by the Lebesgue's bounded convergence theorem that the positive sequence of currents $T\wedge\beta^{n-m}\wedge dd^{\#}u_1^j$ converges weakly to $T\wedge\beta^{n-m}\wedge dd^{\#}u_1$. Next, according to Lagerberg \cite{5}, we point out that there is a significant difference between the complex theory and the context of supercurrent. Indeed weakly positive current do not satisfies a trace measure inequality as obtained by Demailly \cite{3} in the complex setting. Hence, the continuity of the $m$-superHessian operator relatively to a given $m$-positive closed current $T$ and under decreasing sequence of $m$-convex functions requires the positivity of the current $T\wedge\beta^{n-m}$. More precisely, we obtain the following main superformalism version:
\begin{thm C} Let $\Omega$ be a bounded and strictly convex domain of $\rb^n$. Assume that $u_0,...,u_k$ are $m$-convex on $\Omega$ such that each $u_s$ is bounded near $\partial\Omega\cap\Supp T$, where $T$ is a closed $m$-positive current of bidimension $(p,p)$ on $\Omega\times\rb^n$ such that $1\leqslant k\leqslant m+p-n$ and $T\wedge\beta^{n-m}$ is positive. Then, if $u^{j}_0,...,u^{j}_k$ are decreasing sequences of $m$-convex functions which converging pointwise and respectively to $u_0,...,u_k$, in the sense of currents, we have:
\begin{enumerate}
\item $u_{0}^{j}T\wedge\beta^{n-m}\wedge dd^{\#}u_{1}^{j}\wedge...\wedge dd^{\#}u_{k}^{j}\longrightarrow u_{0}T\wedge\beta^{n-m}\wedge dd^{\#}u_{1}\wedge...\wedge dd^{\#}u_{k}$, for $k\neq m+p-n$.
\item $T\wedge\beta^{n-m}\wedge dd^{\#}u_{1}^{j}\wedge...\wedge dd^{\#}u_{k}^{j}\longrightarrow T\wedge\beta^{n-m}\wedge dd^{\#}u_{1}\wedge...\wedge dd^{\#}u_{k}$.
\end{enumerate}
\end{thm C}
Notice that Theorem C is the corresponding result in the superformalism setting of Theorem 2 in \cite{16} for the complex hessian theory. On the other hand, it should be noted that when $T=1$ and $k=m$ the second statement of Theorem C was proved by \cite{15} under the weaker hypothesis: for $s=1,...,k$, the sequence $(u^j_s)$ converges weakly to $u_s$. Comparing with Theorem 2.6 in \cite{15}, we see that the first statement of Theorem C was proved for the particular case $T=1$ and $k=m$ but with the hypothesis $u_0$ is locally bounded instead of each $u_s$ is bounded near the boundary.  Moreover, statement (1) fails when $k=m+p-n$ as shown by the following example: $T=1$ and $u_0(x)=...=u_k(x)=-\frac{1}{(\frac{n}{m}-2)|x|^{\frac{n}{m}-2}}$ if $m<\frac{n}{2}$.
\begin{proof} The proof follows the same techniques used by Demailly \cite{3} for plurisubharmonic functions. Observing that $(1)\Rightarrow(2)$, then it suffices to prove $(1)$. Since $T$ is a closed $m$-positive current, $(1)$ is obvious for $k=1$ thanks to Theorem \ref{pr1} and Lebesgue's bounded convergence theorem. Since the sequence $(u_{i}^{j})_j$ is decreasing and since $u_i$ is bounded near $\partial\Omega\cap\Supp T$, the family $(u_{i}^{j})_{j\in\nb}$ is uniformly bounded near $\partial\Omega\cap\Supp T$. Without loss of generality, we can assume that $\Omega=\{\psi<0\}$,  where $\psi$ is a smooth strictly convex exhaustion function on $\overline{\Omega}$ such that $dd^{\#}\psi\geqslant\beta$ and $\psi=0$, $d\psi\neq0$ on $\partial\Omega$. We set $\Omega_\lambda=\{\psi<-\lambda\}$, for all $\lambda>0$. After addition of a constant we can assume that $u_{i}^j\leqslant-1$ on $\overline{\Omega}$. We fix $\lambda$ so small such that $u_{i}^j$ is bounded near $(\overline{\Omega}\smallsetminus\Omega_\lambda)\cap\Supp T$ and we select a neighborhood $ L$ of $(\overline{\Omega}\smallsetminus\Omega_\lambda)\cap\Supp T$ such that $u_{i}^j$ is bounded near $\overline{ L}$. Let
$$v_{i}^j(x)=\left\lbrace\begin{array}{lll}
\max(u_{i}^j(x),A\psi(x))\quad&\mathrm{on}\quad L&\\
u_{i}^j(x)\quad&\mathrm{on}\quad\overline{\Omega}_\lambda\cap\Supp T&
\end{array}\right.,\ \forall i=0,...,k.$$
By construction $u_{i}^j\geqslant-M$ on $ L$ for some constant $M>0$. Set $A=\frac{M}{\lambda}$, then $v_{i}^j$ coincides with $u_{i}^j$ on $ L\cap\Omega_\lambda$ since $A\psi\leqslant-A\lambda=-M$ on $\overline{\Omega}_\lambda\cap\Supp T$ and $v_{i}^j=A\psi$ on $ L\cap(\Omega\smallsetminus\Omega_{\frac{\lambda}{M}})$. It follows by Proposition 2.2 in \cite{17} that each $v_i^j$ is $m$-convex. Denote
by $(u_{i}^{j,\varepsilon})_{0<\varepsilon\leqslant\varepsilon_0}$ an increasing family of regularization functions converging to $u_{i}^j$ as $\varepsilon\rightarrow0$ such that $-M\leqslant u_{i}^{j,\varepsilon}\leqslant-1$ on $ L$. Let
$$v_{i}^{j,\varepsilon}(x)=\left\lbrace\begin{array}{lll}
\max_\varepsilon(u_{i}^{j,\varepsilon}(x),A\psi(x))\quad&\mathrm{on}\quad L&\\
u_{i}^{j,\varepsilon}(x)\quad&\mathrm{on}\quad\overline{\Omega}_\lambda\cap\Supp T&
\end{array}\right.,\ \forall i=0,...,k,$$
where $\max_\varepsilon=\max\ast\rho_\varepsilon$ is a regularized max function. By considering the sequences $v_i^j$ (respectively $v_{i}^{j,\varepsilon}$), we may assume that all $u_{i}^j$ (and similarly all $v_{i}^{j,\varepsilon}$) coincide with $A\psi$ on a fixed neighborhood of $\partial\Omega\cap\Supp T$. Assume that $(1)$ has been proved for $k$, then $$S^j:=T\wedge\beta^{n-m}\wedge dd^{\#}u_{1}^{j}\wedge...\wedge dd^{\#}u_{k}^{j}\longrightarrow S:=T\wedge\beta^{n-m}\wedge dd^{\#}u_{1}\wedge...\wedge dd^{\#}u_{k}.$$
Since $T\wedge dd^{\#}u^{j}_{1}\wedge...\wedge dd^{\#}u^{j}_{k}$ is a closed $m$-positive current and $S^{j}$ is positive, proof of Theorem \ref{pr1} implies that the sequence $(u_{1}^jS^{j})_j$ has locally uniformly bounded mass, hence is relatively compact for the weak topology. In order to prove $(1)$, we only have to show that every weak limit $\Theta$ of $u_{0}^jS^j$ is equal to $u_{0}S$. Let $(q,q)$ be the bidimension of $S$ and let $\alpha$ be an arbitrary smooth compactly supported form of bidegree $(q,0)$. Then, the positive measure $S^j\wedge\sigma_{q}\alpha\wedge J(\alpha)$ converges weakly to $S\wedge\sigma_{q}\alpha\wedge J(\alpha)$ and by Lemma \ref{lm1}, we can see that $\Theta\wedge\sigma_{q}\alpha\wedge J(\alpha)\leqslant u_0S\wedge\sigma_{q}\alpha\wedge J(\alpha)$, which means that $u_0S-\Theta$ is positive. Next, we need to show the inequality
$$\begin{array}{lcl}
& &\ds\int_{\Omega\times\rb^n}u_0T\wedge\beta^{n-m}\wedge dd^{\#}u_{1}\wedge...\wedge dd^{\#}u_{k}\wedge(dd^{\#}\psi)^q\leqslant\\&\leqslant&\ds\liminf_{j\rightarrow+\infty}\int_{\Omega\times\rb^n}u_{0}^{j}T\wedge\beta^{n-m}\wedge dd^{\#}u_{1}^{j}\wedge...\wedge dd^{\#}u_{k}^{j}\wedge(dd^{\#}\psi)^q.
\end{array}$$
As $u_0\leqslant u_{0}^{j}\leqslant u_{0}^{j,\varepsilon_0},\ \forall\varepsilon_0>0$, an integration by parts yields
$$\begin{array}{lcl}
& &\ds\int_{\Omega\times\rb^n}u_0T\wedge\beta^{n-m}\wedge dd^{\#}u_{1}\wedge...\wedge dd^{\#}u_{k}\wedge(dd^{\#}\psi)^q\leqslant\\&\leqslant&\ds\int_{\Omega\times\rb^n}u_{0}^{j,\varepsilon_0}T\wedge\beta^{n-m}\wedge dd^{\#}u_{1}\wedge...\wedge dd^{\#}u_{k}\wedge(dd^{\#}\psi)^q\\&=&\ds\int_{\Omega\times\rb^n}u_{1}T\wedge\beta^{n-m}\wedge dd^{\#}u_{0}^{j,\varepsilon_0}\wedge dd^{\#}u_{2}\wedge...\wedge dd^{\#}u_{k}\wedge(dd^{\#}\psi)^q
\end{array}$$
By repeating this argument with $u_1,...,u_k$, we obtain
$$\begin{array}{lcl}
& &\ds\int_{\Omega\times\rb^n}u_0T\wedge\beta^{n-m}\wedge dd^{\#}u_{1}\wedge...\wedge dd^{\#}u_{k}\wedge(dd^{\#}\psi)^q\leqslant\\&\leqslant&\ds\int_{\Omega\times\rb^n}u_{k}T\wedge\beta^{n-m}\wedge dd^{\#}u_{0}^{j,\varepsilon_0}\wedge...\wedge dd^{\#}u_{k-1}^{j,\varepsilon_{k-1}}\wedge(dd^{\#}\psi)^q\\&\leqslant&\ds\int_{\Omega\times\rb^n}u_{0}^{j,\varepsilon_0}T\wedge\beta^{n-m}\wedge dd^{\#}u_{1}^{j,\varepsilon_1}\wedge...\wedge dd^{\#}u_{k}^{j,\varepsilon_k}\wedge(dd^{\#}\psi)^q.
\end{array}$$
Now, we let $\varepsilon_k\rightarrow0,...,\varepsilon_0\rightarrow0$ in this order, so we have weak convergence at each step and $u_{0}^{j,\varepsilon_0}=0$ on the boundary. Hence, the last integral converges and we get the desired inequality. Therefore, $(u_0S-\Theta)\wedge(dd^{\#}\psi)^q=0$ and since $u_0S-\Theta$ is positive, then thanks to Lagerberg \cite{5} we obtain the equality $u_0S=\Theta$.
\end{proof}
\subsection{$m$-generalized Lelong numbers} Let us first define the pseudo-sphere and the pseudo-ball associated with an $m$-convex function $\varphi$, respectively by
$$S(r)=\{x\in\Omega;\ \varphi(x)=r\}\qquad\mathrm{and}\qquad B(r)=\{x\in\Omega;\ \varphi(x)<r\}.$$
Thanks to the argument before Theorem C, the $m$-superHessian operator $T\wedge\beta^{n-m}\wedge (dd^{\#} \varphi)^{m+p-n}$ is a well defined positive measure provided that $T$ is a closed $m$-positive current of bidimension $(p,p)$ on $\Omega\times\rb^n$ such that $n-p\leqslant m\leqslant n$, $T\wedge\beta^{n-m}$ is positive and $\varphi$ is $m$-convex bounded near $\partial\Omega\cap\Supp T$. Hence, inspired by the work of Elkhadhra \cite{20}, we state the following definition:
\begin{defn}\label{df1} Assume that $1\leqslant m\leqslant\frac{n}{2}$ and $\varphi$ is semi-exhaustive on $\Supp T$, i.e. there exists $R$ such that $B(R)\cap\Supp T\Subset\Omega$, then $S(-\infty)\cap\Supp T$ is compact and therefore for $r<R$, we set
$$\nu_{T}^m(\varphi,r)=\int_{B(r)\times\rb^n}T\wedge\beta^{n-m}\wedge(dd^{\#} \varphi)^{m+p-n}\ {\rm and}\ \nu_{T}^m(\varphi)=\lim_{r\rightarrow-\infty}\nu_{T}^m(\varphi,r).$$
The number $\nu_{T}^m(\varphi)$ will be called the $m$-generalized Lelong number with respect to the weight $\varphi$.
\end{defn}
Observe that $\nu_{T}^m(\varphi)$ measure the asymptotic behaviour of the current $T\wedge\beta^{n-m}\wedge(dd^{\#}\varphi)^{m+p-n}$ near the $m$-polar set $\{\varphi=-\infty\}$. Moreover, this notion is a real Hessian version of the definition of $m$-generalized Lelong number given by Elkhadhra \cite{20} in the complex Hessian setting.
\begin{pro}
For any convex increasing function $\chi: \rb\rightarrow\rb$, we have
\begin{equation}\label{eq01}
\int_{B(r)\times\rb^n}T\wedge\beta^{n-m}\wedge(dd^{\#}\chi\circ\varphi)^{m+p-n}=\left(\chi'(r-0)\right)^{m+p-n}\nu_{T}^m(\varphi,r),
\end{equation}
where $\chi'(r-0)$ is the left derivative of $\chi$ at $r$.
\end{pro}
\begin{proof} According to Trudinger and Wang \cite{21}, the composit function $\chi\circ\varphi$ is $m$-convex. Now, by adapting the same techniques of Demailly \cite{3}, let $\chi_\varepsilon$ be the convex function equal to $\chi$ on $[r-\varepsilon,+\infty[$ and to a linear function of slope $\chi'(r-\varepsilon-0)$ on $[-\infty,r-\varepsilon]$. So, we get $dd^{\#}(\chi_\varepsilon\circ\varphi)=\chi'(r-\varepsilon-0)dd^{\#}\varphi$ on $B(r-\varepsilon)$
and Stokes formula implies
$$\begin{array}{lcl}
\ds\int_{B(r)\times\rb^n}T\wedge\beta^{n-m}\wedge(dd^{\#}\chi\circ\varphi)^{m+p-n}&=&\ds\int_{B(r)\times\rb^n}T\wedge\beta^{n-m}\wedge(dd^{\#}\chi_\varepsilon\circ\varphi)^{m+p-n}\\&\geqslant&\ds\int_{B(r-\varepsilon)\times\rb^n}T\wedge\beta^{n-m}\wedge(dd^{\#}\chi_\varepsilon\circ\varphi)^{m+p-n}\\&=&\ds\chi'(r-0)^{m+p-n}\nu_{T}^m(\varphi,r-\varepsilon).
\end{array}$$
Similarly, taking $\widetilde{\chi}_\varepsilon$ equal to $\chi$ on $[-\infty,r-\varepsilon]$ and linear on $[r-\varepsilon,r]$, we obtain
$$\begin{array}{lcl}
\ds\int_{B(r-\varepsilon)\times\rb^n}T\wedge\beta^{n-m}\wedge(dd^{\#}\chi\circ\varphi)^{m+p-n}&\leqslant&\ds\int_{B(r)\times\rb^n}T\wedge\beta^{n-m}\wedge(dd^{\#}\widetilde{\chi}_\varepsilon\circ\varphi)^{m+p-n}\\&=&\ds\chi'(r-0)^{m+p-n}\nu_{T}^m(\varphi,r).
\end{array}$$
The desired formula follows when $\varepsilon$ tends to $0$.
\end{proof}
\begin{pcs} Assume that $a\in\Omega$ and let $\varphi_m(x) =-\frac{1}{(\frac{n}{m}-2)|x-a|^{\frac{n}{m}-2}}$ if $m\neq\frac{n}{2}$ and $\log|x-a|$ if $m=\frac{n}{2}$. A straightforward computation gives
$$dd^{\#}\varphi_m(x)=\left\lbrace\begin{array}{lll}
\ds|x-a|^{-\frac{n}{m}}\left(\beta-\frac{n}{m}|x-a|^{-2}\frac{1}{2}d|x-a|^2\wedge\frac{1}{2}d^{\#}|x-a|^2\right)\ &\mathrm{if}\quad m\neq\frac{n}{2}&\\
|x-a|^{-2}\left(\beta-2|x-a|^{-2}\frac{1}{2}d|x-a|^2\wedge\frac{1}{2}d^{\#}|x-a|^2\right)\quad&\mathrm{if}\quad m=\frac{n}{2}.&
\end{array}\right.$$
By using the Bin\^ome formula, for $x\not=a$, a simple computation gives
$$\forall s<m,\qquad (dd^{\#}\varphi_m(x))^s\wedge\beta^{n-s}=\left\lbrace\begin{array}{lll}
\left(1-\frac{s}{m}\right)|x-a|^{-\frac{ns}{m}}\beta^{n}\quad&\mathrm{if}\quad m\neq\frac{n}{2}&\\
|x-a|^{-2s}(1-\frac{2s}{n})\beta^n&\mathrm{if}\quad m=\frac{n}{2}.&
\end{array}\right.$$
In particular, we see that $\varphi_m$ is $m$-convex and we have the following two important cases:

(1) For $1\leqslant m<\frac{n}{2}$, put $\mu=1-\frac{n}{2m}$, then by applying formula (\ref{eq01}) for the convex increasing function $\chi$ defined by $\chi(x)=(-x)^{\frac{1}{\mu}}$ if $x\leqslant R$ and $\chi$ is linear on $\{x\geqslant R\}$. Then, for $r<R$, we obtain
$$\int_{B(r)\times\rb^n}T\wedge\beta^{n-m}\wedge\left(dd^{\#}(-\varphi_m)^{\frac{1}{\mu}}\right)^{m+p-n}=\left(\frac{2m}{n-2m}\right)^{m+p-n}(-r)^{\frac{n(m+p-n)}{2m-n}}\nu_{T}^m(\varphi_m,r).$$
For $t>0$, let $r=-\frac{1}{\frac{n}{m}-2}t^{2\mu}$, we have
$$\nu_{T}^m(\varphi_m,r)=\nu_{T}^m\left(\varphi_m,-\frac{1}{\frac{n}{m}-2}t^{2\mu}
\right)=\frac{1}{t^{\frac{n(m+p-n)}{m}}}\ds\int_{\{|x-a|<t\}\times\rb^n}T\wedge\beta^{p}=\frac{p!}{t^{\frac{n(m+p-n)}{m}}}\Theta_T(a,t),$$
here $\Theta_T(a,t)$ is the trace measure of $T$ on the euclidean ball $\mathbb{B}(a,t)$ (see \cite{5}).
Now, for $m=\frac{n}{2}$, we apply formula (\ref{eq01}) for the convex increasing function $\chi$ defined by $\chi(x)=\exp\left(2x\right)$. Then, for $r<R$, we obtain
$$\int_{B(r)\times\rb^n}\left(dd^{\#}\exp\left(2\varphi_m\right)\right)^{m+p-n}\wedge T\wedge\beta^{n-m}=2^{m+p-n}\exp(2r(m+p-n))\nu_{T}^m(\varphi_m,r).$$
For $t>0$, let $r=\log{t}$, we have
$$\nu_{T}^m(\varphi_m,r)=\nu_{T}^m\left(\varphi_m,\log t
\right)=\frac{1}{t^{2(p-\frac{n}{2})}}\ds\int_{\{|x-a|<t\}\times\rb^n}T\wedge\beta^{p}=\frac{p!}{t^{2(p-\frac{n}{2})}}\Theta_T(a,t).$$
Thus, the number
$$\nu_{T}^m(a):=\ds\lim_{t\l 0^+}\frac{p!}{t^{\frac{n(m+p-n)}{m}}}\Theta_T(a,t)$$ exists and it coincides with the $m$-Lelong number of $T$ at $a$ as established by Elkhadhra and Zahmoul \cite{18} in the case where $T$ is $m$-positive and $T\wedge\beta^{p-1}$ is convex.

(2) For $\frac{n}{2}<m\leqslant n$ and according to Trudinger and wang \cite{19}, every $m$-convex function satisfies a local H\"older estimate, and therefore its $m$-polar set is empty. However, in this case and by an adaptation of the proof of Proposition 4.1 in \cite{18}, if we assume that $\varphi$ is a positive $m$-convex function such that $\varphi^\mu$ is $m$-convex, then we have
$$\begin{array}{lcl}
\ds\int_{B(r_1,r_2)\times\mathbb{R}^n}T\wedge\beta^{n-m}\wedge(dd^{\#}{\varphi^\mu})^{m+p-n}&=&\ds\frac{\mu^{m+p-n}}{r_2^{\frac{n}{2m}(m+p-n)}}\int_{B(r_2)\times\mathbb{R}^n}T\wedge\beta^{n-m}\wedge(dd^{\#}\varphi)^{m+p-n}
\\&-&\ds\frac{\mu^{m+p-n}}{r_1^{\frac{n}{2m}(m+p-n)}}\int_{B(r_1)\times\mathbb{R}^n}T\wedge\beta^{n-m}\wedge(dd^{\#}\varphi)^{m+p-n}.
\end{array}$$
In particular, we see that the map $$r\longmapsto \nu_T^m(\varphi,r):=\frac{\mu^{m+p-n}}{r^{\frac{n}{2m}(m+p-n)}}\int_{B(r)\times\mathbb{R}^n}T\wedge\beta^{n-m}\wedge(dd^{\#}\varphi)^{m+p-n}$$
is positive and increases, and therefore we can define the $m$-generalized Lelong number of $T$ relatively to the weight $\varphi$ by setting
$\nu_T^m(\varphi)=\ds\lim_{r\l 0}\nu_T^m(\varphi,r)$. In particular, for $m=n$, $$\nu_T(\varphi):=\lim_{r\rightarrow0}\frac{1}{2^pr^\frac{p}{2}}\int_{B(r)\times\mathbb{R}^n}T\wedge(dd^{\#} \varphi)^{p}$$ exists and it is the Lelong number of $T$ relative to the weight $\varphi$ as defined by \cite{18}. Moreover, if we  assume that $\varphi=|x-a|^2$ and $r=t^2$, then the $m$-superHessian operator $T\wedge\beta^{p}$ is well defined without the condition $T\wedge\beta^{n-m}$ is positive. Hence, as an immediate consequence of the above discussion, the function $$t\longmapsto \frac{p!}{t^{\frac{n(m+p-n)}{m}}}\Theta_T(a,t)$$ is positive and increasing on $]0,+\infty[$. Therefore, the $m$-Lelong number
$\nu_{T}^m(a)$ of $T$ at $a$ exists. In particular, for $m=n$, we recover the Lelong number defined by Lagerberg \cite{5}.
\end{pcs}
\begin{rem}\
\begin{enumerate}
\item As a direct applications of the monotonicity statement and by an adaptation of the proof of Proposition 5.11 in \cite{3}, we see that for each $\delta>0$, the set $\mathscr E_\delta=\{z\in\Omega:\ \nu_T^m(z)\geqslant \delta\}$ is closed and has a locally finite $\mathscr H_{\frac{n(m+p-n)}{m}}$ Hausdorff measure in $\Omega$. Also, by the same lines of the proof of Proposition 4.2 in \cite{18}, we gets that for each compact subset $K$ of $\rb^n$, if ${\mathscr H}_{\frac{n(m+p-n)}{m}}(K\cap\Supp T)=0$, then $\|T\|_K=0$. Finally, for $m=n-p$, it is clear that the $(n-p)$-generalized Lelong number of $T$ at the point $a$ is exactly the trace measure of $T$ on $\{a\}$.
\item As a special case when $T=dd^{\#}u$ and $0<m\leqslant\frac{n}{2}$, where $u$ is an $m$-convex function, we recover the definition of the generalized Lelong number of $u$ with respect to the weight $\varphi$ given by Wan and Wang \cite{17} (modulo a constant).
\item If $T$ is $m$-positive and $(m-1)$-positive (for example when $T=(dd^{\#}u)^p$, $u$ is $m$-convex and bounded near $\partial\Omega$), then $\nu_T^{m-1}(a)=0,\ \forall a\in\Omega$. In particular, if $T$ is a strongly positive closed current, then $T$ is $m$-positive for every $p\leqslant m\leqslant n$, and therefore $\nu_T^{j}(a)=0,\ \forall j=p,...,n-1$ and $a\in\Omega$. Moreover, if $u$ is $m$-convex, then $\nu_{dd^{\#}u}^j(a)=0,\ \forall j=2,...,m-1$ and $a\in\Omega$.
\item Assume that $\Omega=\rb^n$ and $T=1$. Since $p=n$, By Example 4.1 in \cite{5} which gives $\Theta_1(a,t)=t^n Vol_n(\mathbb{B}(a,1))$ and by the above computations, for all $r>0$ and $1\leqslant m\leqslant\frac{n}{2}$, we have
$$\int_{\mathbb{B}(a,r)}(dd^{\#}\varphi_m)^m\wedge\beta^{n-m}=n!Vol_n(\mathbb{B}(a,1)).$$
In particular, we derive the following fundamental formula which is the corresponding of a well-known formula in the complex Hessian theory:
$$(dd^{\#}\varphi_m)^m\wedge\beta^{n-m}=n!Vol_n(\mathbb{B}(a,1))\delta_a,\quad\forall1\leqslant m\leqslant\frac{n}{2}.$$
\end{enumerate}
\end{rem}
Next, we state an analogue of the first Demailly comparison theorem in our setting which is the counterpart of Theorem 1 in \cite{20}. We omit the proof since it is almost identical to the one given by Demailly \cite{3}.
\begin{thm} [Demailly's comparison theorem] Assume that $T$ is a closed $m$-positive current of bidimension $(p,p)$ on $\Omega\times\rb^n$ such that $T\wedge\beta^{n-m}$ is positive. Let $\varphi,\psi\ :\ \Omega\rightarrow[-\infty,+\infty[$ be two continuous $m$-convex functions. We assume that $\varphi,\psi$ are semi-exhaustive on $\Supp T$ and that
$$l:=\limsup{{\psi(x)}\over{\varphi(x)}}<+\infty\ \ {\rm as}\ \ x\in\Supp T\ \ {\rm and}\ \ \varphi(x)\rightarrow -\infty.$$ Then, $\nu_{T}^m(\psi)\leq l^{m+p-n}\nu_{T}^m(\varphi)$, and the equality holds if $l=\lim\frac{\psi}{\varphi}$.
\end{thm}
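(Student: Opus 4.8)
The plan is to isolate two facts --- a scaling identity and a monotonicity property for $\nu_T^m(\cdot)$ with respect to the order on $m$-convex weights --- and then to obtain the theorem by a limiting argument, exactly along the lines of Demailly \cite{3}.

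\emph{The scaling identity.} Since $T\wedge\beta^{n-m}$ is positive and $\varphi,\psi$ are $m$-convex and semi-exhaustive on $\Supp T$, the measures $T\wedge\beta^{n-m}\wedge(dd^{\#}\varphi)^{m+p-n}$ and $T\wedge\beta^{n-m}\wedge(dd^{\#}\psi)^{m+p-n}$ are well defined and positive, by the discussion preceding Theorem \ref{t1}; hence $r\mapsto\nu_T^m(\varphi,r)$ and $r\mapsto\nu_T^m(\psi,r)$ are nondecreasing and $\nu_T^m(\varphi)=\inf_r\nu_T^m(\varphi,r)$, $\nu_T^m(\psi)=\inf_r\nu_T^m(\psi,r)$, so the limits in Definition \ref{df1} exist. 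Applying formula (\ref{eq1}) to the convex increasing function $\chi(t)=ct$, $c>0$, whose left derivative is $c$, and observing that the pseudo-ball of $c\varphi$ at level $cr$ is $\{\varphi<r\}$, I get $\nu_T^m(c\varphi,cr)=c^{m+p-n}\nu_T^m(\varphi,r)$; letting $r\rightarrow-\infty$ yields $\nu_T^m(c\varphi)=c^{m+p-n}\nu_T^m(\varphi)$.

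\emph{Reduction and conclusion.} After subtracting a constant from $\psi$ (which changes neither $l$ nor $\nu_T^m(\psi)$) we may assume $\varphi,\psi\leq-1$ near $\{\varphi=-\infty\}\cap\Supp T$, so that $l\geq0$. Fix any $c>l$. By definition of $l$ there is $R<0$ such that $\psi(x)>c\varphi(x)$ whenever $x\in\Supp T$ and $\varphi(x)<R$; since $\varphi<0$ there, this means $c\varphi\leq\psi$ on a neighbourhood (in $\Supp T$) of $\{\varphi=-\infty\}$, and in particular $\{\psi=-\infty\}\cap\Supp T\cap\{\varphi<R\}\subset\{\varphi=-\infty\}$. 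The core of the argument is then the monotonicity statement, to be proved exactly as in Demailly \cite{3}: if $w_1\leq w_2$ are continuous $m$-convex functions, semi-exhaustive on $\Supp T$, with $w_1\leq w_2$ on a neighbourhood of $\{w_1=-\infty\}\cap\Supp T$, then $\nu_T^m(w_1)\geq\nu_T^m(w_2)$. Granting this with $w_1=c\varphi$ and $w_2=\psi$, the scaling identity gives $\nu_T^m(\psi)\leq\nu_T^m(c\varphi)=c^{m+p-n}\nu_T^m(\varphi)$, and letting $c\downarrow l$ yields $\nu_T^m(\psi)\leq l^{m+p-n}\nu_T^m(\varphi)$. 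If moreover $l=\lim\psi/\varphi$, then for every $c'$ with $0<c'<l$ one has $\psi\leq c'\varphi$ near $\{\varphi=-\infty\}\cap\Supp T$, so the same statement gives $\nu_T^m(\psi)\geq\nu_T^m(c'\varphi)=(c')^{m+p-n}\nu_T^m(\varphi)$; letting $c'\uparrow l$ gives the reverse inequality, hence equality.

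\emph{The main obstacle} is the monotonicity statement. Following Demailly, I would compare the pseudo-balls $\{w_2<r\}$ and $\{w_1<r\}$ at a common, sufficiently negative level $r$ --- using $w_1\leq w_2$ to get $\{w_2<r\}\subset\{w_1<r\}$ and the hypothesis near the polar set to place $\{w_2<r\}\cap\Supp T$ inside $\{w_1<r\}\cap\Supp T$ --- and then pass between the two integrands by repeated integration by parts (Stokes' formula, as in the proof of formula (\ref{eq1})), introducing a regularized $\max$ to glue $w_1$ with a shifted copy of $w_2$ so that the boundary terms cancel. The delicate points, all proper to the superformalism, are: (i) every intermediate current $T\wedge\beta^{n-m}\wedge dd^{\#}(\cdots)\wedge\cdots$ must stay positive at each step, which --- as stressed before Theorem \ref{t1} --- cannot be weakened to mere $m$-positivity of $T$, because weakly positive supercurrents carry no trace-mass estimate; (ii) the glued weight must be checked to be $m$-convex (property (4) of the preliminaries and \c{S}ahin \cite{13}) and to agree with $w_1$, resp. $w_2$, on the appropriate regions up to $\partial\Omega\cap\Supp T$; (iii) the finiteness of the masses involved, which is furnished by Proposition \ref{pr1} and the Chern--Levine--Nirenberg inequality. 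Once the monotonicity statement is secured, the rest is a routine transcription of the complex computation.
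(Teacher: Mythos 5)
Your scaling identity (via formula (4.1) with $\chi(t)=ct$; the paper reads it off Definition 3 directly), the reduction to a fixed $c>l$ followed by $c\downarrow l$, and the equality case by exchanging $\varphi$ and $\psi$ all match the paper's scheme. The genuine gap is that the entire content of the theorem has been displaced into your ``monotonicity statement'', which you do not prove: ``to be proved exactly as in Demailly'', followed by a list of delicate points, is a plan, not an argument, and this is precisely the step where the paper's proof does its work. Worse, in the only form in which you can invoke it, the statement is false. From $c>l$ you obtain $c\varphi\leq\psi$ only on $\{\varphi<R\}\cap\Supp T$, i.e.\ near $\{\varphi=-\infty\}\cap\Supp T$; but a lemma of the form ``$w_1\leq w_2$ near $\{w_1=-\infty\}\cap\Supp T$ implies $\nu_{T}^m(w_1)\geq\nu_{T}^m(w_2)$'' cannot hold, because $\nu_{T}^m(w_2)$ also records the mass of $T\wedge\beta^{n-m}\wedge(dd^{\#}w_2)^{m+p-n}$ near poles of $w_2$ lying outside $\{w_1=-\infty\}$, which $w_1$ does not see: take two weights with poles at distinct points $a\neq b$ of $\Supp T$ at which $T$ has positive $m$-Lelong number (e.g.\ the weights $\varphi_m$ of the Special cases centred at $a$ and at $b$), and multiply the first by a small $\varepsilon>0$; then $w_1\leq w_2$ near $\{w_1=-\infty\}=\{a\}$, while $\nu_{T}^m(w_1)=\varepsilon^{m+p-n}\nu_{T}^m(a)$ can be made smaller than $\nu_{T}^m(w_2)\geq\nu_{T}^m(b)$. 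If instead you demand $w_1\leq w_2$ globally, the lemma becomes reasonable, but it is then not applicable here (you do not have $c\varphi\leq\psi$ away from the polar set), and proving it would require exactly the gluing argument you postpone, so nothing is gained by the reduction.

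The paper avoids any standalone monotonicity lemma by doing the gluing once, at the level of the weight: for $c>0$ it sets $u_c=\max(\psi-c,\varphi)$, which is $m$-convex. Fixing $a<r<R_\varphi$, the function $\psi$ is bounded above on the compact set $\varphi^{-1}([a,r])\cap\Supp T$, so for $c$ large $u_c=\varphi$ there, and Stokes' formula gives $\nu_{T}^m(\varphi,r)=\nu_{T}^m(u_c,r)\geq\nu_{T}^m(u_c)$. On the other hand the hypothesis $l<1$ furnishes $t_0<0$ with $u_c=\psi-c$ on $\{u_c<t_0\}\cap\Supp T$ (on that set $\varphi$ is so negative that $\psi-c\geq l'\varphi-c\geq\varphi$), so the sublevel sets and the $m$-superHessian measures of $u_c$ and of $\psi-c$ coincide there, whence $\nu_{T}^m(u_c)=\nu_{T}^m(\psi-c)=\nu_{T}^m(\psi)$, the last equality because adding a constant merely shifts the radius. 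Letting $r\rightarrow-\infty$ gives $\nu_{T}^m(\psi)\leq\nu_{T}^m(\varphi)$, and the equality case follows by reversing the roles of $\varphi$ and $\psi$. Your own sketch (``glue $w_1$ with a shifted copy of $w_2$ by a regularized max'') is exactly this construction, so the repair is structural: prove the two identities $\nu_{T}^m(\varphi,r)=\nu_{T}^m(u_c,r)$ and $\nu_{T}^m(u_c)=\nu_{T}^m(\psi)$ for $u_c=\max(\psi-c,\varphi)$ directly, instead of routing the argument through a general comparison between two unrelated weights.
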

As a consequence of the comparison theorem, we see that the $m$-generalized Lelong number $\nu_{T}^m(\varphi)$ only depends on the asymptotic behavior of $\varphi$ near $\{\varphi=-\infty\}\cap\Supp T$.
Let $f:\rb^n\rightarrow\rb^m$ be an affine function. Following Lagerberg \cite{5}, the function $f$ can be extended to unique affine application $\tilde{f}:\rb^n\times\rb^n\rightarrow\rb^m\times\rb^m$ such that $\tilde{f}\circ J=J\circ\tilde{f}$. If $T$ is a current of bidimension $(p,p)$ on $\rb^n\times\rb^n$ and $f$ is proper on $\Supp T$, then the direct image of $T$ by $\tilde f$ is a current of bidimension $(p,p)$ on $\rb^m\times\rb^m$ noted $f_\ast T$ and defined by
\begin{equation}\label{e3}
\langle f_\ast T,\alpha\rangle=\langle T,f^\ast\alpha\rangle,
\end{equation}
for every compactly supported form $\alpha$ of bidegree $(p,p)$ on $\rb^m\times\rb^m$. The definition makes sense, because $\Supp T\cap f^{-1}(\Supp\alpha)$ is compact. Moreover, thanks to Lagerberg \cite{5}, if $T$ is (weakly) positive, then $f_\ast T$ is also (weakly) positive. Our aim now is to find a relationship between the Lelong number of an $m$-positive current and the Lelong number of his direct image by a projection. Strongly inspired by the work of Demailly \cite{3} on the same subject in the complex setting, we have proven the following result.
\begin{pro} Let $\pi$ be the projection of $\rb^n$ on $\rb^{n-k}$ and $T$ be an $m$-positive closed current of bidimension $(p,p)$ on $\rb^n\times\rb^n$ such that $T\wedge\beta^{n-m}$ is positive, $1\leqslant k\leqslant\inf(m-1,n-m)$ and $\pi_\ast T$ is $(m-k)$-positive closed on $\rb^{n-k}\times\rb^{n-k}$. Let $\psi$ be an $m$-convex function on $\rb^{n-k}$ which is semi-exhaustive on $\Supp \pi_\ast T$, i.e. there exists $R$ such that $B(R)\cap \Supp\pi_\ast T\Subset\Omega$. Then, $\varphi=\psi\circ\pi$ is $m$-convex on $\rb^n$ and semi-exhaustive on $\Supp T$, and we have
$$\int_{\lbrace\varphi<r\rbrace\times\rb^{n}}T\wedge(dd^{\#}\varphi)^{m+p-n}\wedge\beta^{n-m}=\int_{\lbrace\psi<r\rbrace\times\rb^{n-k}}\pi_\ast T\wedge(dd^{\#}\psi)^{(m-k)+p-(n-k)}\wedge\beta'^{(n-k)-(m-k)},$$
for all $r<R$, where $\beta'=\frac{1}{2}dd^{\#}|\pi(x)|^2,\ \forall x\in\rb^n$. In particular, we have $\nu^m_{\pi_\ast T}(\psi)=\nu^{m-k}_T(\psi\circ\pi)$.
\end{pro}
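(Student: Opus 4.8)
The plan is to prove the integral identity for each $r$ in the relevant range; the stated equality of $m$-generalized Lelong numbers then follows by letting $r$ tend to the semi-exhaustion threshold and using Definition~\ref{df1}. I would first dispose of the preliminaries. Write $\tilde\pi$ for the affine lift of $\pi$ to $\rb^n\times\rb^n$, so that $dd^{\#}\varphi=\tilde\pi^{\ast}(dd^{\#}\psi)$, and set $\beta''=\beta-\beta'=\frac12 dd^{\#}\big(|x|^2-|\pi(x)|^2\big)$ for the ``vertical'' K\"ahler form. That $\varphi=\psi\circ\pi$ is $m$-convex amounts to checking that $\tilde\pi^{\ast}$ carries $m$-positive $(1,1)$-forms on $\rb^{n-k}$ to $m$-positive $(1,1)$-forms on $\rb^n$: expanding $\beta^{n-j}=(\beta'+\beta'')^{n-j}$ and using the positivity properties of $m$-positive forms established by \c{S}ahin \cite{13} and Elkhadhra--Zahmoul \cite{18}, one gets $(\tilde\pi^{\ast}\alpha)^{j}\wedge\beta^{n-j}\geqslant0$ for every $m$-positive $(1,1)$-form $\alpha$ on $\rb^{n-k}$ and $j\leqslant m\leqslant n-k$; applying this to a regularizing sequence $\psi_i\downarrow\psi$ of smooth $m$-convex functions and passing to the limit gives the claim. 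Semi-exhaustiveness of $\varphi$ on $\Supp T$ follows from the properness of $\pi$ on $\Supp T$ together with the hypothesis on $\psi$. Finally, both sides of the asserted identity are the partial masses $\nu_{T}^{m}(\varphi,r)$ and $\nu_{\pi_\ast T}^{m-k}(\psi,r)$ of Definition~\ref{df1}, hence nondecreasing in $r$, so it suffices to treat one value of $r$ at a time.

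The heart of the argument is a projection-formula computation, performed after a regularization. I would replace $T$ by $T_j=T\ast\chi_j$ --- still closed, $m$-positive, with $T_j\wedge\beta^{n-m}\geqslant0$ because $\chi_j\geqslant0$ --- and $\psi$ by the smooth $m$-convex $\psi_i\downarrow\psi$, so that $T_j\wedge\beta^{n-m}\wedge(dd^{\#}(\psi_i\circ\pi))^{m+p-n}$ and its analogue on $\rb^{n-k}$ converge to the desired currents by Theorem~\ref{t1}, while $\pi_\ast(T_j)$ is, up to the induced convolution downstairs, a regularization of $\pi_\ast T$ (here one uses that $\pi$ is linear, so that $\pi_\ast$ commutes with convolution once the mollifier is integrated over the fibre). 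For smooth data, $(dd^{\#}\varphi)^{m+p-n}=\tilde\pi^{\ast}\big((dd^{\#}\psi)^{m+p-n}\big)$ is a pullback and $\{\varphi<r\}=\pi^{-1}\{\psi<r\}$, so the projection formula~\eqref{e3} gives
$$\int_{\{\varphi<r\}\times\rb^n}T\wedge\beta^{n-m}\wedge(dd^{\#}\varphi)^{m+p-n}=\Big\langle\pi_\ast\big(T\wedge\beta^{n-m}\big),\ \mathbf{1}_{\{\psi<r\}}(dd^{\#}\psi)^{m+p-n}\Big\rangle.$$
Since, again by pullback, $\beta'^{\,n-m}\wedge(dd^{\#}\varphi)^{m+p-n}$ is $\tilde\pi^{\ast}$ of the corresponding form on $\rb^{n-k}\times\rb^{n-k}$, the right-hand side of the Proposition equals $\int_{\{\varphi<r\}\times\rb^n}T\wedge\beta'^{\,n-m}\wedge(dd^{\#}\varphi)^{m+p-n}$, and the whole statement reduces to the vanishing of the vertical correction
$$\int_{\{\varphi<r\}\times\rb^n}T\wedge\big(\beta^{n-m}-\beta'^{\,n-m}\big)\wedge(dd^{\#}\varphi)^{m+p-n}=0.$$

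For the correction term I would write $\beta^{n-m}-\beta'^{\,n-m}=dd^{\#}\rho\wedge G$, where $\rho=\frac12\big(|x|^2-|\pi(x)|^2\big)$ depends only on the fibre variables and $G=\sum_{b\geqslant1}\binom{n-m}{b}(\beta'')^{\,b-1}\wedge\beta'^{\,n-m-b}$ is closed. As $T$ is symmetric and closed --- hence also $d^{\#}$-closed --- and $G$ and $(dd^{\#}\varphi)^{m+p-n}$ are closed, the integrand is exact:
$$T\wedge\big(\beta^{n-m}-\beta'^{\,n-m}\big)\wedge(dd^{\#}\varphi)^{m+p-n}=dd^{\#}\Big(\rho\,T\wedge G\wedge(dd^{\#}\varphi)^{m+p-n}\Big).$$
Since $\{\varphi<r\}$ is a cylinder over $\{\psi<r\}$, $\rho$ lives on the fibre, and $\Supp T$ is proper over the base, integrating this exact form over $\{\varphi<r\}$ and applying Stokes --- carrying out the fibre integration first, where $T$ has compact support, and using the closedness of $T$ to annihilate the residual boundary and derivative terms --- gives $0$. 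Undoing the regularizations yields the identity, and letting $r$ decrease gives the asserted equality of Lelong numbers (when $m+p=n$ this is simply the compatibility of trace masses with direct images). The two steps I expect to be delicate are: (i) the rigorous vanishing of the $dd^{\#}$-exact vertical correction against the sublevel set --- this is the genuinely new point compared with Demailly's complex computation, where no K\"ahler factor intervenes; and (ii) arranging the simultaneous regularization so that Theorem~\ref{t1} applies coherently on $\rb^n$ and on $\rb^{n-k}$, in particular that the direct image of a positive closed supercurrent behaves well under convolution.
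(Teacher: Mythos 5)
Your preliminary steps and your reduction are faithful to the paper: the pullback positivity of $\pi^{\ast}\alpha$ via the binomial expansion of $\beta^{n-s}=(\beta'+\beta'')^{n-s}$, the semi-exhaustiveness of $\varphi=\psi\circ\pi$, and the projection-formula identification of the right-hand side with $\int_{\{\varphi<r\}\times\rb^n}T\wedge\beta'^{\,n-m}\wedge(dd^{\#}\varphi)^{m+p-n}$ are exactly what the paper does with (\ref{e3}) and a smoothing of the indicator function. The genuine gap is the step you yourself flag as delicate: the vanishing of the vertical correction $\int_{\{\varphi<r\}\times\rb^n}T\wedge\bigl(\beta^{n-m}-\beta'^{\,n-m}\bigr)\wedge(dd^{\#}\varphi)^{m+p-n}$. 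Your Stokes argument does not close it. Writing the integrand as $dd^{\#}\bigl(\rho\,T\wedge G\wedge(dd^{\#}\varphi)^{m+p-n}\bigr)$ and integrating over $\{\varphi<r\}$ leaves a flux term on $\{\varphi=r\}$, which is the cylinder $\{\psi=r\}\times\rb^{k}$ and meets $\Supp T$; since $\rho$ depends only on the fibre variables, it is neither constant nor small near that boundary, and none of the usual sublevel-set devices (replacing $\varphi$ by $\max(\varphi,s)$, etc.) applies to it. Integrating along the fibres first does not help: after fibre integration one is left with a base $dd^{\#}$ of a function of the base variable alone, and its integral over $\{\psi<r\}$ is precisely a boundary term on $\{\psi=r\}$ with no reason to vanish.

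In fact the vanishing you need cannot follow from closedness, $m$-positivity, positivity of $T\wedge\beta^{n-m}$ and properness alone, which is all your computation uses. Your argument applies verbatim when $m+p=n$ (then $(dd^{\#}\varphi)^{m+p-n}=1$, which is closed), and there the conclusion fails: take $n=2$, $k=1$, $m=p=1$, $u(x_1,x_2)=e^{2x_1}\chi(x_2)$ with $\chi(t)=\cos^{3}t$ for $|t|\leqslant\pi/2$ and $\chi=0$ otherwise, and $T=dd^{\#}u$. Then $u$ is $\mathrm{C}^2$, $\Delta u=e^{2x_1}\cos x_2(\cos^{2}x_2+6\sin^{2}x_2)\geqslant0$, so $T$ is closed, symmetric, $1$-positive with $T\wedge\beta\geqslant0$, and $\Supp T\subset\rb\times[-\pi/2,\pi/2]$ is proper over the base; moreover $\pi_{\ast}T$ has density $\int u_{x_2x_2}\,dx_2=e^{2x_1}\int\chi''=0$, so $\pi_{\ast}T=0$ is closed and trivially positive, and $\psi(y)=y^{2}-1$ is convex and semi-exhaustive on its empty support. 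Yet the left-hand side $\int_{\{\varphi<0\}\times\rb^2}T\wedge\beta=\bigl(\int_{-1}^{1}e^{2x_1}dx_1\bigr)\cdot4\int\chi>0$ while the right-hand side is $0$; your correction term is exactly this nonzero quantity, namely $\int_{\{\varphi<r\}\times\rb^2}T\wedge\beta''=\int u_{x_1x_1}$. So no rearrangement of the exactness/Stokes computation can give the vanishing in general; some additional hypothesis on the fibre components of $T$ is needed. For what it is worth, the paper's own proof passes over the very same point silently: in its chain of equalities the pullback $\pi^{\ast}(\beta'^{\,n-m})$, which equals $\beta'^{\,n-m}$ and not $\beta^{n-m}$, is replaced by $\beta^{n-m}$ without justification. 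You have therefore correctly isolated the crux of the statement, but your proposed resolution does not work.
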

\begin{proof} If $\alpha$ is an $m$-positive form of bidegree $(1,1)$ on $\rb^{n-k}$, then $\pi^\ast\alpha$ is also an $m$-positive form on $\rb^{n}$. Indeed, for $s\leqslant m$,
$$(\pi^\ast\alpha)^s\wedge\beta^{n-s}=(\pi^\ast\alpha)^s\wedge(\beta'+\beta'')^{n-s}=(\pi^\ast\alpha)^s\wedge\beta'^{n-k-s}\wedge\beta''^k\geqslant 0.$$
Assume first that $\psi$ is smooth. In order to prove that $\varphi=\psi\circ\pi$ is $m$-convex, it suffices to prove that $dd^{\#}\varphi$ is $m$-positive. Since $dd^{\#}\psi$ is $m$-positive and since $\pi^\ast$ commute with $d$ and $d^{\#}$ (see \cite{5}), then $dd^{\#}\varphi=dd^{\#}(\psi\circ\pi)=dd^{\#}(\pi^\ast\psi)=\pi^\ast(dd^{\#}\psi)$ is $m$-positive thanks to the above discussion.
For $r<R$, we have
$$\begin{array}{lcl}
& &\ds\int_{\lbrace\psi<r\rbrace\times\rb^{n-k}}\pi_\ast T\wedge(dd^{\#}\psi)^{(m-k)+p-(n-k)}\wedge\beta'^{(n-k)-(m-k)}=\\&=&\ds\int_{\rb^{n-k}\times\rb^{n-k}}\pi_\ast T\wedge{\1}_{\lbrace\psi<r\rbrace}(dd^{\#}\psi)^{m+p-n}\wedge\beta'^{n-m}\\&=&\ds\int_{\rb^n\times\rb^n}T\wedge\pi^\ast({\1}_{\lbrace\psi<r\rbrace}(dd^{\#}\psi)^{m+p-n}\wedge\beta'^{n-m})\\&=&\ds\int_{\rb^{n}\times\rb^{n}}T\wedge({\1}_{\lbrace\psi<r\rbrace}\circ\pi)(dd^{\#}\pi^\ast(\psi))^{m+p-n}\wedge\beta^{n-m}\\&=&\ds\int_{\rb^{n}\times\rb^{n}}T\wedge{\1}_{\lbrace\psi\circ\pi<r\rbrace}(dd^{\#}(\psi\circ\pi))^{m+p-n}\wedge\beta^{n-m}\\&=&\ds\int_{\lbrace\varphi<r\rbrace\times\rb^{n}}T\wedge(dd^{\#}\varphi)^{m+p-n}\wedge\beta^{n-m}.
\end{array}$$
This follows almost immediately from the equality (\ref{e3}) when $\psi$ is smooth and when we write ${\1}_{\lbrace\psi<r\rbrace}$ as the limit of an increasing sequence of smooth functions. In
general, if $\psi$ is not necessarily smooth, we take a smooth regularization $(\psi_j)_j$ of $\psi$. Then, the equality is hold for $\varphi_j$ and $\psi_j$. Thanks to Theorem C, the associated superHessian operators converge to measures which can be considered with compact supports because our functions are semi-exhaustive. So, by passing to the limit it is not difficult to deduce the equality for $\varphi$ and $\psi$. The later statement of Proposition 6 is obtained by sending $r\rightarrow-\infty$.
\end{proof}
\section{Weighted relative extremal function and weighted $m$-Hessian capacity}
Let $\Omega$ be an open bounded subset of $\rb^n$ and denote by $\mathscr{C}^{-}_m(\Omega)$ the set of all negative $m$-convex functions on $\Omega$. In this section we begin by introducing analogously to the complex Hessian setting (see Lu \cite{27}) two classes of Cegrell-type $\mathcal{E}_{m}^0$ and $\mathcal{F}_m$.  After investigating some properties of these classes and by using ideas of Nguyen \cite{25} from pluripotential theory in $\cb^n$, we introduce the weighted extremal function as well as the weighted $m$-Hessian capacity in the real Hessian context. The aim is to generalize the notions of $m$-Hessian capacity and extremal function studied by Trudinger and Wang \cite{15} and Labutin \cite{26}. Now, in a similar way as in \cite{27} and \cite{28}, we define:
$$\mathcal{E}_{m}^0(\Omega)=\left\{u\in\mathscr{C}^{-}_m(\Omega)\cap L^\infty(\Omega);\ {\ds\lim_{x\rightarrow\partial\Omega}}u(x)=0,\ \int_{\Omega\times\rb^n}(dd^{\#}u)^m\wedge\beta^{n-m}<+\infty\right\},$$
$$\mathcal{F}_m(\Omega)=\left\{u\in\mathscr{C}^{-}_m(\Omega);\ \exists(u_j)_j\subset\mathcal{E}_{m}^0(\Omega),\ u_j\downarrow u,\ \sup_j\int_{\Omega\times\rb^n}(dd^{\#}u_j)^m\wedge\beta^{n-m}<+\infty\right\}.$$
Note that $\mathcal{E}_{m}^0(\Omega)$ is a subclass of the one defined and used by Wan \cite{31} for proper $m$-convex functions in order to prove several estimates for the mixed $m$-Hessian operator. According to \cite{19} and \cite{15} it was proved that for every bounded functions $g,h\in\mathscr{C}_m(\Omega)$,  we have:
\begin{equation}\label{eq1}
g=h\ \mathrm{on}\ \partial\Omega\ \mathrm{and}\ g\leqslant h\ \mathrm{in}\ \Omega\Rightarrow\int_{\Omega\times\rb^n}(dd^{\#}h)^m\wedge\beta^{n-m}\leqslant\int_{\Omega\times\rb^n}(dd^{\#}g)^m\wedge\beta^{n-m}.
\end{equation}
For the discontinuous functions, the equality $g=h$ is understood in the sense of limit. The following proposition was given by Cegrell [4] for negative plurisubharmonic functions and extended by Lu \cite{27} for $m$-subharmonic functions.
\begin{pro}\label{pr11}\
\begin{enumerate}
\item $\mathcal{E}_{m}^0(\Omega)\subset\mathcal{F}_m(\Omega)\subset\mathscr{C}^{-}_m(\Omega)$.
\item If $u\in\mathcal{F}_m(\Omega)$, then $\ds\int_{\Omega\times\rb^n}(dd^{\#}u)^m\wedge\beta^{n-m}<+\infty$.
\item The class $\mathcal{E}_{m}^0(\Omega)$ (as well as $\mathcal{F}_m(\Omega)$) is convex cone.
\item If $\varphi\in\mathcal{E}^{0}_m(\Omega)$ (respectively in $\mathcal{F}_m(\Omega)$) and $\psi\in\mathscr{C}^{-}_m(\Omega)$, then $\max(\varphi,\psi)\in\mathcal{E}^{0}_m(\Omega)$ (respectively in $\mathcal{F}_m(\Omega)$).
\end{enumerate}
\end{pro}
\begin{proof} It is clear that (1) and (2) are immediate from the definitions and Theorem 2.4 in \cite{15}. So, all we need is to prove (3) and (4). It is not hard to see that if $\varphi\in\mathcal{E}^{0}_m(\Omega)$ then $\alpha\varphi\in\mathcal{E}^{0}_m(\Omega)$, $\forall\alpha\in\rb^+$. Now, if $\varphi,\psi\in\mathcal{E}^{0}_m(\Omega)$, then $$\int_{\Omega\times\rb^n}(dd^{\#}(\varphi+\psi))^m\wedge\beta^{n-m}=\int_{\{\varphi<\psi\}\times\rb^n}(dd^{\#}(\varphi+\psi))^m\wedge\beta^{n-m}+\int_{\{\psi\leqslant\varphi\}\times\rb^n}(dd^{\#}(\varphi+\psi))^m\wedge\beta^{n-m}.$$
We have $\{\varphi<\psi\}\subset\{2\varphi<\varphi+\psi\}$, then by (\ref{eq1}), we obtain
$$\begin{array}{lcl}
\ds\int_{\{\varphi<\psi\}\times\rb^n}(dd^{\#}(\varphi+\psi))^m\wedge\beta^{n-m}&\leqslant&\ds\int_{\{2\varphi<\varphi+\psi\}\times\rb^n}(dd^{\#}(\varphi+\psi))^m\wedge\beta^{n-m}\\&\leqslant&\ds\int_{\{2\varphi<\varphi+\psi\}\times\rb^n}(dd^{\#}(2\varphi))^m\wedge\beta^{n-m}\\&=&2^m\ds\int_{\{2\varphi<\varphi+\psi\}\times\rb^n}(dd^{\#}\varphi)^m\wedge\beta^{n-m}\\&\leqslant&2^m\ds\int_{\Omega\times\rb^n}(dd^{\#}\varphi)^m\wedge\beta^{n-m}<+\infty.
\end{array}$$
Let $\lambda>1$, then $\lambda\psi<\psi$ and $\{\psi\leqslant\varphi\}\subset\{\lambda\psi<\varphi\}\subset\{(\lambda+1)\psi<\varphi+\psi\}$. Hence, the implication (\ref{eq1}) implies that
$$\int_{\{\psi\leqslant\varphi\}\times\rb^n}(dd^{\#}(\varphi+\psi))^m\wedge\beta^{n-m}\leqslant(\lambda+1)^m\int_{\Omega\times\rb^n}(dd^{\#}\psi)^m\wedge\beta^{n-m}<+\infty.$$
Thus, $$\int_{\Omega\times\rb^n}(dd^{\#}(\varphi+\psi))^m\wedge\beta^{n-m}<+\infty,$$ and then $\varphi+\psi\in\mathcal{E}^{0}_m(\Omega)$. Finally, for $\varphi\in\mathcal{E}^{0}_m(\Omega)$, $\psi\in\mathscr{C}^{-}_m(\Omega)$ and $\lambda>1$, the implication (\ref{eq1}) yields
$$\begin{array}{lcl}
\ds\int_{\Omega\times\rb^n}(dd^{\#}\max(\varphi,\psi))^m\wedge\beta^{n-m}&=&\ds\int_{\{\lambda\varphi<\max(\varphi,\psi)\}\times\rb^n}(dd^{\#}\max(\varphi,\psi))^m\wedge\beta^{n-m}\\&\leqslant&\lambda^m\ds\int_{\{\lambda\varphi<\max(\varphi,\psi)\}\times\rb^n}(dd^{\#}\varphi)^m\wedge\beta^{n-m}\\&=&\lambda^m\ds\int_{\Omega\times\rb^n}(dd^{\#}\varphi)^m\wedge\beta^{n-m}<+\infty,
\end{array}$$
and then $\max(\varphi,\psi)\in\mathcal{E}^{0}_m(\Omega)$. For the other class $\mathcal{F}_m(\Omega)$, the properties (3) and (4) can be proved by repeating almost the same arguments as above.
\end{proof}
\begin{lem} Let $u\in\mathscr{C}^{-}_m(\Omega)\cap\mathcal{C}(\Omega)$, $K\Subset\Omega$ and 
$$u_K=\sup\{v\in\mathscr{C}^{-}_m(\Omega);\ v\leqslant u\ {\rm on}\ K\}.$$
Then, $\Supp (dd^{\#}\overline{u}_K)^m\wedge\beta^{n-m}\subset\overline{K}$, where $\overline{u}_K$ is the upper semi-continuous regularization of $u_K$.
\end{lem}
\begin{proof} By Dini's theorem,
$$u_K=\sup\{v\in\mathscr{C}^{-}_m(\Omega)\cap\mathcal{C}(\Omega);\ v\leqslant u\ {\rm on}\ K\}.$$ 
Hence, Choquet's Lemma implies that there is an increasing sequence $(u_j)_j\subset\mathscr{C}^{-}_m(\Omega)\cap\mathcal{C}(\Omega)$ with $u_j\leqslant u,\ \forall j$ on $K$ and $\overline{u}_K=\overline{\sup_j(u_j)}$. If $B$ is a ball in $\Omega\smallsetminus K$, then by Theorem 1.1 in \cite{01} (The Dirichlet problem) we can take $w_j$ to be the unique continuous $m$-convex negative function on $B$ with $w_j=u_j$ on $\partial B\subset\Omega$ and $(dd^{\#}w_j)^m\wedge\beta^{n-m}=0$ in $B$. Next, we set 
$$\tilde{u}_j=\left\lbrace\begin{array}{lll}
u_j\quad&\mathrm{on}\quad\Omega\smallsetminus B&\\
w_j\quad&\mathrm{on}\quad B.&
\end{array}\right.$$
Then, by Proposition 2.2 in \cite{17}, $\tilde{u}_j\in\mathscr{C}^{-}_m(\Omega)$. Also, by Theorem 3.1 in \cite{19}, $\tilde{u}_j\geqslant u_j$ and the sequence $(\tilde{u}_j)_j$ is increasing. Hence, $\overline{u}_K=\overline{\sup_j(\tilde{u}_j)}$ and due to Theorem 2.4 in \cite{15}, the sequence $(dd^{\#}\tilde{u}_j)^m\wedge\beta^{n-m}$ converges to $(dd^{\#}\overline{u}_K)^m\wedge\beta^{n-m}$. Thus, since $(dd^{\#}\tilde{u}_j)^m\wedge\beta^{n-m}=0$ on $B$, then $(dd^{\#}\overline{u}_K)^m\wedge\beta^{n-m}=0$ on $B$. Furthermore, since $B$ is arbitrary on $\Omega\smallsetminus K$, then $(dd^{\#}\overline{u}_K)^m\wedge\beta^{n-m}=0$ on $\Omega\smallsetminus K$.
\end{proof}
\begin{pro}\label{PP} Let $u\in\mathscr{C}^{-}_m(\Omega)$, then $u$ is locally in $\mathcal{F}_m(\Omega)$; i.e.  for all $K\Subset\Omega$, there is a function $u_K\in\mathcal{F}_m(\Omega)$ such that $u=u_K$ on $K$. 
\end{pro}
\begin{proof} Let $u\in\mathscr{C}^{-}_m(\Omega)$. Thanks to Theorem 1.1 in \cite{31}, we can find $(u_j)_j\subset\mathcal{E}^{0}_m(\Omega)\cap\mathcal{C}(\Omega)$ such that $u_j$ decreases to $u$ on $\Omega$. Let $K\Subset\Omega$ and $$u_{j,K}=\sup\{v\in\mathscr{C}^{-}_m(\Omega);\ v\leqslant u_j\ {\rm on}\ K\},\ \forall j.$$
Then, $\overline{u}_{j,K}\in\mathcal{E}^{0}_m(\Omega)$ and $\Supp (dd^{\#}\overline{u}_{j,K})^m\wedge\beta^{n-m}\subset\overline{K},\ \forall j$. Moreover, $\overline{u}_{j,K}$ decreases to $u$ on $K$, $\overline{u}_{j,K}$ decreases on $\Omega$ and $\overline{u}_{j,K}\geqslant u$ everywhere on $\Omega$. Thus, $\lim_{j\rightarrow+\infty}\overline{u}_{j,K}=u_{K}\geqslant u$. Therefore, since $(dd^{\#}\overline{u}_{j,K})^m\wedge\beta^{n-m}$ is weakly convergent, it follows that $$\sup_{j}\int_{\Omega\times\rb^n}(dd^{\#}\overline{u}_{j,K})^m\wedge\beta^{n-m}<+\infty.$$
Hence, $u_{K}\in\mathcal{F}_m(\Omega)$ such that $u=u_{K}$ on $K$.
\end{proof}
\begin{defn} A set $E\subset\rb^n$ is said to be $m$-polar, if for each point $a\in E$ there exists a neighborhood $V_a$ of $a$ and a function $u\in\mathscr{C}_m(V_a)$ such that $u=-\infty$ on $E\cap V_a$.
\end{defn}
According to Trindinger and Wang \cite{15}, we can define the $m$-Hessian capacity of a compact subset $K\subset\Omega$ by 
$$cap_m(K,\Omega)=\sup\left\lbrace\int_{K\times\rb^n}(dd^{\#}u)^m\wedge\beta^{n-m};\ u\in\mathscr{C}_m(\Omega),\ -1\leqslant u\leqslant 0\right\rbrace.$$ 
Furthermore, for an arbitrary subset $E\subset\Omega$, we define 
$$cap_m(E,\Omega)=\inf\left\lbrace cap_m(\omega,\Omega);\ \omega\ {\rm is\ open},\ E\subset\omega\subset\Omega\right\rbrace,$$ 
where, $cap_m(\omega,\Omega)=\sup\left\lbrace cap_m(K,\Omega) ;\ K\ {\rm compact},\ K\subset\omega\right\rbrace$.
It was proved by Labutin \cite{26} that every Borel subset $E\subset\Omega$ is {\it capacitable}, that is 
$$cap_m(E,\Omega)=\sup\left\lbrace cap_m(K,\Omega) ;\ K\ {\rm compact},\ K\subset E\right\rbrace.$$
The second main tool in potential theory is the relative extremal function defined as 
$$R_m(E,\Omega)(x):=\sup\lbrace u(x);\ u\in\mathscr{C}^{-}_m(\Omega),\ u\leqslant-1\ {\rm on}\ E\rbrace,\quad x\in\Omega\ {\rm and}\ E\Subset\Omega.$$
By using the same notations as in \cite{26}, we denote by $\overline R_m(E,\Omega)$ the upper semi-continuous regularisation of $R_m(E,\Omega)$, which is negative and $m$-convex in $\Omega$. Moreover, we have $\overline R_m(E,\Omega)=R_m(E,\Omega)$ almost everywhere in $\Omega$. The following proposition is due to Labutin \cite{26} and can be seen as the corresponding result of the one obtained by Lu \cite{27} in the complex Hessian setting:
\begin{pro}\label{pop}\
\begin{enumerate} 
\item Let $R>0$ and $\Omega=\mathbb{B}(0,R)$. If $E\Subset\mathbb{B}(R)$, then $E$ is $m$-polar if and only if $cap_m(E,B_R)=0$ if and only if $\overline R_m(E,\mathbb{B}(R))=0$.
\item Let $E\Subset\rb^n$ be $m$-polar. Then, there exists $u\in\mathscr{C}_m(\rb^n)$ such that $u=-\infty$ on $E$.
\end{enumerate}
\end{pro}
 Now, strongly inspired by Nguyen \cite{25}, we introduce the following definition of weighted relative extremal function:
\begin{defn} Let $E\subset\Omega$ and $u\in\mathscr{C}^{-}_m(\Omega)$. The weighted relative extremal function associated to $E$ and $u$ is defined by $$R_{m,u}(E)=R_{m,u}(E,\Omega):=\sup\{v\in\mathscr{C}^{-}_m(\Omega);\ v\leqslant u\ \mathrm{outside\ an\ \mathit{m-}polar\ subset\ on}\ E\}.$$
\end{defn}
Denote by $\overline{R}_{m,u}(E)$ the upper semi-continuous regularization of $R_{m,u}(E)$. By using the second statement of Proposition \ref{pop}, it is not hard to see that $\overline R_{m,u}(E)=\overline R_{m,u}(E\smallsetminus F)$, for all $m$-polar $F\subset\Omega$. This means in particular that when $u$ is identically $-1$, we recover the relative extremal function as defined by Labutin \cite{26}. A function $u\in\mathscr{C}_m(\Omega)$ is called $m$-maximal if for each $v\in\mathscr{C}_m(\Omega)$ such that $v\leqslant u$ outside a compact subset of $\Omega$ implies that $v\leqslant u$ in $\Omega$. In particularly, by Theorem A.1 in \cite{17}, if $u\in\mathcal{C}(\Omega)$ then the function $u$ is $m$-maximal if and only if $(dd^{\#}u)^m\wedge\beta^{n-m}=0$. Thanks to Theorem 1.1 in \cite{31} and the proof of Theorem A.1 in \cite{17} it is not hardto seethat each $m$-maximal bounded convex function can be approximated locally by a decreasing sequence of continuous $m$-maximal function. Hence, in view of Theorem 2.4 in \cite{15}, we obtain the following corresponding of a well-known result in the complex hessian setting. 
\begin{thm}[maximality criterion]\label{mc} A function $u\in\mathscr{C}_m(\Omega)\cap L_{loc}^\infty(\Omega)$ is $m$-maximal if and only if $(dd^{\#}u)^m\wedge\beta^{n-m}=0$.
\end{thm}
\begin{pro}\label{pr3} Let $u\in\mathscr{C}^{-}_m(\Omega)$ and $E\subset\Omega$, then:
\begin{enumerate}
\item $R_{m,u}(E)\in\mathscr{C}^{-}_m(\Omega)$.
\item $R_{m,u}(E)$ is $m$-maximal on $\Omega\smallsetminus\overline{E}$.
\item $E\Subset\Omega\Rightarrow R_{m,u}(E)\in\mathcal{F}_m(\Omega).$
\item Let $(u_j)_j\subset\mathscr{C}^{-}_m(\Omega)$ be a decreasing sequence converging to $u$. Then, the sequence $(R_{m,u_j}(E))_j$ decreases to $R_{m,u}(E)$.
\end{enumerate}
\end{pro}
\begin{proof} (1) It is not hard to see that $R_{m,u}(E)=\overline{R}_{m,u}(E)$ outside an $m$-polar subset of $\Omega$. Thus, by definition of $R_{m,u}(E)$, we can see that $R_{m,u}(E)=\overline{R}_{m,u}(E)\in\mathscr{C}^{-}_m(\Omega)$.

(2) Let $v\in\mathscr{C}^{-}_m(\Omega\smallsetminus\overline{E})$ and $v\leqslant R_{m,u}(E)$ outside a compact
subset of $\Omega\smallsetminus\overline{E}$. Let us set
$$\varphi=\left\lbrace\begin{array}{lll}
\sup\{v,R_{m,u}(E)\}\quad&\mathrm{on}\quad\Omega\smallsetminus\overline{E}&\\
R_{m,u}(E)\quad&\mathrm{on}\quad \overline{E}.&
\end{array}\right.$$
Thus, by Proposition 2.2 in \cite{17}, $\varphi\in\mathscr{C}^{-}_m(\Omega)$ and $\varphi\leqslant R_{m,u}(E)$ on $\Omega$. Hence, $\varphi=R_{m,u}(E)$ on $\Omega$, which implies that $v\leqslant R_{m,u}(E)$ on $\Omega\smallsetminus\overline{E}$.

(3) Due to Proposition \ref{PP}, there exists a decreasing sequence $(u_j)_j\subset\mathcal{E}_{m}^0(\Omega)$ converging to $u$ on a neighborhood of $\overline{E}$ such that $$\sup_j\int_{\Omega\times\rb^n}(dd^{\#}u_j)^m\wedge\beta^{n-m}<+\infty.$$ Then, $(R_{m,u_j}(E))_j\subset\mathcal{E}_{m}^0(\Omega)$ is a decreasing sequence such that for all $j$, $u_j\leqslant R_{m,u_j}(E)$ on $\Omega$ and $u_j=R_{m,u_j}(E)$ on $E$ outside an $m$-polar set. Thus, $(R_{m,u_j}(E))_j$ converges to a function $\varphi\in\mathscr{C}^{-}_m(\Omega)$ such that $R_{m,u}(E)\leqslant\varphi$ on $\Omega$ and $\varphi=R_{m,u}(E)$ on $E$ outside an $m$-polar set. Hence, $\varphi=R_{m,u}(E)$ on $\Omega$. Moreover, by (\ref{eq1}), we have $$\int_{\Omega\times\rb^n}(dd^{\#}R_{m,u_j}(E))^m\wedge\beta^{n-m}\leqslant\int_{\Omega\times\rb^n}(dd^{\#}u_j)^m\wedge\beta^{n-m}.$$ Thus, the sequence $(R_{m,u_j}(E))_j$ decreases to $R_{m,u}(E)$ and $$\sup_j\int_{\Omega\times\rb^n}(dd^{\#}R_{m,u_j}(E))^m\wedge\beta^{n-m}\leqslant\sup_j\int_{\Omega\times\rb^n}(dd^{\#}u_j)^m\wedge\beta^{n-m}<+\infty,$$ which implies that $R_{m,u}(E)\in\mathcal{F}_m(\Omega)$.

(4) Assume that $(R_{m,u_j}(E))_j$ decreases to $\varphi\in\mathscr{C}^{-}_m(\Omega)$. We have $R_{m,u}(E)\leqslant R_{m,u_j}(E)$ on $\Omega$, then  $R_{m,u}(E)\leqslant\varphi$. Moreover, $u_j=R_{m,u_j}(E)$ and $u=R_{m,u}(E)$ on $E$ outside an $m$-polar set, thus $\varphi=R_{m,u}(E)$.
\end{proof}
\begin{defn} For each Borel subset $E\subset \Omega$ and $u\in\mathscr{C}^{-}_m(\Omega)$, we define the weighted $m$-Hessian capacity associated to $u$ of $E$ by $$cap_{m,u}(E)=\sup\left\lbrace\int_{K\times\rb^n}(dd^{\#}v)^m\wedge\beta^{n-m};\ K\Subset E,\ v\in\mathscr{C}_m^-(\Omega)\cap L^\infty(\Omega)\ \mathrm{and}\ u\leqslant v\leqslant0\right\rbrace.$$
\end{defn}
In particular, if $u\equiv -1$ and $E\Subset \Omega$, we recover the $m$-Hessian capacity investigated by Trindinger and Wang \cite{15}. Moreover, thanks to Proposition \ref{cln}, if $E\Subset \Omega$ then the capacity $cap_{m,u}(E)$ is finite provided that $u$ is locally bounded.
\begin{pro}\label{pr4} Assume that $u_j,u\in\mathscr{C}_m^-(\Omega)$ such that $(u_j)_j$ is monotone decreasing to $u$. Then, the sequence $(cap_{m,u_j}(E))_j$ is monotone increasing to $cap_{m,u}(E)$, for all Borel subset $E$ of $\Omega$ such that $cap_{m,u}(E)<+\infty$.
\end{pro}
\begin{proof} It is not hard to see that $(cap_{m,u_j}(E))_j$ is an increasing sequence and $cap_{m,u_j}(E)$ increases to $c\leqslant cap_{m,u}(E)$ as $j\rightarrow+\infty$. Let $v\in\mathscr{C}_m^-(\Omega)\cap L^\infty(\Omega)$ such that $u\leqslant v\leqslant0$. By Theorem 4.3 in \cite{26} and by using the subaddivity of $cap_m$, it is not difficult to realize that for each $\varepsilon>0$ there exists an open subset $U\subset\Omega$ such that $cap_m(U,\Omega)<\varepsilon$ and $u_j,u$ are continuous on $\Omega\smallsetminus U$, for each $j$. Let us take an open subset $\Omega_0$ such that $E\subset\Omega_0\Subset\Omega$. So, according to Dini's theorem, $(u_j)_j$ uniformly converges to $u$ on $\Omega_0\smallsetminus U$. Thus, we can choose $j_0$ such that $u_{j_0}<(1-\varepsilon)u\leqslant(1-\varepsilon)v$ on $\Omega_0\smallsetminus U$. Then,
$$\begin{array}{lcl}
\ds c\geqslant cap_{m,u_{j_0}}(E)&\geqslant&\ds\int_{E\times\rb^n}(dd^{\#}\max((1-\varepsilon)v,u_{j_0}))^m\wedge\beta^{n-m}\\&\geqslant&\ds\int_{E\smallsetminus U\times\rb^n}(dd^{\#}\max((1-\varepsilon)v,u_{j_0}))^m\wedge\beta^{n-m}\\&=&\ds(1-\varepsilon)^m\int_{E\smallsetminus U\times\rb^n}(dd^{\#}v)^m\wedge\beta^{n-m}\\&\geqslant&\ds(1-\varepsilon)^m\left(\int_{E\times\rb^n}(dd^{\#}v)^m\wedge\beta^{n-m}-\int_{U\times\rb^n}(dd^{\#}v)^m\wedge\beta^{n-m}\right)\\&\geqslant&\ds(1-\varepsilon)^m\left(\int_{E\times\rb^n}(dd^{\#}v)^m\wedge\beta^{n-m}-\left(\sup_U|v|\right)^m cap_m(U,\Omega)\right)\\&\geqslant&\ds(1-\varepsilon)^m\int_{E\times\rb^n}(dd^{\#}v)^m\wedge\beta^{n-m}-(1-\varepsilon)^m\varepsilon\left(\sup_U|v|\right)^m.
\end{array}$$
Hence, by letting $\varepsilon\rightarrow0$, we obtain $$c\geqslant\int_{E\times\rb^n}(dd^{\#}v)^m\wedge\beta^{n-m}.$$ Thus, $c\geqslant cap_{m,u}(E)$, which leads to the conclusion that $c=cap_{m,u}(E)$.
\end{proof}
\begin{lem}\label{ll} Let $u,v\in\mathscr{C}_m^-(\Omega)\cap L_{loc}^\infty(\Omega)$, then
$$(dd^{\#}\max(u,v))^m\wedge\beta^{n-m}\geqslant{\1}_{\lbrace u\geqslant v\rbrace}(dd^{\#}u)^m\wedge\beta^{n-m}+{\1}_{\lbrace v>u\rbrace}(dd^{\#}v)^m\wedge\beta^{n-m}.$$
\end{lem}
\begin{proof} For all compact $K\subset\lbrace u\geqslant v\rbrace$, we have
$$\begin{array}{lcl}
\ds\int_{K\times\rb^n}(dd^{\#}\max(u,v))^m\wedge\beta^{n-m}&\geqslant&\ds\limsup_{\varepsilon\rightarrow0}\ds\int_{K\times\rb^n}(dd^{\#}\max(u+\varepsilon,v))^m\wedge\beta^{n-m}\\&=&\ds\limsup_{\varepsilon\rightarrow0}\ds\int_{K\times\rb^n}{\1}_{\lbrace u+\varepsilon>v\rbrace}(dd^{\#}\max(u+\varepsilon,v))^m\wedge\beta^{n-m}\\&=&\ds\limsup_{\varepsilon\rightarrow0}\ds\int_{K\times\rb^n}{\1}_{\lbrace u+\varepsilon>v\rbrace}(dd^{\#}u)^m\wedge\beta^{n-m}\\&=&\ds\int_{K\times\rb^n}(dd^{\#}u)^m\wedge\beta^{n-m}.
\end{array}$$
The other term is then obtained by
reversing the roles of $u$ and $v$.
\end{proof}
\begin{thm}[Integration by parts]\label{th1} Let the functions $v,u_1,...,u_m\in\mathscr{C}_m^-(\Omega)$ and denote by $T=dd^{\#}u_2\wedge...\wedge dd^{\#}u_m\wedge\beta^{n-m}$, then:
\begin{enumerate}
\item If $v,u_1\in L^{\infty}_{loc}(\Omega)$ and $u=v$ outside a compact subset of $\Omega$, then 
$$\int_{\Omega\times\rb^n}vdd^{\#}u_1\wedge T=\int_{\Omega\times\rb^n}u_1dd^{\#}v\wedge T.$$
\item If  $v,u_1\in L^{\infty}_{loc}(\Omega)$, $v$ is an exhaustion function for $\Omega$ and $\ds\int_{\Omega\times\rb^n}dd^{\#}v\wedge T<+\infty$, then $$\int_{\Omega\times\rb^n}vdd^{\#}u_1\wedge T\geqslant\int_{\Omega\times\rb^n}u_1dd^{\#}v\wedge T.$$
\item If $v\not\equiv0$, $\lim_{x\rightarrow y}v(x)=0,\ \forall y\in\partial\Omega$ and $\ds\int_{\Omega\times\rb^n}u_1dd^{\#}v\wedge T>-\infty$, then $$\int_{\Omega\times\rb^n}vdd^{\#}u_1\wedge T\geqslant\int_{\Omega\times\rb^n}u_1dd^{\#}v\wedge T,$$ and the equality will holds if we also assume that $\lim_{x\rightarrow y}u_1(x)=0,\ \forall y\in\partial\Omega$.
\end{enumerate}
\end{thm}
Theorem \ref{th1} is the corresponding of Theorems 3.1 and 3.3 in \cite{02} and Theorem 3.2 in \cite{28}.
\begin{proof}
(1) We assume first that $v,u_1,...,u_m$ are defined in a neighborhood of $\partial\Omega$, $v$ and $u_1$ are smooth and $v=u_1$ in a neighborhood
of $\partial\Omega$. Let $K$ be a compact subset of $\Omega$ such that $\{x\in\Omega;\ u_1(x)\neq v(x)\}\Subset\mathring{K}$. Let $\chi$ be a smooth and compactly supported function such that $0\leqslant\chi\leqslant1$ on $\Omega$ and $\chi\equiv1$ on $K$. Then
$$\begin{array}{lcl}
\ds\int_{\Omega\times\rb^n}\chi vdd^{\#}u_1\wedge T=\langle dd^{\#}u_1T,\chi v\rangle&=&\langle u_1T,dd^{\#}(\chi v)\rangle\\&=&\langle u_1T,dd^{\#}(\chi v)-\chi dd^{\#}v\rangle+\langle u_1T,\chi dd^{\#}v\rangle.
\end{array}$$
Note that $dd^{\#}(\chi v)-\chi dd^{\#}v=vdd^{\#}\chi+d\chi\wedge d^{\#}v-d^{\#}\chi\wedge dv$ is a smooth form whose support is contained in an open set of $\{u_1=v\}$, thus
$$\begin{array}{lcl}
\langle u_1T,dd^{\#}(\chi v)-\chi dd^{\#}v\rangle&=&\langle vT,dd^{\#}(\chi v)\rangle-\langle vT,\chi dd^{\#}v\rangle\\&=&\langle dd^{\#}v\wedge T,\chi v\rangle-\langle vT,\chi dd^{\#}v\rangle=0.
\end{array}$$
Since $v$ is smooth, we obtain
$$\int_{\Omega\times\rb^n}\chi vdd^{\#}u_1\wedge T=\int_{\Omega\times\rb^n}\chi u_1dd^{\#}v\wedge T,$$ and our result will follows by extending the support of $\chi$ to be $\Omega$. Let us consider now the general case. Let $K$ be a compact subset of $\Omega$ such that $u_1 = v$ in $\Omega\smallsetminus K$ and choose $\lambda>0$ such that $K_{2\lambda}=\{x;\ {\rm dist}(x,K)\leqslant2\lambda\}$ is contained in $\Omega$. For $\varepsilon>0$, let $u_{1,\varepsilon}$ and $v_\varepsilon$ are smooth regularizations of $u$ and $v$ respectively. Then, $u_{1,\varepsilon}=v_\varepsilon$ outside $K_\lambda$ if
$\varepsilon<\lambda$. Finally, we choose $\delta_0>0$ small enough such that that $K_{2\lambda}\subset\Omega_{\delta_0}=\{x\in\Omega;\ {\rm dist}(x,\partial\Omega)>\delta_0\}$. For $\delta<\delta_0$, we have $\max\{v(x);\ x\in\overline{\Omega}_\delta\}<0$ and since $v_\varepsilon$ is smooth and
decreases to $v$ as $\varepsilon\rightarrow0$ it follows that $v_\varepsilon<0$ on $\overline{\Omega}_\delta$, if $\varepsilon$ is small enough. Let $\chi$ be a smooth and compactly supported function such that $0\leqslant\chi\leqslant1$ on $\Omega$ and $\chi\equiv1$ on $K_{2\lambda}$. Then, by the first step, we have 
$$\int_{\Omega_\delta\times\rb^n}\chi v_\varepsilon dd^{\#}u_{1,\varepsilon}\wedge T=\int_{\Omega_\delta\times\rb^n}\chi u_{1,\varepsilon}dd^{\#}v_\varepsilon\wedge T,$$
which implies, by letting $\varepsilon\rightarrow0$ and using the weak convergence of $v_\varepsilon dd^{\#}u_{1,\varepsilon}\wedge T$ and $u_{1,\varepsilon}dd^{\#}v_\varepsilon\wedge T$ (see Theorem 2.6 in \cite{15}), that
$$\int_{\Omega_\delta\times\rb^n}\chi vdd^{\#}u_1\wedge T=\int_{\Omega_\delta\times\rb^n}\chi u_1dd^{\#}v\wedge T.$$
Hence, 
$$\int_{\Omega_\delta\times\rb^n}vdd^{\#}u_1\wedge T=\int_{\Omega_\delta\times\rb^n}u_1dd^{\#}v\wedge T,$$
and we conclude by sending $\delta$ to $0$.

(2) Let $\varepsilon>0$, $k\in\nb^{\ast}$ and $u_{1,k}=\max(u_1-\varepsilon,kv)$. Then, $u_{1,k}\in\mathscr{C}_m^-(\Omega)$ and since $v$ is an exhaustion function for $\Omega$, we have $u_{1,k}=kv$ near $\partial\Omega$. Thus, by (1), we get 
$$\int_{\Omega\times\rb^n}vdd^{\#}u_{1,k}\wedge T=\int_{\Omega\times\rb^n}u_{1,k}dd^{\#}v\wedge T.$$
In $\Omega$, $u_{1,k}$ decreases to $u_1-\varepsilon$ as $k\rightarrow+\infty$, and by  the monotone convergence theorem, we have 
$$\int_{\Omega\times\rb^n}(u_1-\varepsilon)dd^{\#}v\wedge T=\lim_{k\rightarrow+\infty}\int_{\Omega\times\rb^n}vdd^{\#}u_{1,k}\wedge T.$$
Since $vdd^{\#}u_{1,k}\wedge T$ converges to $vdd^{\#}u_{1}\wedge T$ as $k\rightarrow+\infty$, it follows that
$$\lim_{k\rightarrow+\infty}\int_{\Omega\times\rb^n}\chi vdd^{\#}u_{1,k}\wedge T=\int_{\Omega\times\rb^n}\chi vdd^{\#}u_{1}\wedge T,$$
for every smooth and compactly supported function $\chi$ on $\Omega$ such that $0\leqslant\chi\leqslant1$ (see Theorem 2.6 in \cite{15}). Note also that
$$\int_{\Omega\times\rb^n}\chi vdd^{\#}u_{1,k}\wedge T\geqslant\int_{\Omega\times\rb^n} vdd^{\#}u_{1,k}\wedge T.$$
Hence, we conclude that 
$$\int_{\Omega\times\rb^n}(u_1-\varepsilon)dd^{\#}v\wedge T\leqslant\int_{\Omega\times\rb^n}\chi vdd^{\#}u_{1}\wedge T.$$
Thus, 
$$\int_{\Omega\times\rb^n}(u_1-\varepsilon)dd^{\#}v\wedge T\leqslant\int_{\Omega\times\rb^n}vdd^{\#}u_{1}\wedge T,$$
and since the positive measure $dd^{\#}v\wedge T$ is finite on $\Omega$, then the desired result will follows by letting $\varepsilon\rightarrow0$.

(3) Assume that $v,u_1\in\mathcal{C}(\overline{\Omega})$ and $v=u_1=0$ on $\partial\Omega$. Then, by (2), we have $$-\infty<\int_{\Omega\times\rb^n}u_1dd^{\#}v\wedge T\leqslant\int_{\{v<-\varepsilon\}\times\rb^n}u_1dd^{\#}v\wedge T\leqslant\int_{\{v<-\varepsilon\}\times\rb^n}(v+\varepsilon)dd^{\#}u_1\wedge T.$$
Thus, if we denote by ${\1}_\varepsilon$ the characteristic function of $\{v<-\varepsilon\}$, then $(v+\varepsilon){\1}_\varepsilon$ decreases to $v$ when $\varepsilon\rightarrow0$. Hence, 
$$\int_{\Omega\times\rb^n}u_1dd^{\#}v\wedge T\leqslant\int_{\Omega\times\rb^n}vdd^{\#}u_1\wedge T.$$
In the general case, we use Theorem 1.1 in \cite{31} and choose $v_j,u_{1,j}\in\mathcal{E}^{0}_m(\Omega)\cap\mathcal{C}(\overline{\Omega})$ such that $v_j$ and $u_{1,j}$ decrease respectively to $v$ and $u_1$ as $j\rightarrow+\infty$. Then, by Theorem 2.6 in \cite{15}, we have $$\begin{array}{lcl}
\ds\int_{\Omega\times\rb^n}u_{1}dd^{\#}v\wedge T&\leqslant&\ds\int_{\Omega\times\rb^n}u_{1,k}dd^{\#}v\wedge T\\&\leqslant&\ds\lim_{j\rightarrow+\infty}\int_{\Omega\times\rb^n}u_{1,k}dd^{\#}v_j\wedge T\\&=&\ds\lim_{j\rightarrow+\infty}\int_{\Omega\times\rb^n}v_jdd^{\#}u_{1,k}\wedge T=\int_{\Omega\times\rb^n}vdd^{\#}u_{1,k}\wedge T.
\end{array}$$ 
Since $v$ is negative and upper semi-continuous and $dd^{\#}u_{1,k}\wedge T$ is weakly convergent to $dd^{\#}u_{1}\wedge T$, then for all $\varepsilon>0$, we get 
$$\begin{array}{lcl}
\ds\int_{\Omega\times\rb^n}u_{1}dd^{\#}v\wedge T&\leqslant&\ds\lim_{k\rightarrow+\infty}\int_{\Omega\times\rb^n}vdd^{\#}u_{1,k}\wedge T\\&\leqslant&\ds\lim_{k\rightarrow+\infty}\int_{\{v<-\varepsilon\}\times\rb^n}vdd^{\#}u_{1,k}\wedge T\\&\leqslant&\ds\int_{\{v<-\varepsilon\}\times\rb^n}vdd^{\#}u_{1}\wedge T.
\end{array}$$ 
Hence, by letting $\varepsilon\rightarrow0$, we have 
$$\int_{\Omega\times\rb^n}u_{1}dd^{\#}v\wedge T\leqslant\int_{\Omega\times\rb^n}vdd^{\#}u_1\wedge T,$$
which concludes our proof. 
\end{proof}
\begin{thm}\label{l2} Let $u_1,...,u_m\in\mathcal{F}_m(\Omega)$ and $u^{j}_1,...,u^{j}_m$ are sequences of functions in $\mathcal{E}^{0}_m(\Omega)$ which decrease respectively to $u_1,...,u_m$. Then, for all $v\in\mathscr{C}_m^-(\Omega)$, we have
$$\lim_{j\rightarrow+\infty}\int_{\Omega\times\rb^n}v dd^{\#}u^{j}_1\wedge...\wedge dd^{\#}u^{j}_m\wedge\beta^{n-m}=\int_{\Omega\times\rb^n}v dd^{\#}u_1\wedge...\wedge dd^{\#}u_m\wedge\beta^{n-m}.$$
\end{thm}
Theorem \ref{l2} is the corresponding of Proposition 5.1 in \cite{28}.
\begin{proof} Assume first that 
\begin{equation}\label{E}
\sup_{k,j}\int_{\Omega\times\rb^n}(dd^{\#}u_{k}^{j})^m\wedge\beta^{n-m}<+\infty.
\end{equation} Then, according to Corollary 1.1 in \cite{31}, it is clear that
$$\sup_{j}\int_{\Omega\times\rb^n}dd^{\#}u^{j}_1\wedge...\wedge dd^{\#}u^{j}_m\wedge\beta^{n-m}<+\infty.$$
Thanks to Theorem 2.4 in \cite{15}, the sequence $dd^{\#}u^{j}_1\wedge...\wedge dd^{\#}u^{j}_m\wedge\beta^{n-m}$ is weakly convergent to $dd^{\#}u_1\wedge...\wedge dd^{\#}u_m\wedge\beta^{n-m}$. Hence, we have
$$+\infty>\lim_{j\rightarrow+\infty}\int_{\Omega\times\rb^n}dd^{\#}u^{j}_1\wedge...\wedge dd^{\#}u^{j}_m\wedge\beta^{n-m}\geqslant\int_{\Omega\times\rb^n}dd^{\#}u_1\wedge...\wedge dd^{\#}u_m\wedge\beta^{n-m}.$$
If $v\in\mathcal{E}^{0}_m(\Omega)\cap\mathcal{C}(\overline\Omega)$, then by Theorem \ref{th1}, we have that $$\ds\int_{\Omega\times\rb^n}vdd^{\#}u_{1}^{j}\wedge...\wedge dd^{\#}u_{m}^{j}\wedge\beta^{n-m}\ {\rm decreases.}$$ Moreover, we have
$$\int_{\Omega\times\rb^n}vdd^{\#}u_{1}^{j}\wedge...\wedge dd^{\#}u_{m}^{j}\wedge\beta^{n-m}\geqslant(\inf_{\Omega}v)\sup_j\int_{\Omega\times\rb^n}dd^{\#}u_{1}^{j}\wedge...\wedge dd^{\#}u_{m}^{j}\wedge\beta^{n-m}>-\infty.$$
Consequently, $$\lim_{j\rightarrow+\infty}\int_{\Omega\times\rb^n}vdd^{\#}u_{1}^{j}\wedge...\wedge dd^{\#}u_{m}^{j}\wedge\beta^{n-m}\ {\rm exists.}$$  Now, if $(w_{1}^{j})_j,...,(w_{m}^{j})_j$ are another sequences which decrease respectively to $u_1,...,u_m$, then by using again Theorem \ref{th1}, we get
$$\begin{array}{lcl}
& &\ds\int_{\Omega\times\rb^n}vdd^{\#}w_{1}^{j}\wedge dd^{\#}w_{2}^{j}\wedge...\wedge dd^{\#}w_{m}^{j}\wedge\beta^{n-m}\\&=&\ds\int_{\Omega\times\rb^n}w_{1}^{j}dd^{\#}v\wedge dd^{\#}w_{2}^{j}\wedge...\wedge dd^{\#}w_{m}^{j}\wedge\beta^{n-m}\\&\geqslant&\ds\int_{\Omega\times\rb^n}u_1dd^{\#}v\wedge dd^{\#}w_{2}^{j}\wedge...\wedge dd^{\#}w_{m}^{j}\wedge\beta^{n-m}\\&=&\ds\lim_{j_1\rightarrow+\infty}\int_{\Omega\times\rb^n}u_{1}^{j_1}dd^{\#}v\wedge dd^{\#}w_{2}^{j}\wedge...\wedge dd^{\#}w_{m}^{j}\wedge\beta^{n-m}\\&=&\ds\lim_{j_1\rightarrow+\infty}\int_{\Omega\times\rb^n}w_{2}^{j}dd^{\#}v\wedge dd^{\#}u_{1}^{j_1}\wedge dd^{\#}w_{3}^{j}\wedge...\wedge dd^{\#}w_{m}^{j}\wedge\beta^{n-m}\\&\geqslant&...\geqslant\ds\lim_{j_1\rightarrow+\infty}...\lim_{j_m\rightarrow+\infty}\int_{\Omega\times\rb^n}vdd^{\#}u_{1}^{j_1}\wedge...\wedge dd^{\#}u_{m}^{j_m}\wedge\beta^{n-m}\\&\geqslant&\ds\lim_{j\rightarrow+\infty}\int_{\Omega\times\rb^n}vdd^{\#}u_{1}^{j}\wedge...\wedge dd^{\#}u_{m}^{j}\wedge\beta^{n-m}.
\end{array}$$
Thus, $\lim_{j\rightarrow+\infty}\int_{\Omega\times\rb^n}vdd^{\#}w_{1}^{j}\wedge...\wedge dd^{\#}w_{m}^{j}\wedge\beta^{n-m}$ exists and minorized by $$\lim_{j\rightarrow+\infty}\int_{\Omega\times\rb^n}vdd^{\#}u_{1}^{j}\wedge...\wedge dd^{\#}u_{m}^{j}\wedge\beta^{n-m}.$$
But this is a symmetric situation, then we conclude that the limits are equal, and then
$$\lim_{j\rightarrow+\infty}\int_{\Omega\times\rb^n}v dd^{\#}u^{j}_1\wedge...\wedge dd^{\#}u^{j}_m\wedge\beta^{n-m}=\int_{\Omega\times\rb^n}v dd^{\#}u_1\wedge...\wedge dd^{\#}u_m\wedge\beta^{n-m}.$$
Let us now remove the restriction (\ref{E}). For this aim, we consider $(h_{1}^{j})_j,...,(h_{m}^{j})_j$ are sequences in $\mathcal{E}^{0}_m(\Omega)$ which decrease respectively to $u_1,...,u_m$ and such that $$\sup_{k,j}\int_{\Omega\times\rb^n}(dd^{\#}h_{k}^{j})^m\wedge\beta^{n-m}<+\infty.$$ 
By setting $g_k^j=\max(h_k^j,u_k^j)$ and using the implication $(5.1)$, we get $$\sup_{k,j}\int_{\Omega\times\rb^n}(dd^{\#}g_{k}^{j})^m\wedge\beta^{n-m}<+\infty.$$
Therefore, by the above arguments of integration by parts, it is not hard to see that 
$$\begin{array}{lcl}\ds\lim_{j\rightarrow+\infty}\int_{\Omega\times\rb^n}v dd^{\#}u^{j}_1\wedge...\wedge dd^{\#}u^{j}_m\wedge\beta^{n-m}&\leqslant& \ds\lim_{j\rightarrow +\infty}\int_{\Omega\times\rb^n}v dd^{\#}g^{j}_1\wedge...\wedge dd^{\#}g^{j}_m\wedge\beta^{n-m}\\&=&\ds\int_{\Omega\times\rb^n}v dd^{\#}u_1\wedge...\wedge dd^{\#}u_m\wedge\beta^{n-m}\\&\leqslant&\ds\lim_{j\rightarrow+\infty}\int_{\Omega\times\rb^n}v dd^{\#}u^{j}_1\wedge...\wedge dd^{\#}u^{j}_m\wedge\beta^{n-m}.\end{array}$$
The last inequality because the sequence $v dd^{\#}u^{j}_1\wedge...\wedge dd^{\#}u^{j}_m\wedge\beta^{n-m}$ converge weakly to $v dd^{\#}u_1\wedge...\wedge dd^{\#}u_m\wedge\beta^{n-m}$. Thus, $$\lim_{j\rightarrow+\infty}\int_{\Omega\times\rb^n}v dd^{\#}u^{j}_1\wedge...\wedge dd^{\#}u^{j}_m\wedge\beta^{n-m}=\int_{\Omega\times\rb^n}v dd^{\#}u_1\wedge...\wedge dd^{\#}u_m\wedge\beta^{n-m}.$$
Assume now that $v\in\mathscr{C}_m^-(\Omega)$ and $$-\int_{\Omega\times\rb^n}v dd^{\#}u_1\wedge...\wedge dd^{\#}u_m\wedge\beta^{n-m}<+\infty.$$
For each $j$, we choose $v_j\in\mathcal{E}^{0}_m(\Omega)\cap\mathcal{C}(\Omega)$ decreasing to $v$, $q_j$ and $s_j$ such that
$$\begin{array}{lcl}
\ds-\int_{\Omega\times\rb^n}v dd^{\#}u_1\wedge...\wedge dd^{\#}u_m\wedge\beta^{n-m}&\leqslant&\ds\frac{1}{j}-\int_{\Omega\times\rb^n}v_j dd^{\#}u_1\wedge...\wedge dd^{\#}u_m\wedge\beta^{n-m}\\&\leqslant&\ds\frac{2}{j}-\int_{\Omega\times\rb^n}v_j dd^{\#}u^{q_j}_1\wedge...\wedge dd^{\#}u^{q_j}_m\wedge\beta^{n-m}\\&\leqslant&\ds\frac{2}{j}-\int_{\Omega\times\rb^n}v dd^{\#}u^{q_j}_1\wedge...\wedge dd^{\#}u^{q_j}_m\wedge\beta^{n-m}\\&\leqslant&\ds\frac{4}{j}-\int_{\Omega\times\rb^n}v_{s_j} dd^{\#}u^{q_j}_1\wedge...\wedge dd^{\#}u^{q_j}_m\wedge\beta^{n-m}\\&\leqslant&\ds\frac{4}{j}-\int_{\Omega\times\rb^n}v_{s_j} dd^{\#}u_1\wedge...\wedge dd^{\#}u_m\wedge\beta^{n-m}\\&\leqslant&\ds\frac{4}{j}-\int_{\Omega\times\rb^n}v dd^{\#}u_1\wedge...\wedge dd^{\#}u_m\wedge\beta^{n-m}.
\end{array}$$
Letting $j\rightarrow+\infty$, we get
$$\lim_{j\rightarrow+\infty}\int_{\Omega\times\rb^n}v dd^{\#}u^{j}_1\wedge...\wedge dd^{\#}u^{j}_m\wedge\beta^{n-m}=\int_{\Omega\times\rb^n}v dd^{\#}u_1\wedge...\wedge dd^{\#}u_m\wedge\beta^{n-m}.$$
Also, if
$\int_{\Omega\times\rb^n}v dd^{\#}u_1\wedge...\wedge dd^{\#}u_m\wedge\beta^{n-m}=-\infty,$
then
$$\lim_{j\rightarrow+\infty}\int_{\Omega\times\rb^n}v dd^{\#}u^{j}_1\wedge...\wedge dd^{\#}u^{j}_m\wedge\beta^{n-m}=-\infty,$$
and our proof is completed.
\end{proof}
Finally, we state our main result in this section, which is the corresponding of Theorem 4.6 in \cite{25} in the setting of $m$-convex functions:
\begin{thm D} Let $u\in\mathscr{C}^{-}_m(\Omega)$, then
$$cap_{m,u}(E)=\int_{E\times\rb^n}(dd^{\#}R_{m,u}(E))^m\wedge\beta^{n-m},$$
for all Borel compact subset $E$ of $\Omega$.
\end{thm D}
We must highlight an important point here, which is that Theorem D is a generalization of Lemma 4.6 in \cite{26}.
\begin{proof} First, assume that $u\in\mathcal{E}^{0}_m(\Omega)$. Since $u\leqslant R_{m,u}(E)\leqslant0$ and in view of implication (\ref{eq1}), we have $R_{m,u}(E)\in\mathcal{E}^{0}_m(\Omega)$. It follows by the definition of $cap_{m,u}$ that $$cap_{m,u}(E)\geqslant\int_{E\times\rb^n}(dd^{\#}R_{m,u}(E))^m\wedge\beta^{n-m}.$$ Conversely, let $v\in\mathscr{C}_m^-(\Omega)\cap L^\infty(\Omega)$ such that $u\leqslant v\leqslant0$. Since  $u\in\mathcal{E}^{0}_m(\Omega)$, then $v\in\mathcal{E}^{0}_m(\Omega)$. Moreover, $u=R_{m,u}(E)$ outside an $m$-polar subset on $E$, so that $R_{m,u}(E)\leqslant v$ outside an $m$-polar subset on $E$. Since functions from $\mathcal{E}_{m}^0(\Omega)$ put no mass on $m$-polar sets, then it follows from Lemma \ref{ll}, the implication (\ref{eq1}) and Theorem \ref{mc} that
$$\begin{array}{lcl}
\ds\int_{E\times\rb^n}(dd^{\#}v)^m\wedge\beta^{n-m}&\leqslant&\ds\int_{E\times\rb^n}(dd^{\#}\max(R_{m,u}(E),v))^m\wedge\beta^{n-m}\\&\leqslant&\ds\int_{\Omega\times\rb^n}(dd^{\#}\max(R_{m,u}(E),v))^m\wedge\beta^{n-m}\\&\leqslant&\ds\int_{\Omega\times\rb^n}(dd^{\#}R_{m,u}(E))^m\wedge\beta^{n-m}\\&=&\ds\int_{E\times\rb^n}(dd^{\#}R_{m,u}(E))^m\wedge\beta^{n-m}.
\end{array}$$
Then, $$cap_{m,u}(E)=\int_{E\times\rb^n}(dd^{\#}R_{m,u}(E))^m\wedge\beta^{n-m},\qquad\forall u\in\mathcal{E}^{0}_m(\Omega).$$
In the general case, if $u\in\mathscr{C}^{-}_m(\Omega)$ there exists a decreasing sequence $(u_j)_j\subset\mathcal{E}_{m}^0(\Omega)$ such that $u_j$ decreases to $u$ as $j\rightarrow+\infty$. By the first step, we have
\begin{equation}\label{eq2}
cap_{m,u_j}(E)=\int_{E\times\rb^n}(dd^{\#}R_{m,u_j}(E))^m\wedge\beta^{n-m},\qquad\forall j.
\end{equation}
On the one hand, Proposition \ref{pr3} (4) implies that $R_{m,u_j}(E)$ decreases to $R_{m,u}(E)$ and thanks to Theorem \ref{l2}, we have
$$\begin{array}{lcl}
\ds\lim_{j\rightarrow+\infty}\int_{E\times\rb^n}(dd^{\#}R_{m,u_j}(E))^m\wedge\beta^{n-m}&=&\ds\lim_{j\rightarrow+\infty}\int_{\Omega\times\rb^n}(dd^{\#}R_{m,u_j}(E))^m\wedge\beta^{n-m}\\&=&\ds\int_{\Omega\times\rb^n}(dd^{\#}R_{m,u}(E))^m\wedge\beta^{n-m}\\&=&\ds\int_{E\times\rb^n}(dd^{\#}R_{m,u}(E))^m\wedge\beta^{n-m}<+\infty.
\end{array}$$
On the other hand, Proposition \ref{pr4} implies that $cap_{m,u_j}(E)$ increases to $cap_{m,u}(E)$. Thus, we obtain the desired result by letting $j\rightarrow+\infty$ in (\ref{eq2}).
\end{proof}
\begin{cor} Let  $u\in\mathscr{C}^{-}_m(\Omega)$, then:
\begin{enumerate}
\item If $E$ is a Borel compact subset of $\Omega$, then $cap_{m,u}(E)<+\infty$.
\item If $u\in\mathcal{F}_m(\Omega)$, then
$$\int_{\Omega\times\rb^n}(dd^{\#}R_{m,u}(E))^m\wedge\beta^{n-m}\leqslant cap_{m,u}(E)\leqslant\int_{\Omega\times\rb^n}(dd^{\#}u)^m\wedge\beta^{n-m},$$
for all open subset $E$ of $\Omega$. In particularly, if $E=\Omega$, then
$$cap_{m,u}(\Omega)=\int_{\Omega\times\rb^n}(dd^{\#}u)^m\wedge\beta^{n-m}.$$
\end{enumerate}
\end{cor}
\begin{proof} (1) Assume that $E$ is a Borel compact subset of $\Omega$. In spite of Proposition \ref{pr3}, we have $R_{m,u}(E)\in\mathcal{F}_m(\Omega)$. Thus, Theorem D and the second statement of Proposition \ref{pr11} imply that $$cap_{m,u}(E)=\int_{E\times\rb^n}(dd^{\#}R_{m,u}(E))^m\wedge\beta^{n-m}<+\infty.$$
(2) Assume that $u\in\mathcal{E}_{m}^0(\Omega)$ and let $v\in\mathscr{C}_m(\Omega)$ such that $u\leqslant v\leqslant0$. In view of Proposition \ref{pr11} we have $\max(u,v)=v\in\mathcal{E}_m^0(\Omega)$, and therefore by (\ref{eq1}) we obtain $$\int_{K\times\rb^n}(dd^{\#}v)^m\wedge\beta^{n-m}\leqslant\int_{\Omega\times\rb^n}(dd^{\#}v)^m\wedge\beta^{n-m}\leqslant\int_{\Omega\times\rb^n}(dd^{\#}u)^m\wedge\beta^{n-m},$$
for every compact $K\subset E$. By taking the supremum over all $v$ and over all $K$, we get
$$cap_{m,u}(E)\leqslant\int_{\Omega\times\rb^n}(dd^{\#}u)^m\wedge\beta^{n-m}.$$
 In order to prove the other inequality, let $(K_j)_j$ be an exhaustive sequence of compact subsets of $E$ ($K_j\subset K_{j+1}$ and $\cup_j K_j=E$). We claim that $R_{m,u}(K_j)$ decreases to $R_{m,u}(E)$. Indeed, it is clear that $R_{m,u}(K_j)$ decreases to $w\geqslant R_{m,u}(E)$, for some $w\in\mathscr{C}^{-}_m(\Omega)$. To show that $R_{m,u}(E)\leqslant w$, we see by definition of the weighted capacity that for each $j$, there exists an $m$-polar subset $F_j\subset K_j$ such that $R_{m,u}(K_j)=u$ on $K_j\smallsetminus F_j$. Hence, for $F=\cup_jF_j$, it is clear that $w=u$ on $E\smallsetminus F$ and therefore $w\leqslant R_{m,u}(E)$. Thus, by Theorem \ref{l2} and Theorem D, we get
$$\begin{array}{lcl}
\ds\int_{\Omega\times\rb^n}(dd^{\#}R_{m,u}(E))^m\wedge\beta^{n-m}&=&\ds\lim_{j\l +\infty}\int_{\Omega\times\rb^n}(dd^{\#}R_{m,u}(K_j))^m\wedge\beta^{n-m}\\&=&\ds\lim_{j\l +\infty}cap_{m,u}(K_j)\leqslant cap_{m,u}(E).
\end{array}$$
Now, consider $(u_j)_j\subset\mathcal{E}_{m}^0(\Omega)$ such that $u_j$ decreases to $u$ and $$\sup_j\int_{\Omega\times\rb^n}(dd^{\#}u_j)^m\wedge\beta^{n-m}<+\infty.$$ By the preceding argument, for each $j$, we have 
$$\int_{\Omega\times\rb^n}(dd^{\#}R_{m,u_j}(E))^m\wedge\beta^{n-m}\leqslant cap_{m,u_j}(E)\leqslant\int_{\Omega\times\rb^n}(dd^{\#}u_j)^m\wedge\beta^{n-m}.$$
In order to complete the proof, it suffices to apply Proposition 11 and Theorem 7, by taking into account the fact that $R_{m,u_j}(E)\in\mathcal{E}_m^0(\Omega)$ decreases to $R_{m,u}(E)\in\mathcal{F}_m(\Omega)$. 
\end{proof}
\section*{Acknowledgment}
The second named author likes to express his gratitude towards Professor Jean-Pierre Demailly for his hospitality and for many stimulating discussions and comments during his visit to "Institut Fourier".

\end{document}